\documentclass[11pt]{preprint}
\usepackage[full]{textcomp}
\usepackage[osf]{newtxtext}
\usepackage{comment}

\usepackage{amssymb}
\usepackage{mathtools}
\usepackage{hyperref}
\usepackage{breakurl}
\usepackage{mhenvs}
\usepackage{mhequ}
\usepackage{mhsymb}
\usepackage{booktabs}
\usepackage{tikz}
\usepackage{mathrsfs}
\usepackage{longtable}
\usepackage{graphicx}
\usepackage{subfig}
\usetikzlibrary{plotmarks}
\usetikzlibrary{decorations.pathreplacing}
\usetikzlibrary{decorations.pathmorphing}
\usetikzlibrary{fit}

\usepackage{float}
\usepackage{microtype}
\usepackage{comment}
\usepackage{wasysym}
\usepackage{centernot}
\usepackage{enumitem}
\usepackage{bm}
\usepackage{stackrel}

\newcommand{\eqlaw}{\stackrel{\mbox{\tiny law}}{=}}

\DeclareMathAlphabet{\mathbbm}{U}{bbm}{m}{n}

\overfullrule=3mm
\marginparwidth=3.3cm

\DeclareFontFamily{U}{BOONDOX-calo}{\skewchar\font=45 }
\DeclareFontShape{U}{BOONDOX-calo}{m}{n}{
  <-> s*[1.05] BOONDOX-r-calo}{}
\DeclareFontShape{U}{BOONDOX-calo}{b}{n}{
  <-> s*[1.05] BOONDOX-b-calo}{}
\DeclareMathAlphabet{\mcb}{U}{BOONDOX-calo}{m}{n}
\SetMathAlphabet{\mcb}{bold}{U}{BOONDOX-calo}{b}{n}

\setlist{noitemsep,topsep=4pt}



\newcommand{\mcA}{\mathcal{A}}

\newcommand{\mcR}{\mathcal{R}}
\newcommand{\mcC}{\mathcal{C}}
\newcommand{\mcB}{\mathcal{B}}

\newcommand{\mcL}{\mathcal{L}}
\newcommand{\mcT}{\mathcal{T}}

\newcommand{\mcD}{\mathcal{D}}

\newcommand{\mcP}{\mathcal{P}}
\newcommand{\mcG}{\mathcal{G}}
\newcommand{\mcX}{\mathcal{X}}

\newcommand{\mcZ}{\mathcal{Z}}

\newtheorem{assumption}{Assumption}


\newcommand{\mbbG}{\mathbb{G}}
\newcommand{\mbbV}{\mathbb{V}}
\newcommand{\mbbW}{\mathbb{W}}
\newcommand{\mbbE}{\mathbb{E}}


\newcommand{\mbu}{\mathbf{u}}

\newcommand{\mbi}{\mathbf{i}}

\newcommand{\T}{\mathbb{T}}

\def\${|\!|\!|}

\def\scal#1{{\langle#1\rangle}}
\def\vscal#1{{(#1)}}
\def\bvscal#1{{\Big(#1\Big)}}



\newcommand{\mfT}{\mathfrak{T}}

\newcommand{\mft}{\mathfrak{t}}

\newcommand{\mfS}{\mathfrak{S}}

\newcommand{\mfG}{\mathfrak{G}}





\def\gr#1{#1\textnormal{-gr}}
\def\rnorm#1{#1;\rho}

\tikzset{
dot/.style={circle,fill=black,inner sep=0pt, minimum size=1.2mm}
}

\makeatletter
\DeclareRobustCommand{\cev}[1]{%
  {\mathpalette\do@cev{#1}}%
}
\newcommand{\do@cev}[2]{%
  \vbox{\offinterlineskip
    \sbox\z@{$\m@th#1 x$}%
    \ialign{##\cr
      \hidewidth\reflectbox{$\m@th#1\vec{}\mkern4mu$}\hidewidth\cr
      \noalign{\kern-\ht\z@}
      $\m@th#1#2$\cr
    }%
  }%
}
\makeatother

\newcommand{\mrd}{\mathop{}\!\mathrm{d}}

\newcommand{\bonds}{\mathscr{B}}
\newcommand{\obonds}{\overline{\mathscr{B}}}

\newcommand{\g}{\mathbf{g}}

\DeclareMathOperator{\Trace}{Tr}

\newcommand{\hol}{\textnormal{hol}}
\newcommand{\Haus}{\mathrm{H}}

\def\emptyset{{\centernot\ocircle}}

\usetikzlibrary{shapes.misc}
\usetikzlibrary{shapes.symbols}
\usetikzlibrary{shapes.geometric}
\usetikzlibrary{decorations}
\usetikzlibrary{decorations.markings}
\usetikzlibrary{arrows.meta}

\usetikzlibrary{calc}

\colorlet{darkblue}{blue!90!black}
\colorlet{darkred}{red!90!black}
\colorlet{darkgreen}{green!70!black}

\def\${|\!|\!|}

\def\Var{\mathrm{Var}}

\def\multiset#1{\mathcal{M} ( #1 ) }

\def\bphi{\boldsymbol{\phi}}


\def\E{\mathbb{E}}
\def\P{\mathbb{P}}
\def\R{\mathbb{R}}
\def\C{\mathbb{C}}
\def\Z{\mathbb{Z}}
\def\N{\mathbb{N}}

\title{Gauge field marginal of an Abelian Higgs model}
\author{Ajay~Chandra$^1$ and Ilya~Chevyrev$^2$}

\institute{Imperial College London, \email{a.chandra@imperial.ac.uk} \and University of Edinburgh, \email{ichevyrev@gmail.com}}

\date{\today}
\begin{document}
\maketitle
\begin{abstract}
We study the gauge field marginal of an Abelian Higgs model with Villain action defined on a 2D lattice in finite volume. Our first main result, which holds for gauge theories on arbitrary finite graphs and does not assume that the structure group is Abelian, is a loop expansion of the Radon--Nikodym derivative of the law of the gauge field marginal with respect to that of the pure gauge theory. This expansion is similar to the one of Seiler~\cite{Seiler82} but holds in greater generality and uses a different graph theoretic approach. Furthermore, we show ultraviolet stability for the gauge field marginal of the model in a fixed gauge. More specifically, we show that moments of the H{\"o}lder--Besov-type norms introduced in~\cite{Chevyrev19YM} are bounded uniformly in the lattice spacing. This latter result relies on a quantitative diamagnetic inequality that in turn follows from the loop expansion and elementary properties of Gaussian random variables.
\\[.4em]
\noindent {\small \textit{Keywords:} Lattice gauge theory, loop expansion, Abelian Higgs model, diamagnetic inequality, gauge fixing, ultraviolet stability}\\
\noindent {\small\textit{MSC classification:} 60D05, 81T13, 81T25.} 
\end{abstract}
\setcounter{tocdepth}{1}

\tableofcontents

\section{Introduction}\label{sec: intro}

The Abelian Higgs model is one of the simplest examples of a gauge theory. 
As a Euclidean quantum field theory, it is given formally by the Gibbs-type probability measure
\begin{equ}\label{eq:YMH_measure}
\mu(\mcD A, \mcD \Phi) \propto e^{-S(A,\Phi)}\mcD A \mcD \Phi\;,
\end{equ}
where $A$ is a gauge field (connection) on a $U(1)$-principal bundle $P\to M$ and $\Phi$ is a Higgs field (section) over a complex line bundle carrying a unitary representation of $U(1)$.
We will be concerned with the case that $M=[0,1]^2$, a $2$-manifold with boundary,
in which case $P$ is trivial and, after fixing a global section, $A$ and $\Phi$ can be represented as a $1$-form $A\colon M\to \R^{2}$ and a function $\Phi\colon M\to \C$.

Above, $S$ is an action of Yang--Mills--Higgs-type
\begin{equ}\label{eq:action}
S(A,\Phi) = \int_M |\mrd A|^2 + |\mrd_A\Phi|^2 + V(|\Phi|)\;,
\end{equ}
where $\mrd A = \partial_1 A_2-\partial_2 A_1$ is the curvature of $A$, $\mrd_A \colon \Phi \to \sum_{i=1}^2(\partial_i \Phi + \mbi A_i\Phi)\mrd x_i$ is the associated covariant derivative,
and $M=[0,1]^2$ is equipped with the Lebesgue measure.
The function $V\colon[0,\infty)\to\R$ is a potential, typically chosen as the `sombrero potential' $\Phi(x)=(x^2-1)^2$. 
Finally, $\mcD A \mcD \Phi$ is a purely formal Lebesgue measure on the space of (gauge field, Higgs field) pairs.
The terms $|\mrd_A \Phi|^2$ and (possibly) $V(|\Phi|)$ are non-quadratic, representing particle interactions, which render the theory non-trivial.

Making rigorous sense and studying properties of the measure $\mu$ has a history dating back to the late 70's.
Restricting to the sombrero potential,
the first successful programme to construct $\mu$ in both the continuum (ultraviolet) and infinite volume (infrared) limits is due to Brydges--Fr\"ohlich--Seiler~\cite{BFS79,BFS80,BFS81} (see also the monograph~\cite{Seiler82}).
Balaban~\cite{Balaban81I,Balaban81II,Balaban1983III} established lower and upper bounds on the vacuum energy in the 2D and 3D continuum limits in finite volume, a result later generalised by King~\cite{King86I,King86II} to full convergence in the continuum and infinite volume.
The above works employ a lattice regularisation.
Another approach based on stochastic quantisation was recently initiated in~\cite{Shen21, CCHS2d, CCHS3d} (see also~\cite{Chevyrev22} for a survey), which have given meaning to the Langevin dynamic of the (non-Abelian) version of $\mu$.

We mention also work on pure gauge theory (without Higgs) in the non-Abelian case in 2D~\cite{GKS89, Driver89, Fine91, Sengupta97, Levy03}, in which case the model is exactly solvable,
on the Abelian case in 3D and 4D~\cite{Gross83,Driver87}, and on the non-Abelian case in 3D and 4D~\cite{Balaban85IV,Balaban89,MRS93}. See also the survey~\cite{Chatterjee18}.
There have been a number of recent results in gauge theory (both on the lattice and the continuum) from the side of probability theory, see e.g.~\cite{Cao20, Sourav_flow, Sourav_state, CPS23, Chatterjee19,Chatterjee20Wilson,Chatterjee21Confine,Chevyrev19YM,Forsstrom21,FLV20,FLV21,FLV22,GS21,KasselLevy21,Shen22I, Shen22II,ShenZhuZhu24}.

Two issues that enter in the construction of~\eqref{eq:YMH_measure} are renormalisation and gauge invariance.
The issue of renormalisation appears in all (singular) quantum field theories and requires the addition of counterterms to the Lagrangian~\eqref{eq:action} for the measure~\eqref{eq:YMH_measure} to be well-defined and/or non-trivial when an ultraviolet cutoff is removed.
To wit, in the case $V(x)=x^4$, one requires replacing $S(A,\Phi)$ in~\eqref{eq:action} by
\begin{equ}
\int_M |\mrd A|^2 + |\mrd_A \Phi|^2 + |\Phi|^4 - \infty |\Phi|^2\;,
\end{equ}
where `$\infty$' stands for a constant that is diverging as the ultraviolet cutoff (e.g. lattice spacing) is sent to zero.
The issue of gauge invariance is specific to gauge theories and arises from the invariance of the action~\eqref{eq:action} under the infinite dimensional gauge group $\{g\colon M\to U(1)\}$.
See Section~\ref{sec:model} for a lattice version of this gauge group action and~\cite[Sec.~1]{CCHS2d} for a discussion on the consequences of gauge-invariance on the construction of $\mu$.

In this paper, we study lattice approximations of the Abelian Higgs models on the square $M=[0,1]^2$ with Villain action.
Specifically, we consider the probability measure defined as the law of the random variable $(\g,\bphi)$ with expectation
\begin{equ}\label{eq:disc_YMH}
\E[f(\g,\bphi)] \propto \int_{\mcG \times \C^{\mathring\Lambda}} f(g,\phi) \prod_{p} Q(g(\d p)) e^{\scal{\phi,\Delta_g \phi} - \int_{\mathring\Lambda} V(|\phi|)} \mrd g \mrd \phi\;,
\end{equ}
for any bounded measurable $f\colon \mcG\times \C^{\mathring\Lambda} \to \R$
(the proportionality constant is independent of $f$).
Here, $\Lambda$ is a lattice in $M$ with spacing $2^{-N}$ for $N\geq 1$, $\mcG$ is the space of gauge fields, i.e. functions from bonds of $\Lambda$ into $U(1)$,
$\mathring \Lambda$ is the set of non-boundary nodes of $\Lambda$,
$V\colon[0,\infty) \to \R$ is a suitable potential,
e.g. $V(|\phi|)=|\phi|^4 - C|\phi|^2$,
and
\begin{equ}\label{eq:heat_kernel}
Q(x) = e^{2^{-2N}\Delta}(x) \propto \sum_{n\in \Z} e^{-(x+ n)^2/2^{-2N+1}} \;,
\end{equ}
is the heat kernel on $U(1)$ at time $2^{-2N}$ (we identify here $x\in U(1)$ with an element of $[0,1)$).
The product in~\eqref{eq:disc_YMH} is over all plaquettes $p$ of the lattice $\Lambda$ and $\int_{\mathring{\Lambda}}$ denotes integration over $\mathring{\Lambda}$ against the counting measure weighted by $2^{-2N}$, which approximates the continuum Lebesgue measure. A complete definition of the model is given in Section~\ref{sec:model}.

The framework for this model is partly drawn from~\cite{BFS79, Levy03,Kenyon11,KasselLevy21}.
If the Higgs field is absent, this model agrees with the Abelian pure gauge theory of~\cite{Levy03,Levy06} (see Section~\ref{subsec:hol_bounds}), which has a Gaussian nature and which is exactly solvable in the continuum limit.
The probability measure~\eqref{eq:disc_YMH} is a lattice version of~\eqref{eq:YMH_measure} - the relationship between $A$ and $\g$ that one should keep in mind is $\g(x,x+2^{-N}e_j) \approx e^{\mbi 2^{-N}A_j(x)}$ for $j=1,2$.
We emphasise that the potential $V$ \textit{can} depend on the lattice spacing, and therefore handles the case of renormalised potentials necessary for a non-trivial continuum limit.
Furthermore, the presence of the Higgs field destroys the Gaussian nature of the pure gauge theory and no exact solvability properties of the continuum limit of $\g,\bphi$ are known.

Our main object of study is the gauge field marginal $\g$.
Our first main result, which applies to any  Higgs-type model,
provides a loop expansion for the integral
\begin{equ}\label{eq:loop_exp}
\int_{H^{\mathring\Lambda}} e^{\scal{\phi,\Delta_g \phi} - \int_{\mathring\Lambda} V(|\phi|)} \mrd \phi = \sum_\ell c_\ell \Trace \hol(g,\ell)\;,\qquad c_\ell \geq 0\;,
\end{equ}
where $H$ is a (complex or real) Hilbert space, $g$ is a gauge field of arbitrary operators on $H$,
the sum is over all loops in $\mathring \Lambda$, $c_\ell\geq 0$ are \textit{explicit non-negative} constants, and $\hol(g,\ell)$ is the holonomy (ordered product) of $g$ around $\ell$.
We state and prove this result in Section~\ref{sec:loop}.
The proof uses an inductive procedure to integrate out the lattice sites and we develop
a graph theoretic framework to keep track of the constants $c_\ell$ as the induction proceeds.
The loop expansion~\eqref{eq:loop_exp} generalises and clarifies the results of~\cite[Thm.~2.6 \& Lem.~2.9]{Seiler82}
and~\cite[Sec.~3]{BFS79}.\footnote{See also~\cite[Thm.~1]{BFS79_Nuc_Phys_B} where an expansion similar to~\eqref{eq:loop_exp} is claimed and~\cite{BFS79,BFS80} are referenced for a proof, but the proofs therein assume $V$ is quadratic and we are unable to locate a proof in these works for more general potentials $V$.}
Specialising to the case $G=U(1)$,
this loop expansion allows us to write the law of $\g$ in~\eqref{eq:disc_YMH} in terms of the pure gauge theory with a Radon--Nikodym derivative of `positive type'.

The second main result, which uses~\eqref{eq:loop_exp}, provides moment estimates on gauge-invariant observables of $\g$.
These moment estimates can be seen as a quantitative diamagnetic inequality and are summarised in Corollaries~\ref{cor:moment_bound_hol} and~\ref{cor:moment_bound_plaq_sum} in Section~\ref{sec:diamagnetic}.

Finally, we show moment bounds, uniform in the lattice spacing, on a gauge-fixed version of $\g$ in a H{\"o}lder--Besov-type space.
This result implies ultraviolet stability of the gauge-field marginal and is summarised in the following theorem.
\begin{theorem}\label{thm:moments_YM}
There exists a $\mfG$-valued random variable $\mbu$, measurable with respect to $\g$,
such that $\E[|\log \g^{\mbu}|_{\beta}^q] \leq C$ for all $\beta\in (0,1)$ and $q>0$, where $C$ depends only on $\beta,q$ and \emph{not} on $N$.
\end{theorem}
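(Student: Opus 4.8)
The plan is to reduce the H\"older--Besov moment bound for the gauge-fixed field $\log\g^{\mbu}$ to two ingredients: (i) the quantitative diamagnetic inequality of Section~\ref{sec:diamagnetic}, which controls moments of gauge-invariant observables (holonomies, plaquette sums) of $\g$ uniformly in $N$; and (ii) a gauge-fixing lemma that converts such gauge-invariant bounds into bounds on a concrete representative $\g^{\mbu}$ in a fixed gauge, together with the linear/multiscale structure of the H\"older--Besov-type norm $|\cdot|_\beta$ from~\cite{Chevyrev19YM}. First I would recall that the norm $|\cdot|_\beta$ on (the logarithm of) a lattice gauge field is, by construction in~\cite{Chevyrev19YM}, controlled by a countable family of tested Wilson-loop/plaquette-sum quantities at dyadic scales; concretely, one needs moment estimates of the form $\E[|\sum_{p\in R} (g(\d p)-1)|^q]\lesssim |R|^{q/2}$ (and the analogous holonomy bounds along lattice paths) uniformly over rectangles $R\subset\Lambda$ and uniformly in $N$. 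These are exactly Corollaries~\ref{cor:moment_bound_hol} and~\ref{cor:moment_bound_plaq_sum}.

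The key steps, in order, are as follows. Step 1: Fix a gauge. Using a maximal-tree (axial-type) gauge fixing on $\Lambda$, construct a $\mfG$-valued random variable $\mbu=\mbu(\g)$ so that $\g^{\mbu}$ vanishes along a chosen spanning tree of $\Lambda$; then the remaining components of $\g^{\mbu}$ on non-tree bonds are, up to boundary effects, expressible as products of plaquette variables $g(\d p)$ — i.e. $\log\g^{\mbu}$ on any bond equals a sum of curvatures over an explicit set of plaquettes. This is the only place gauge-fixing enters and it is purely deterministic/combinatorial. Step 2: Transfer moments. Because $\mbu$ is chosen path-wise, $\E[F(\g^{\mbu})]$ for any functional $F$ equals $\E$ of a functional of the plaquette variables, so the uniform-in-$N$ moment bounds of Section~\ref{sec:diamagnetic} apply verbatim to the relevant observables of $\g^{\mbu}$. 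Step 3: Kolmogorov-type estimate. Feed these moment bounds into the definition of $|\cdot|_\beta$: bound the $q$-th moment of each dyadic building block of the norm by the plaquette-sum estimate $\lesssim(\text{area})^{q/2}$, sum the resulting geometric series over scales (convergent precisely because $\beta<1$, with the $\beta$-dependent and $q$-dependent constant absorbed into $C$), and conclude $\E[|\log\g^{\mbu}|_\beta^q]\le C$ with $C=C(\beta,q)$ independent of $N$. Step 4: Check $\mbu$ is genuinely $\mfG$-valued and measurable, and that the estimate survives the continuum normalisation of the norm (dyadic blocks indexed by scales $\le N$ only, with the bound uniform in the truncation).

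I expect the main obstacle to be Step 3 together with the interface between Sections~\ref{sec:diamagnetic} and~\cite{Chevyrev19YM}: one must verify that the diamagnetic moment bounds are stated for \emph{exactly} the family of rectangles/paths and with the scaling exponent $1/2$ in the area that the H\"older--Besov norm consumes, and that these hold with $q$-independent implied constants growing at most like a power of $q$ (or otherwise, that the growth is still summable against the geometric factor for every fixed $q$). If the raw estimate from the loop expansion gives $\E[|\text{plaquette sum over }R|^q]\le C_q|R|^{q/2}$ with $C_q$ growing acceptably in $q$ (which it should, since after the loop expansion the plaquette sum is dominated, in the sense of positive-type Radon--Nikodym derivative, by the corresponding pure-gauge Gaussian-like object whose moments are explicit), then the summation over dyadic scales closes for every $\beta<1$ and the theorem follows. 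A secondary technical point is handling the boundary nodes $\Lambda\setminus\mathring\Lambda$ and the fact that $\phi$ is only defined on $\mathring\Lambda$: the gauge-fixing tree should be chosen compatibly with the boundary so that no uncontrolled boundary holonomies appear, and this is routine but needs to be stated carefully.
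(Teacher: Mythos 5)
Your overall architecture (diamagnetic inequality $\to$ gauge fixing $\to$ Kolmogorov-type summation over dyadic scales) matches the paper's, but Step~1 contains a genuine gap that would make the argument fail for $\beta\geq 1/2$. A pure maximal-tree (axial) gauge on all of $\Lambda^{(N)}$ is \emph{not} regularising enough: with the tree of Figure~\ref{fig:max_tree_gammas}, the value of $\log \g^{\mbu}$ on a short vertical segment $\ell$ at horizontal distance $d$ from the tree trunk is (mod $2\pi$) the plaquette sum over a rectangle of area $d\,|\ell|$, so the best bound the diamagnetic inequality can give is $|\ell|^{\alpha/2}$ with $\alpha<1$, i.e. $|\log\g^{\mbu}|_{\gr\beta}$ is only controlled for $\beta<1/2$. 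This is precisely why Section~\ref{sec:gauge_fix} does \emph{not} use the axial gauge globally: it uses it only at a coarse (random) scale $m$ with $2^m\sim(\frac8\pi[\g]_\alpha)^{2/\alpha}$ (Lemma~\ref{lem:axial}), and then runs the multiscale Landau-type construction of Lemma~\ref{lem:Landau}, which redistributes the curvature symmetrically at each dyadic refinement from scale $m$ down to $N$ and yields the deterministic bound of Theorem~\ref{thm:gauge_fix}, valid for all $\beta\in[0,1]$. Without some such harmonic/balanced gauge at the fine scales, Step~3 cannot close for $\beta$ close to $1$, no matter how good the moment bounds on plaquette sums are.

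Two further points where the paper's proof differs from (and repairs issues in) your sketch. First, the probabilistic input is organised through the single gauge-invariant random variable $[\g]_\alpha$ of Definition~\ref{def:[g]}: Lemma~\ref{lem:Kolm_bound} bounds all its moments uniformly in $N$ (using Corollary~\ref{cor:moment_bound_plaq_sum} together with the purely combinatorial rectangle decomposition of Lemma~\ref{lem:rect_decomp}), and Theorem~\ref{thm:gauge_fix} then gives a \emph{pathwise} bound $|\log\g^{\mbu}|_\beta\lesssim 1+[\g]_\alpha^{2/\alpha}+[\g]_{\beta+\kappa}$ on the good event; this replaces your per-building-block chaining. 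Second, Theorem~\ref{thm:gauge_fix} only applies when $2^N>(\frac8\pi[\g]_\alpha)^{2/\alpha}$; on the complementary event one must set $\mbu\equiv 1$, use the trivial bound $|\log\g|_\beta\leq 2\pi 2^{N(1+\beta/2)}$ of Remark~\ref{rem:trivial_bounds}, and beat the exponential factor with the superpolynomially small probability coming from high moments of $[\g]_\alpha$ (see~\eqref{eq:log_g_u_bound}). You should also be careful with the claim that $\log\g^{\mbu}$ on a bond ``equals'' a plaquette sum: this holds only modulo $2\pi$, and the wrap-around is controlled in the paper via the standing inequality $|\log e^{\mbi x}|\leq|x|$ and the smallness conditions~\eqref{eq:simple_bound}--\eqref{eq:dyadic_bonds_bound}.
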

We give the proof of Theorem~\ref{thm:moments_YM} in Section~\ref{sec:moments}.
Above, $|\cdot|_\beta$ is a norm on discrete $1$-forms introduced in~\cite{Chevyrev19YM} (see also~\cite{CCHS2d} for a natural version of this norm in the continuum). We recall the definition of $|\cdot|_\beta$ in
Section~\ref{sec:1-forms}.
We also refer to Section~\ref{sec:model} for the definition of the gauge group $\mfG$ and the gauge transformation $\g^\mbu$,
and to Section~\ref{subsec:notation}
for the definition of $\log \colon U(1)\to [-\pi,\pi)$.

The main step in the proof of Theorem~\ref{thm:moments_YM} is to combine
the quantitative diamagnetic inequality (Corollary~\ref{cor:moment_bound_plaq_sum})
with a gauge fixing procedure in Section~\ref{sec:gauge_fix}.
This procedure is a simplification and improvement of the one given in~\cite[Sec.~4]{Chevyrev19YM}, but is restricted to the Abelian case.
Our results are furthermore more quantitative than those of~\cite{Chevyrev19YM}, which,
in our notation, only show tightness of $\log \g^\mbu$ rather than moment estimates.
See, however,~\cite[Sec.~9]{CS23} where moment estimates were also recently shown in the non-Abelian 2D pure Yang--Mills case and applied to show that
the Yang--Mills measure on $\T^2$ is invariant for the associated stochastic quantisation equation (tightness of $\log \g^\mbu$ is, in general, insufficient to show this invariance).

A consequence of Theorem~\ref{thm:moments_YM} is that piecewise constant extensions of $\log \g^\mbu$ are tight in the H\"older--Besov space $\CC^{\beta-1}$ for $\beta<1$ (see~\cite[Prop.~3.21]{Chevyrev19YM}).
Furthermore, $\log \g^\mbu$ converges in law along subsequences as $N\to\infty$ in $\Omega^1_\beta$ for $\beta\in (0,1)$ in the same sense as in~\cite[Thm~3.26]{Chevyrev19YM}.\footnote{Since the
$\g$ live on different lattices indexed by $N$, making this statement precise is not entirely trivial. Since this is not our main focus, we refrain from doing so here.}
We recall that $(\Omega^1_\beta,|\cdot|_\beta)$ is a Banach space of distributional $1$-forms that embeds into $\CC^{\beta-1}$ and on which holonomies and Wilson loops are well-defined and continuous (for axis-parallel paths).
Compared to~\cite{BFS81,King86I},
Theorem~\ref{thm:moments_YM} has a simpler proof
and yields a stronger form
of ultraviolet stability for the gauge-field marginal $\g$ (furthermore, only gauge-invariant fields are considered in~\cite{BFS81,King86I}) but makes no statement on the stability of the Higgs field $\bphi$.
We also mention~\cite{Klimek87} where tightness (but in a gauge-invariant sense and in a much weaker space)
is shown for a non-Abelian gauge field coupled to a fermionic field
(one can also view \cite[Eq.~(24), App.~B]{Klimek87} as a version of Corollary~\ref{cor:moment_bound_hol}
for fermionic coupling and for loops with a single plaquette).
\begin{remark}
For simplicity, we consider only Dirichlet boundary conditions on the Higgs field and free boundary conditions on the gauge field in~\eqref{eq:disc_YMH}.
Our method can be adapted to other boundary conditions, such as periodic or half-periodic
(see Remark~\ref{rem:periodic} for subtleties that arise in the periodic case).
It could also be used to treat general graphs with a certain structure -- the key property that we use is that there is a distinguished collection of plaquettes (faces) and the pure gauge theory (without the Higgs field) is determined by a centred Gaussian vector indexed by the plaquettes as in Section~\ref{subsec:hol_bounds}.
See also Remark~\ref{rem:generalisation}.
We furthermore believe our methods can be adapted to take the infinite volume limit, but we do not explore this here.

Note that in~\eqref{eq:disc_YMH} we use the Villain (heat kernel) action;
this choice allows us to construct $\g$ from a Gaussian random field and the Radon--Nikodym derivative~\eqref{eq:loop_exp}. 
It would be interesting to find an extension of our results to other actions, such as the Wilson action, where the exact Gaussian structure is unavailable.

Finally, we believe the methods of this paper can be used to analyse the case when the Higgs field $\bphi$ takes values in $\R^N$ or $\C^N$ provided that the structure group remains Abelian.
\end{remark}
\subsection{Notation}\label{subsec:notation}

We collect some frequently used notation.
Let $\mbi = \sqrt{-1}$ be the imaginary unit and $U(1)=\{z\in\C \,: \,|z|=1\}$ be the circle group.
We let $\log \colon U(1) \to [-\pi,\pi)$ be the map sending $e^{\mbi x}$ to $x$ (we thus identify $\R$ with the Lie algebra of $U(1)$).
We let $\N = \{0,1,\ldots\}$ denote the non-negative integers.

By $x\lesssim y$ we mean that $x \leq Cy$ for some universal constant $C\geq 0$.
When $C$ depends on certain parameters, e.g. $\alpha,\beta,\ldots$ we write $x\lesssim_{\alpha,\beta,\ldots} y$. 
When writing expectations we often use bold symbols for random variables and non-bold symbols for integration variable, e.g. writing $\E[f(\mathbf{X})] = \int f(X) \mrd \mu(X)$.

\section{Definition of the model}
\label{sec:model}

In this section, we give the definitions necessary to define the model~\eqref{eq:disc_YMH}.
We return to these definitions in Section~\ref{sec:diamagnetic} and Section~\ref{sec:loop} can be read without them.

We equip $M\eqdef [0,1]^2$ with its natural geodesic distance $|x-y|$.
We let $\partial M$ denote the boundary of $M$.
For every integer $N \geq 1$,
we define
$\Lambda = \{k2^{-N} \,:\, k= 0,\ldots, 2^{N}\}^2\subset M$, the lattice of with mesh size $2^{-N} $.
We let $\partial \Lambda = \Lambda\cap \partial M$ denote the boundary nodes of $\Lambda$ and $\mathring \Lambda = \Lambda\setminus \partial\Lambda$ the non-boundary nodes.
We equip $\Lambda$ and $\mathring{\Lambda}$ with the counting measure weighted by $2^{-2N}$ and denote the correspondingly integrals for $\phi\in \C^{\Lambda}$ simply by $\int_{\Lambda} \phi = 2^{-2N}\sum_{x\in\Lambda}\phi_x$,
and likewise for $\mathring{\Lambda}$.
\begin{remark}
We will mostly suppress the dependence on $N$ in most of our notation.
When this dependence becomes important, we will write $\Lambda^{(N)}$, $\mathring\Lambda^{(N)}$, etc.
\end{remark}
We define $\bonds = \bonds^{(N)}$ as the collection of all 
oriented bonds, i.e. all pairs $(x,y)\in\Lambda^2$ for which $|x-y|=2^{-N}$.
We let $\mcG$ denote the set of maps $g\in U(1)^\bonds$ such that $g_{xy}= \bar g_{yx}$.
An element of $\mcG$ is called a \emph{gauge field}.

The vector space $\C^{\mathring\Lambda}$ is the space of Higgs field configurations on $\Lambda$.
We identify $\C^{\mathring\Lambda}$ with a subspace of $\C^\Lambda$ by setting $\phi_x\eqdef 0$ for all $\phi\in\C^{\mathring\Lambda}$ and $x\in\partial\Lambda$.
We equip $\C^{\Lambda}$ with the inner product $\scal{\cdot,\cdot}$
defined by
\begin{equ}\label{eq:inner_prod_def}
\scal{\phi,\phi'} =
\int_\Lambda
\phi\overline{\phi'} = 2^{-2N}\sum_{x\in\Lambda} \phi_x\overline{\phi'_{x}}\;.
\end{equ}
We similarly equip $\C^\bonds$ with the inner product
\[
\scal{\eta,\eta'} \eqdef
2^{-2N} \sum_{b \in \bonds}
\eta_{b}\overline{\eta'_{b}}\;.
\]
\begin{definition}
For $g \in \mcG$, the \emph{covariant derivative} $\mrd_g \colon \C^\Lambda \to \C^{\bonds}$ is defined for $(x,y)\in\bonds$ by
\[
(\mrd_g \phi)(x,y) = 2^N (g_{xy}\phi_y - \phi_x)\;.
\]
\end{definition}
Note that the adjoint operator $\mrd_g^* \colon \C^\bonds\to \C^\Lambda$ is given by
\begin{equ}
\mrd_g^*\eta (x) = 2^N\sum_{y\to x}\{g_{xy}\eta_{yx}-\eta_{xy}\} \;,
\end{equ}
where $y\to x$ denotes that $y\in\Lambda$ with $(y,x)\in\bonds$.
Indeed
\begin{equs}
\scal{\mrd_g\phi,\eta}
= 2^{-N}\sum_{x\in\Lambda}\sum_{y\to x} (g_{xy}\phi_y - \phi_x) \bar\eta_{xy}
= 2^{-N}\sum_x \phi_x\sum_{y\to x} \overline{(g_{xy}\eta_{yx}-\eta_{xy} )} 
\end{equs}
where the final equality follows from $g_{xy}=\bar g_{yx}$.
\begin{definition}
We define the \emph{covariant Laplacian} associated with $g\in\mcG$ as the negative semi-definite
self-adjoint linear operator
\begin{equ}
\Delta_g
\eqdef -\frac12\pi_{\mathring\Lambda} \circ \mrd_g^*\circ\mrd_g
\colon \C^{\mathring\Lambda} \to \C^{\mathring\Lambda}\;,
\end{equ}
where $\pi_{\mathring\Lambda}\colon\C^{\Lambda}\to\C^{\mathring\Lambda}$ is the canonical projection.
\end{definition}
Note that, for all $x\in\mathring\Lambda$,
\begin{equ}\label{eq:Delta_g}
\Delta_g\phi(x) = 2^N \frac12 \sum_{y\to x} \{\mrd_g\phi(x,y)-g_{xy}\mrd_g\phi(y,x)\}
= 2^{2N}\sum_{y\to x} \{g_{xy}\phi_y - \phi_x\}\;,
\end{equ}
and that $\Delta_g$ defines a positive semi-definite symmetric quadratic form on $\C^{\mathring\Lambda}$
\begin{equ}
- \scal{\phi, \Delta_{g} \phi'} = \frac12\scal{\mrd_g \phi, \mrd_g \phi'} =
\sum_{y\to x}
(g_{xy}\phi_y - \phi_x)\overline{(g_{xy}\phi'_y - \phi'_x )}\;.
\end{equ}
Remark that our definition of the Laplacian agrees (up to a scalar multiple) with the definitions in~\cite[Sec.~2J]{KasselLevy21} and~\cite[Sections~3,~8]{Kenyon11}.
\begin{definition}\label{def:gauge_transforms}
A \emph{gauge transformation} is a map $u\colon \Lambda\to U(1)$.
We denote by $\mfG$ the set of such maps.
Every $u\in\mfG$ acts on $\mcG$ by
\begin{equ}
g_{xy} \mapsto (u\cdot g)_{xy} \eqdef g^u_{xy} \eqdef u_x g_{xy} u_y^{-1}\;,
\end{equ}
on $\C^{\Lambda}$
by
\begin{equ}
\phi_x \mapsto (u\cdot \phi)_x \eqdef \phi^u_x \eqdef u_x \phi_x\;,
\end{equ}
and on $\C^\bonds$ by
\begin{equ}
\eta_{xy} \mapsto (u\cdot\eta)_{xy}\eqdef \eta^u_{xy} \eqdef u_x\eta_{xy}\;.
\end{equ}
\end{definition}
Note that every $u\in\mfG$ is an isometry on $\C^\Lambda$ and $\C^{\bonds}$,
and that we have the covariant identities
\begin{equ}\label{eq:gauge_covar}
(\mrd_g \phi)^u
= \mrd_{g^u} \phi^u \;,\quad
(\mrd^*_g \eta)^u
= \mrd^*_{g^u} \eta^u \;,\quad
(\Delta_g\phi)^u 
= \Delta_{g^u} \phi^u \;.
\end{equ}
\begin{definition}\label{def:loops}
Denote by
\begin{equ}\label{eq:loops_def}
\mcb{L} = \{(\ell_0,\ell_1,\ldots,\ell_n) \in \Lambda^{n+1}\,:\,
n\geq 0\,,\,
\ell_0=\ell_n\,,\,
|\ell_i-\ell_{i+1}|=2^{-N}
\}
\end{equ}
the set of all loops in $\Lambda$.
We define  the subset $\mathring{\mcb{L}}\subset \mcb{L}$
that contains loops only visiting sites in $\mathring\Lambda$, i.e. we restrict to $\ell_i \in\mathring{\Lambda}$ in~\eqref{eq:loops_def}.

A \emph{plaquette} of $\Lambda$ is a loop $p\in\mcb{L}$ of the form 
\begin{equ}\label{eq:plaq}
p=(x,x+2^{-N}e_1,x+2^{-N}e_1+2^{-N}e_2,x+2^{-N}e_2,x)\;,
\end{equ}
where $x\in\Lambda$.
Let $\mathbf{P} \subset \mcb{L}$ denote the set of plaquettes of $\Lambda$.

For $g\in\mcG$ and $\ell=(\ell_0,\ldots,\ell_n)\in\mcb{L}$, let $\hol(g,\ell)\in U(1)$
denote the holonomy (i.e. ordered product) of $g$ around $\ell$ defined by
\begin{equ}
\hol(g,\ell) = g(\ell_0,\ell_1)g(\ell_1,\ell_2)\ldots g(\ell_{n-1},\ell_n)\;.
\end{equ}
For $p\in\mathbf{P}$, we use the shorthand $g(\d p) = \hol(g,p)$.
\end{definition}
%
%
In the rest of the article, we take $V\colon [0,\infty)\to \R$ such that $\int_0^\infty e^{\alpha x^2 -V(x)}\mrd x < \infty$ for all $\alpha>0$
(this condition is used in the proof of Lemma~\ref{lemma:specific_loop_expansion} but can be relaxed), e.g. $V(x)=x^4-C^{(N)}x^2$ for any $C^{(N)}\in \R$.
With these notations, the probability measure~\eqref{eq:disc_YMH} is now defined.

\section{Loop expansion revisited}
\label{sec:loop}

The main results of this section are Theorems~\ref{thm:loop_expansion} and~\ref{thm:real_loop_expansion} which show
a loop expansion of the type~\eqref{eq:loop_exp}
in a completely general setting for complex (resp. real) representations of a gauge field $g$ with a structure group that is not necessarily Abelian or compact.

As in~\cite[Thm.~2.6 \& Lem.~2.9]{Seiler82}, we develop our loop expansion by iteratively integrating out the Higgs field site by site.
However the argument of \cite{Seiler82} is based on imposing conditions on the representation of the gauge group $G$ after integration over $G$ - see ``Property (R)'' of \cite[Def.~2.7]{Seiler82}. 
Our approach does not use any properties of the representation nor integration over $G$; in fact, we even allow the ``gauge field'' $g_{e}$ to act on the Higgs vector space $H$ as a non-invertible operator. 
Our main tool is computing moments of the uniform distribution on a complex (resp. real) sphere using Wick's rule for complex (resp. real) Gaussians, see\footnote{The proof of~\cite[Thm.~2.6]{Seiler82} gives little detail and relatively inexplicit formulae so it is hard to compare our methods further. We make the minor point that, in the complex case, Lemma~\ref{lemma:spherical_integral}  indicates assumption~\cite[Eq.~(2.13)]{Seiler82} looks unreasonable as it is written.} Lemmas~\ref{lemma:spherical_integral} and~\ref{lemma:spherical_integral_real}. 

We fix a choice of finite dimensional complex (resp. real) Hilbert space $H$ and write $(\cdot,\cdot)$ for the corresponding sesquilinear (resp. bilinear) inner product on $H$ and $|\cdot|$ for the norm. 
We define $\hat{H}$ to be the unit sphere in $H$, that is $\hat{H} = \{ \varphi \in H: |\varphi| = 1\}$. 
We write $\mrd \varphi$ for Lebesgue measure on $H$ (giving volume $1$ to any cube formed by an orthonormal basis) and write $\mrd \hat{\varphi}$ for the induced measure on $\hat{H}$ coming from  $\mrd \varphi$.

\begin{assumption}\label{assumption:single_site}
Our ``single site'' measures for the Higgs field will be measures $\mrd \rho^{\lambda}$ on $H$ which are of the form
\[
\int_{H} f(\phi)\ \mrd \rho^{\lambda}(\phi)
=
\int_{0}^{\infty}  \int_{\hat{H}} 
f(s \hat{\phi})\ \mrd \hat{\phi}\ \mrd \lambda(s)\;,
\]
where the radial measure $\lambda$ is always assumed to have better\footnote{This is a simple assumption to state but stronger than what we actually need. The actual integrability criterion is being able to expand the exponentials on the left-hand sides of Theorems~\ref{thm:loop_expansion} and~\ref{thm:real_loop_expansion} and interchange the integral and sum.} than Gaussian tails, i.e. we assume that, for any $\alpha > 0$,  $\int_{0}^{\infty} e^{\alpha s^2} \mrd \lambda(s) < \infty$. 
\end{assumption}

\begin{definition}\label{def:oriented_muligraph}
An oriented multi-graph $\mathbb{G}=(\mbbV,\mbbE)$ with abstract edge set consists of finite sets $\mbbV$ and $\mbbE$ where $\mbbE$ is endowed with two maps $\mbbE\ni e\mapsto \underline e\in\mbbV$ and $\mbbE\ni e\mapsto \overline e\in\mbbV$.
Elements of $\mbbV$ are called vertices and elements of $\mbbE$ are called edges.
The vertices $\underline e$ and $\overline e$ are called the starting and ending points of $e$, respectively.

The term multi-graph (as opposed to graph) here refers to the fact that we are allowed to have distinct $e_1,e_2 \in \mathbb{E}$, with $\underline{e_1} = \underline{e_2}$ and $\overline{e_1} = \overline{e_2}$.

Note that we allow $\underline e = \overline e$, that is we allow self-loops in $\mathbb{G}$ and we write $\mathbb{E}^{o} \subset \mathbb{E}$ for the set of self-loops, and $\vec{\mathbb{E}} = \mbbE \setminus \mbbE^{o}$ for the set of edges of $\mbbE$ which are not self-loops.

\end{definition}
We fix for both Sections~\ref{subsec:complex_case} and~\ref{subsec:real_case} below an oriented finite multi-graph $\mathbb{G} = (\mathbb{V},\mathbb{E})$. 

\subsection{The complex case}\label{subsec:complex_case}

In this subsection we assume $H$ is a finite dimensional complex Hilbert space. 
We start with the following lemma regarding integration on $\hat{H}$. 
\begin{lemma}\label{lemma:spherical_integral}
For any $N, M \in \N$ and vectors $a_{1},\dots,a_{M}, b_{1},\dots,b_{N} \in H$, we have 
\begin{equ}
\int_{\hat{H}} \prod_{i=1}^M \vscal{a_i,\hat{\varphi}}\prod_{j=1}^N \vscal{\hat{\varphi},b_j} \ \mrd \hat{\varphi} 
=
\delta_{NM} K^{\C}_{N} \sum_{\sigma\in\mfS_{N}} 
\prod_{i=1}^{N} \vscal{a_i,b_{\sigma(i)}}\;,
\end{equ}
where $\mathfrak{S}_{N}$ is the set of permutations on $N$ elements and 
\begin{equ}\label{constant_KN}
K^{\C}_{N} \eqdef \frac{2 \pi^{\dim_\C(H)}}{(N+\dim_\C(H) - 1)!}\;,
\end{equ}
where $\dim_\C(H)$ is the dimension of $H$ as a complex vector space.
\end{lemma}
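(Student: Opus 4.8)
The plan is to reduce the spherical integral to a Gaussian expectation via polar coordinates, and then evaluate the Gaussian expectation by Wick's rule. Write $d=\dim_\C(H)$ and regard $H$ as $\R^{2d}$, so that $\mrd\varphi$ is $2d$-dimensional Lebesgue measure and, in polar coordinates $\varphi = s\hat\varphi$ with $s\geq 0$ and $\hat\varphi\in\hat H$, one has $\mrd\varphi = s^{2d-1}\,\mrd s\,\mrd\hat\varphi$, with $\mrd\hat\varphi$ the surface measure inherited from $\mrd\varphi$. Let $Z$ be a standard circularly-symmetric complex Gaussian on $H$, i.e.\ the $H$-valued random variable with law $\pi^{-d}e^{-|\varphi|^2}\mrd\varphi$; in an orthonormal basis its coordinates $Z_k$ are i.i.d.\ with $\E[Z_k\bar Z_l]=\delta_{kl}$ and $\E[Z_kZ_l]=0$. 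Each factor $\vscal{a_i,\varphi}$ and $\vscal{\varphi,b_j}$ is homogeneous of degree $1$ in $s$ when $\varphi = s\hat\varphi$ (the scalar $s$ being real, so no conjugation enters), hence the integrand is homogeneous of degree $M+N$, and passing to polar coordinates gives
\[
\E\Bigl[\prod_{i=1}^M\vscal{a_i,Z}\prod_{j=1}^N\vscal{Z,b_j}\Bigr]
= \pi^{-d}\Bigl(\int_0^\infty s^{M+N+2d-1}e^{-s^2}\,\mrd s\Bigr)\int_{\hat H}\prod_{i=1}^M\vscal{a_i,\hat\varphi}\prod_{j=1}^N\vscal{\hat\varphi,b_j}\,\mrd\hat\varphi\;.
\]
The radial integral equals $\tfrac12\Gamma\bigl(\tfrac{M+N}{2}+d\bigr)$, which is finite and strictly positive, so the spherical integral $I$ is determined by the Gaussian expectation on the left.

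For the Gaussian side, note that $\vscal{a_i,\cdot}$ and $\vscal{\cdot,b_j}$ are linear functionals, so the product is a polynomial in the jointly Gaussian real and imaginary parts of the $Z_k$ and Isserlis/Wick applies. In an orthonormal basis the two types of factors are, respectively, of the form $\sum_k c_k Z_k$ and $\sum_k d_k\bar Z_k$ (possibly with the roles swapped, according to the sesquilinearity convention for $(\cdot,\cdot)$); since $\E[Z_kZ_l]=\E[\bar Z_k\bar Z_l]=0$, the only surviving contractions pair an $\vscal{a_i,Z}$-factor with an $\vscal{Z,b_j}$-factor. In particular the expectation vanishes unless $M=N$, and when $M=N$,
\[
\E\Bigl[\prod_{i=1}^N\vscal{a_i,Z}\prod_{j=1}^N\vscal{Z,b_j}\Bigr]
= \sum_{\sigma\in\mfS_N}\prod_{i=1}^N\E\bigl[\vscal{a_i,Z}\vscal{Z,b_{\sigma(i)}}\bigr]
= \sum_{\sigma\in\mfS_N}\prod_{i=1}^N\vscal{a_i,b_{\sigma(i)}}\;,
\]
using $\E[\vscal{a,Z}\vscal{Z,b}]=\vscal{a,b}$, immediate from $\E[Z_k\bar Z_l]=\delta_{kl}$. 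Combining the two displays with $\Gamma(N+d)=(N+d-1)!$ gives $I = \tfrac{2\pi^d}{(N+d-1)!}\sum_{\sigma\in\mfS_N}\prod_{i=1}^N\vscal{a_i,b_{\sigma(i)}}$ when $M=N$, and $I=0$ otherwise, which is the claim.

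The argument is essentially routine; the only points requiring care are (i) matching normalisation conventions -- the constant in front of the Gaussian density, the fact that $\mrd\hat\varphi$ is the surface measure induced from $\mrd\varphi$ (which is exactly what produces $K^\C_N$, with the sanity check $I=|S^{2d-1}|=\tfrac{2\pi^d}{(d-1)!}$ at $M=N=0$), and the linearity convention of $\vscal{\cdot,\cdot}$ -- and (ii) justifying the complex Wick step, namely that for a circularly-symmetric complex Gaussian only $Z$--$\bar Z$ contractions contribute, which is what forces the factor $\delta_{NM}$. An alternative to invoking the complex Wick rule is to expand in an orthonormal basis and reduce to the classical real Isserlis theorem, but the complex formulation is cleaner and is the one we will use (cf.\ also Lemma~\ref{lemma:spherical_integral_real} for the real analogue).
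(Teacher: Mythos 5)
Your proof is correct and follows essentially the same route as the paper's: polar decomposition of the Gaussian integral to isolate the spherical part, evaluation of the radial factor as $\tfrac12\Gamma(N+\dim_\C(H))$, and the complex Wick rule (with $\E[Z_kZ_l]=0$ forcing $\delta_{NM}$) to produce the permutation sum. No gaps.
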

\begin{proof}
Let $Z$ be a standard complex Gaussian on $H$ - that is, $Z$ is centred and for any $a,b \in H$, we have $\E[(a,Z)(Z,b)] = \vscal{a,b}$ and $\E[(a,Z)(b,Z)] = 0$.
Then a simple scaling argument shows that
\begin{equs}
\E\Big[ &
\prod_{i=1}^M \vscal{a_i,Z}\prod_{j=1}^N \vscal{Z,b_j} 
\Big]
=
\frac{1}{\pi^{\dim_\C(H)}}
\int_{H} 
\prod_{i=1}^M \vscal{a_i,\varphi}\prod_{j=1}^N \vscal{\varphi,b_j} e^{-|\varphi|^{2}} \mrd \varphi\\
{}&=
\frac{1}{\pi^{\dim_\C(H)}}
\int_{0}^{\infty}
r^{N + M + 2 \dim_\C(H) - 1} e^{-r^2}\  \mrd r \enskip \times
\int_{\hat{H}}  \prod_{i=1}^M \vscal{a_i,\hat{\varphi}}\prod_{j=1}^N \vscal{\hat{\varphi},b_j}\; \mrd \hat{\varphi} \;.
\end{equs}
The desired result then follows by using Wick's rule for complex Gaussians on the left-hand side and observing that
\[
\int_{0}^{\infty} 
r^{2N + 2 \dim_\C(H) - 1} e^{-r^2}\ \mrd r 
= (N+\dim_\C(H) -1 )!/2\;.
\]
\end{proof}
We now introduce some definition and notation for graphs that appear in our expansion. 

\begin{definition}\label{def:OG}
Give a finite set $V$, an \textit{oriented graph} on $V$ is an oriented multi-graph with vertex set $V$ and set of edges $E$ and where we concretely realize $E \subset V^2$, enforcing that, for every $e \in E$, $e = (\underline{e},\overline{e})$. 
Note that, once $V$ is fixed, we often identify the oriented graph $G$ on $V$ with the choice of edge set $G=E \subset V^2$.
\end{definition}

Given three disjoint sets $A,B,C$, we define $\hat{\mcb{G}}_{\C}(A,B,C)$ to be the set of all oriented graphs\footnote{Given two distinct vertices  $b_1, b_2 \in B$, there could be two edges with end points $\{b_1,b_2\}$. However, we use still call this an oriented \textit{graph} as opposed to \textit{multi-graph} since these two edges would have different orientations.}  on $V = A \sqcup B \sqcup C$ where we enforce that 
each vertex in $A$ is incident only to a single outgoing edge, each vertex in $C$
is incident only to a single incoming edge, and each vertex in $B$ is incident to precisely one outgoing edge and one incoming edge (we allow these edges to be the same, that is we allow a vertex in $B$ to have a self-loop).
See Figure~\ref{fig:Graph} for an example.

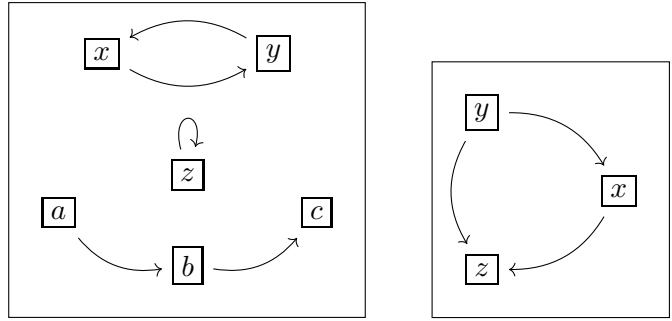
\begin{figure}[htbp]
\centering
\begin{tikzpicture}[scale=0.6]
    \node (x) at (20:2) {$\boxed{y}$};
    \node (y) at (160:2) {$\boxed{x}$};
    \node (z) at (270:2) {$\boxed{z}$};

    \draw[->] (x) to[bend right] (y);
    \draw[->] (y) to[bend right] (x);
    \draw[->] (z) edge[loop above] (z);

    \node (a) at (-135:4) {$\boxed{a}$};
    \node (b) at (-90:4) {$\boxed{b}$};
    \node (c) at (-45:4) {$\boxed{c}$};

    \draw[->] (a) to[bend right] (b);
    \draw[->] (b) to[bend right] (c);

    \node[draw, fit=(x) (a) (b) (c), inner sep=0.3cm] {};
\end{tikzpicture}
\qquad
\begin{tikzpicture}[scale=0.6]
    \node (x) at (0:2) {$\boxed{x}$};
    \node (y) at (120:2) {$\boxed{y}$};
    \node (z) at (240:2) {$\boxed{z}$};

    \draw[->] (x) to[bend left] (z);
    \draw[->] (y) to[bend left] (x);
    \draw[->] (y) to[bend right] (z);

    \node[draw, fit=(x) (y) (z), inner sep=0.3cm] {};
\end{tikzpicture}
\caption{In the left box is an example of $G\in \hat{\mcb{G}}_{\C}(A,B,C)$ with $A=\{a\}$, $B=\{b,x,y,z\}$, and $C=\{c\}$, with arrows indicating edges.
In the right box is an example of a oriented multi-graph $G$ on $B=\{x,y,z\}$ which is \emph{not} an element of $\hat{\mcb{G}}_{\C}(\emptyset,B,\emptyset)$
because the vertex $z$ is incident to two incoming edges and $y$ is incident to two outgoing edges.}
\label{fig:Graph}
\end{figure}

It follows that every $G \in \hat{\mcb{G}}_{\C}(A,B,C)$ consists of a collection of oriented paths (with starting vertex in $A$ and ending vertex in $C$) and closed oriented loops (with all vertices in $B$), and every vertex is incident to either a single path or a single loop. 
We call such a graph an oriented contraction graph. 

We define $\mathcal{P}(G)$ to be the set of oriented paths in the graph $G$ and $\mathcal{L}(G)$ to be the set of oriented loops. 
\begin{remark}
To give some context and motivation, vertices in a contraction graph $G$ will later index paths in the original graph $\mbbG$.
An edge in $G$ will indicate concatenation and thus a path (resp. loop) in $G$ will correspond to a `path of paths' (resp. `loop of paths') in $\mbbG$ - see the discussion before Lemma~\ref{lem:full_loop_expansion}.
\end{remark}
Given a tuple of vectors $v = (v_{u}: u \in A \sqcup C)$ with $v_{u} \in H$ along with a tuple of linear operators $M = (M_{b} : b \in B)$ with $M_{b} \in L(H,H)$ we define,  for $\gamma \in \mathcal{P}(G)$ and $\ell \in \mathcal{L}(G)$, 
\begin{equ}\label{eq:path_loop_observables}
\gamma(M,v)
=
\bvscal{ v_{\underline{\gamma}}, \Big(\prod_{b \in \mathring{\gamma}} M_{b} \Big) v_{\overline{\gamma}}}
\quad
\text{and}
\quad
\ell(M) = \Trace \Big( \prod_{b \in \ell}  M_{b} \Big)\;.
\end{equ}
Above, given $\gamma \in \mathcal{P}(G)$, we write $\underline{\gamma} \in A$ for the starting vertex of $\gamma$ and $\overline{\gamma} \in C$ for the ending vertex. 
We write $\mathring{\gamma}$ for the (possibly empty) set of intermediate vertices of $B$ visited by $\gamma$, and the order in  $\prod_{b \in \mathring{\gamma}}$ is determined by $\gamma$. 
Similarly, given a loop $\ell \in \mathcal{L}(G)$ the product $\prod_{b \in \ell}$ is an ordered (up to cyclic permutation) product over all the vertices in $\ell$. 
Also recall that the $\big( \cdot, \cdot \big)$ appearing in the definition of $\gamma(M,v)$ denotes the inner product on $H$, in particular here $\gamma(\cdot,\cdot)$ and $\ell(\cdot)$ take values in $\mathbb{C}$. 
%
%
\begin{lemma}\label{lem:integrate_higgs_onesite}
With the notation introduced above, we have
\begin{equs}[eq:integrating_higgs_onesite_complex]
\int_{H} &
\Big( 
\prod_{a \in A} \vscal{v_{a},\varphi}
\Big)
\Big(
\prod_{b \in B} \vscal{\varphi, M_{b} \varphi}
\Big)
\Big(
\prod_{c \in C} \vscal{\varphi, v_{c}}
\Big)\; \mrd \rho^{\lambda}(\varphi) \\
{}&=
C^{\C,\lambda}_{|A| + |B|}
\sum_{G \in \hat{\mcb{G}}_{\C}(A,B,C)}
\prod_{\gamma \in \mathcal{P}(G)}
\gamma(M,v)
\prod_{\ell \in \mathcal{L}(G)}
\ell(M)\;,
\end{equs}
where, for $j \in \N$,
\[
C^{\C,\lambda}_{j} = K^{\C}_{j} \int_{0}^{\infty} s^{2j}\  \mrd \lambda(s)\;.
\]  
\end{lemma}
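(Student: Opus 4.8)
The plan is to reduce the identity to Lemma~\ref{lemma:spherical_integral} in two moves: first split $\rho^\lambda$ into its radial and angular parts, then turn each quadratic form $\vscal{\varphi,M_b\varphi}$ into a product of the linear forms to which Lemma~\ref{lemma:spherical_integral} applies, and finally match the resulting permutation sum with the sum over contraction graphs. Throughout I may assume $|A|=|C|$, since otherwise both sides vanish: the left by the Kronecker delta in Lemma~\ref{lemma:spherical_integral}, and the right because a graph in $\hat{\mcb{G}}_{\C}(A,B,C)$ has total out-degree $|A|+|B|$ and total in-degree $|B|+|C|$, so $\hat{\mcb{G}}_{\C}(A,B,C)=\emptyset$ when $|A|\neq|C|$.

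\emph{Radial reduction and linearisation.} Writing $\varphi=s\hat\varphi$ with $s\ge 0$, $\hat\varphi\in\hat H$, each factor becomes $\vscal{v_a,\varphi}=s\vscal{v_a,\hat\varphi}$, $\vscal{\varphi,v_c}=s\vscal{\hat\varphi,v_c}$ and $\vscal{\varphi,M_b\varphi}=s^2\vscal{\hat\varphi,M_b\hat\varphi}$, so the integrand carries the scalar $s^{|A|+2|B|+|C|}$. The better-than-Gaussian tail hypothesis on $\lambda$ together with compactness of $\hat H$ makes the integrand $\rho^\lambda$-integrable and justifies Fubini, so the left-hand side of~\eqref{eq:integrating_higgs_onesite_complex} equals $\big(\int_0^\infty s^{|A|+2|B|+|C|}\,\mrd\lambda(s)\big)$ times the angular integral of $\big(\prod_{a}\vscal{v_a,\hat\varphi}\big)\big(\prod_{b}\vscal{\hat\varphi,M_b\hat\varphi}\big)\big(\prod_{c}\vscal{\hat\varphi,v_c}\big)$ over $\hat H$. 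Fixing an orthonormal basis $(e_k)$ of $H$ and setting $(M_b)_{kl}=\vscal{e_k,M_b e_l}$, a double resolution of the identity gives $\vscal{\hat\varphi,M_b\hat\varphi}=\sum_{k_b,l_b}(M_b)_{k_b l_b}\vscal{\hat\varphi,e_{k_b}}\vscal{e_{l_b},\hat\varphi}$. Substituting this for every $b\in B$ and pulling the finite sum over $(k_b,l_b)_{b\in B}$ outside the angular integral, I can apply Lemma~\ref{lemma:spherical_integral} with the $a$-vectors $(v_a)_{a\in A}\sqcup(e_{l_b})_{b\in B}$ and the $b$-vectors $(e_{k_b})_{b\in B}\sqcup(v_c)_{c\in C}$, both families having cardinality $|A|+|B|=|B|+|C|=:j$. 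This produces the constant $K^{\C}_{j}$ and a sum over bijections $\sigma$ from $A\sqcup B$ onto $B\sqcup C$.

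\emph{From permutations to contraction graphs.} To each such $\sigma$ I attach the oriented graph $G_\sigma$ on $A\sqcup B\sqcup C$ with one edge $u\to\sigma(u)$ for every $u\in A\sqcup B$. Since $\sigma$ is a bijection, each vertex of $A$ has out-degree $1$ and in-degree $0$, each vertex of $C$ in-degree $1$ and out-degree $0$, and each vertex of $B$ exactly one incoming and one outgoing edge; thus $G_\sigma\in\hat{\mcb{G}}_{\C}(A,B,C)$, and $\sigma\mapsto G_\sigma$ is a bijection onto $\hat{\mcb{G}}_{\C}(A,B,C)$. Under this identification the pairing $\prod_i\vscal{a_i,b_{\sigma(i)}}$ from Lemma~\ref{lemma:spherical_integral}, for fixed $(k_b,l_b)$, is the product over the edges of $G_\sigma$ of $\vscal{v_a,e_{k_b}}$, $\vscal{e_{l_b},v_c}$, $\vscal{v_a,v_c}$, and $\vscal{e_{l_b},e_{k_b}}=\delta_{l_b k_b}$, according to whether the edge goes $a\to b$, $b\to c$, $a\to c$, or $b\to b$. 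Now summing over $(k_b,l_b)_{b\in B}$ weighted by $\prod_b(M_b)_{k_b l_b}$: since $G_\sigma$ is a disjoint union of oriented paths from $A$ to $C$ (with intermediate vertices in $B$) and oriented loops inside $B$, and since $\sum_k\vscal{w,e_k}\vscal{e_k,w'}=\vscal{w,w'}$, the index sums telescope along each path to $\bvscal{v_{\underline\gamma},\big(\prod_{b\in\mathring\gamma}M_b\big)v_{\overline\gamma}}=\gamma(M,v)$ and around each loop to $\Trace\big(\prod_{b\in\ell}M_b\big)=\ell(M)$, with the operator orders dictated by the orientations, exactly as in~\eqref{eq:path_loop_observables}. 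Hence the $\sigma$-term equals $\prod_{\gamma\in\mathcal{P}(G_\sigma)}\gamma(M,v)\prod_{\ell\in\mathcal{L}(G_\sigma)}\ell(M)$.

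Summing over $\sigma$ and recombining with the radial factor, and using $|A|=|C|$ to write $|A|+2|B|+|C|=2j$, the radial integral is $\int_0^\infty s^{2j}\,\mrd\lambda(s)$, so the prefactor becomes $K^{\C}_{j}\int_0^\infty s^{2j}\,\mrd\lambda(s)=C^{\C,\lambda}_{j}$ with $j=|A|+|B|$, which is precisely the claim. The only real work is the combinatorial bookkeeping of the last two steps — checking that the permutation sum of Lemma~\ref{lemma:spherical_integral} together with the operator indices $(k_b,l_b)$ reassembles into the sum over contraction graphs with the path and loop observables as weights, and tracking the orders of the operator products and the conjugations in the sesquilinear form; no analytic subtlety arises beyond the elementary integrability and Fubini step.
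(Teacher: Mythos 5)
Your proposal is correct and follows essentially the same route as the paper: radial/angular splitting of $\rho^\lambda$, expansion of the quadratic forms in an orthonormal basis, application of Lemma~\ref{lemma:spherical_integral}, identification of the bijections $\sigma\colon A\sqcup B\to B\sqcup C$ with graphs in $\hat{\mcb{G}}_{\C}(A,B,C)$, and telescoping of the basis indices along paths and loops. The only cosmetic difference is that you insert a double resolution of the identity (indices $k_b,l_b$ and matrix entries $(M_b)_{k_bl_b}$) where the paper uses a single index per $b\in B$ via $\vscal{\hat\varphi,M_b\hat\varphi}=\sum_{i_b}\vscal{e_{i_b},\hat\varphi}\vscal{\hat\varphi,M_be_{i_b}}$; both yield the same permutation sum and the same contractions.
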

\begin{proof}
We fix an orthonormal basis $(e_{i})_{i=1}^{k}$ of $H$.
Without loss of generality, we may assume that, for $u \in A \sqcup C$, $v_{u} = e_{i_{u}}$ for some $1 \le i _{u} \le k$. 
We can then write the left-hand side of \eqref{eq:integrating_higgs_onesite_complex}  as 
\begin{equs}
\sum_{ \substack{(i_{b}: b \in B)
\\
1 \le i_{b} \le k
}} &
\int_{0}^{\infty} 
\int_{\hat{H}}
s^{|A|+2|B| + |C|}
\prod_{a \in A} \vscal{e_{i_{a}},  \hat{\varphi} }
\prod_{b \in B} \vscal{e_{i_b} ,\hat{\varphi}}
\\
{}&
\qquad \qquad \times
\prod_{b \in B} \vscal{\hat{\varphi}, M_{b} e_{i_{b}}}
\prod_{c \in C} \vscal{ \hat{\varphi},  e_{i_{c}}}\
\mrd \hat{\varphi}\ 
\mrd \lambda(s)\;.
\end{equs}
Above, we wrote $\varphi = s \hat{\varphi}$ and decomposed the $\varphi$ integration into integration over $s$ and $\hat{\varphi}$. 
We also expanded each factor of $\vscal{ \hat{\phi}, M_{b} \hat{\phi}}$ using our orthonormal basis. 
 
We can then apply Lemma~\ref{lemma:spherical_integral} to perform the integral over $\hat{\varphi}$ which gives us
\begin{equs}
K^{\C}_{|A|+|B|}
\sum_{ \substack{\sigma: A \sqcup B \rightarrow B \sqcup C 
\\
\mathrm{bijection}}}
\Bigg[
\sum_{
\substack{
(i_{b}: b \in B)
\\
1 \le i_{b} \le k
}}
\prod_{u \in A \sqcup B} \vscal{e_{i_{u}} ,w_{\sigma(u)}}
\Bigg]\;,
\end{equs}
where $w_{b} = M_b e_{i_{b}}$ for $b\in B$ and $w_{c} = e_{i_{c}}$ for $c\in C$.

Note that the sum above is empty unless $|A| = |C|$. 
The bijections $\sigma\colon A \sqcup B \rightarrow B \sqcup C$ are in one-to-one correspondence with graphs in $\hat{\mcb{G}}_{\C}(A,B,C)$ - given $\sigma$ we define $G(\sigma)$ to have an edge going from $u \in A \sqcup B$ to $v \in B \sqcup C$ if and only if $\sigma(u) = v$. 
Upon fixing $\sigma$, the summand in brackets above factorises as a product indexed by paths in $\mathcal{P}(G(\sigma))$ and loops in $\mathcal{L}(G(\sigma))$. 

Given $\gamma \in \mathcal{P}(G(\sigma))$, if $\gamma$ visits the sites $a,b_{1},b_{2},\dots,b_{n},c$ in that order, then we get a corresponding factor
\begin{equs}
\sum_{i_{b_{1}}=1}^{k} \cdots \sum_{i_{b_{n}}=1}^{k}&
\vscal{e_{i_{a}},M_{b_{1}} e_{i_{b_{1}}}} 
\vscal{e_{i_{b_{1}}}, M_{b_{2}} e_{i_{b_{2}}}}
\cdots
\vscal{e_{i_{b_{n-1}}}, M_{b_{n}} e_{i_{b_{n}}}}
\vscal{e_{i_{b_{n}}},e_{i_{c}}}\\
{}&=
\vscal{e_{i_{a}},M_{b_1} \cdots M_{b_{n}} e_{i_{c}}}\;.
\end{equs}

Given $\ell \in \mathcal{L}(G(\sigma))$, if we freeze a basepoint $b_{1}$ so the loops visits $b_{1},\dots,b_{n}$, then we get a corresponding factor
\begin{equs}
\sum_{i_{b_{1}}=1}^{k} \cdots \sum_{i_{b_{n}}=1}^{k} &
\vscal{e_{i_{b_{1}}}, M_{b_{2}} e_{i_{b_{2}}}}
\vscal{e_{i_{b_{2}}}, M_{b_{3}} e_{i_{b_{3}}}}
\cdots
\vscal{e_{i_{b_{n-1}}}, M_{b_{n}} e_{i_{b_{n}}}}
\vscal{ e_{i_{b_{n}}}, M e_{i_{b_{1}}}}\\
{}& =
\Trace \big(  M_{b_{1}} \cdots M_{b_{n}} \big)\;.
\end{equs}
\end{proof}
\subsubsection{Typed oriented graph isomorphisms}\label{subsec:tog-iso}
It will be convenient to use a ``multiset'' formulation of Lemma~\ref{lem:integrate_higgs_onesite}.
\begin{notation}\label{not:multisets}
Given a set $\mcb{S}$, a multiset $\mathcal{A}$ of elements from $\mcb{S}$ is a collection of elements of $\mcb{S}$ where we allow elements to appear with multiplicity. 
The set of multisets of $\mcb{S}$ is then given by $\N^{\mcb{S}}$. 
Equivalently, $\mathcal{A} \in \N^{\mcb{S}}$ can be realized as an abstract set $A$ with a map $\iota: A \rightarrow \mcb{S}$ that associates to each element  $a \in A$ a choice of \textit{type} in $\iota(a) \in \mcb{S}$ with the constraint that for each $s \in \mcb{S}$, $| \{a \in A: \iota(a) = s\}| = \mathcal{A}_{s}$. 
We call $(A,\iota)$ a typed set.
We also often just write $\CA$ instead of $A$.

Given $\mathcal{A} = (\mathcal{A}_{s}:s \in \mcb{S}) \in \N^{\mcb{S}}$ we write $|\mathcal{A}| \eqdef \sum_{s \in \mcb{S}} |\mathcal{A}_{s}|$ for the size of $\mathcal{A}$ and $\mathcal{A}! = \prod_{s \in \mcb{S}} \mathcal{A}_{s}!$ . 
We will also write sums and products indexed by multisets, in particular a sum or product over $\mathcal{A} \in \N^{\mcb{S}}$ will be a sum or product of $|\mathcal{A}|$ summands or factors.  
We write $\multiset{\mcb{S}}$ for the collection of all finite multisets of elements of $\mcb{S}$, that is $\multiset{\mcb{S}} = \{ \mathcal{A} \in \N^{\mcb{S}}: |\mathcal{A}| < \infty\}$. 
Given $\mcb{S}' \supset \mcb{S}$, we view $\multiset{\mcb{S}'} \supset \multiset{\mcb{S}}$ by just imposing that any $\CA \in \multiset{\mcb{S}}$ has $\CA_{s'} = 0$ for every $s' \in \mcb{S}' \setminus \mcb{S}$. 
Finally, given $\mathcal{A}, \mathcal{B} \in \multiset{\mcb{S}}$, we define $\mathcal{A} \sqcup \mathcal{B} \in \multiset{\mcb{S}}$ by setting $( \mathcal{A} \sqcup \mathcal{B} )_{s} = \mathcal{A}_{s} + \mathcal{B}_{s}$ for each $s \in \mcb{S}$. 
\end{notation}

We will work with oriented graphs on multi-sets, which leads us to the notion of typed oriented graph. 

\begin{definition}
A typed oriented graph $G$ is an oriented graph on a typed set $(V,\iota)$. 
\end{definition}

We introduce disjoint sets of types $\mcb{A}$, $\mcb{B}$, $\mcb{C}$.

\begin{notation}\label{rem:graphs_on_multiset}
We now ``overload'' the notation $\hat{\mcb{G}}_{\C}(\cdot,\cdot,\cdot)$ that originally took sets as arguments to also take multi-sets as arguments.
For any $\mathcal{A} \in \multiset{\mcb{A}}$, $\mathcal{B} \in \multiset{\mcb{B}}$, and $\mathcal{C} \in \multiset{\mcb{C}}$, we  write $\hat{\mcb{G}}_{\C}(\mathcal{A},\mathcal{B},\mathcal{C})$ 
for the set of all typed oriented graphs with vertex set $\mathcal{A} \sqcup \mathcal{B} \sqcup \mathcal{C}$ where each vertex in $\mathcal{A}$ is incident only to a single outgoing edge, each vertex in $\mathcal{C}$
is incident only to a single incoming edge, and each vertex in $\mathcal{B}$ is incident to precisely one outgoing edge and one incoming edge (as before, we allow a vertex in $\mathcal{B}$ to have a self-loop). 

Recall that the multi-sets $\mathcal{A}$, $\mathcal{B}$, and $\mathcal{C}$ can be associated to some fixed abstract sets $A,B.C$ along with a type map $\iota$ on $A \sqcup B \sqcup C$ as described in Notation~\ref{not:multisets}. 
Once this has been done, the data constituting an element  $G \in \hat{\mcb{G}}_{\C}(\mathcal{A},\mathcal{B}, \mathcal{C})$ is simply an element of $ \tilde{G} \in \hat{\mcb{G}}_{\C}(A,B,C)$ along with the type map $\iota$ given above.  
\end{notation}

We will want to count symmetries of typed oriented graphs and have a notion of typed oriented graphs being identical at the level of types -- this motivates the definitions below. 

\begin{definition}
Given a typed set $(V,\iota)$, a type permutation of $(V,\iota)$ is a bijection $f\colon V \rightarrow V$ satisfying $\iota \circ f = \iota$. 
\end{definition}

Note that any bijection $f\colon V \rightarrow V$ induces a corresponding bijection $\mathbf{f}$ on the set $V \times V$ by setting $e = (\underline{e},\overline{e}) \mapsto \mathbf{f}(e) = (f(\underline{e}),f(\overline{e}))$. 

\begin{definition}
Given two typed oriented graphs $G_1, G_2 \subset V \times V$ on the typed vertex set $(V,\iota)$ we say that a type permutation $f$ is a tog\footnote{\ ``tog'' is just an abbreviation for typed oriented graph.}--isomorphism if the induced bijection $\mathbf{f}$ on $V \times V$ induced by $f$ maps $G_1$ to $G_2$.
If such a tog-isomorphism exists we call $G_1$ and $G_2$ tog-isomorphic. 

In the case where $G_1 = G_2= G$, we call $f$ a tog-automorphism of $G$. 
\end{definition}
Given $G \in \hat{\mcb{G}}_{\C}(A,B,C)$, we denote by $S(G)$ the number of tog-automorphisms on $G$. 
We also define $\mcb{G}_{\C}(\mathcal{A},\mathcal{B},\mathcal{C})$ to be the set of tog-isomorphism classes in $\hat{\mcb{G}}_{\C}(\mathcal{A},\mathcal{B},\mathcal{C})$.

\begin{remark}\label{rem:graph_overload}
Note that we often use the same character $G$ to either reference a ``concrete'' graph $G \in \hat{\mcb{G}}_{\C}(\mathcal{A},\mathcal{B},\mathcal{C})$ or an tog-isomorphism class of concrete graphs of $G \in   \mcb{G}_{\C}(\mathcal{A},\mathcal{B},\mathcal{C})$ --  in any discussion, which type of object we are working with should always be clear. 

We will make several (minor) overloads of notations defined for elements of $ \hat{\mcb{G}}_{\C}(\mathcal{A},\mathcal{B},\mathcal{C})$ and $ \mcb{G}_{\C}(\mathcal{A},\mathcal{B},\mathcal{C})$. 
For instance, for $G \in \hat{\mcb{G}}_{\C}(\mathcal{A},\mathcal{B},\mathcal{C})$ we defined $S(G)$ to be the number of tog-automorphisms of $G$. 
On the other hand, for $G \in \mcb{G}_{\C}(\mathcal{A},\mathcal{B},\mathcal{C})$ we define $S(G)=S(\hat{G})$ for any $\hat{G} \in  \hat{\mcb{G}}_{\C}(\mathcal{A},\mathcal{B},\mathcal{C})$ belonging to the tog-isomorphism class $G$ - it is easy to see this is indeed well-defined.
\end{remark} 
\begin{example}\label{ref:example_graphs}
Suppose $\mcb{A}=\mcb{C}=\emptyset$ (thus $\mathcal{A}=\mathcal{C}=\emptyset$) and
$\mcb{B}=\{1,2\}$.
\begin{enumerate}[label=(\roman*)]
\item \label{pt:1st_eg} Suppose $\mathcal{B}=\{x,y,z\}$ where $x,y,z \in \{1\}$ are of the same type.
Consider the graph $G\in \hat{\mcb{G}}_{\C}(\mathcal{A},\mathcal{B},\mathcal{C})$ consisting of the single oriented loop\footnote{ Identifying $G$ with its edge set, we would write $G =\{ (x,y),(y,z),(z,x)\}$.} $(xyz)$.
Then $S(G)=3$ since $x$ can be mapped to any of $x,y,z$ in a tog-automorphism, and this determines the tog-automorphism uniquely.
The total number of oriented graphs tog-isomorphic to $G$ is $2$, the other graph being the loop $(xzy)$.
See Figure~\ref{ref:fig1}.

\item \label{pt:2nd_eg} Suppose $\mathcal{B}=\{x,y,z\}$ but now $x\in \{1\}$ and $y,z\in\{2\}$. Consider $G$ as in~\ref{pt:1st_eg}. Then $S(G)=1$ and the total number of graphs tog-isomorphic to $G$ is again $2$.
See Figure~\ref{ref:fig2}.

\item \label{pt:3rd_eg} Consider $\mathcal{B}=\{w,x,y,z\}$ with $x,w\in\{1\}$ 
and $y,z\in \{2\}$.
Consider the graph $G\in \hat{\mcb{G}}_{\C}(\mathcal{A},\mathcal{B},\mathcal{C})$ consisting of the single oriented loop $(wyxz)$.
Then $S(G)=2$ because we can simultaneously interchange $w\leftrightarrow x$ and $y\leftrightarrow z$ in a tog-automorphism for $G$. 
The number of graphs tog-isomorphic to $G$ is $2$, with the second one being $(wzxy)$.
See Figure~\ref{ref:fig3}.

\item Consider the same setting as in~\ref{pt:3rd_eg} but now with the graph $G$ consisting of the single oriented loop $(wxyz)$.
Then $S(G)=1$ because any map that interchanges $w\leftrightarrow x$ (resp. $y\leftrightarrow z$) will change the oriented structure of the graph.
The number of graphs tog-isomorphic to $G$ is $4$, the other three being the loops
$(xwyz),(xwzy),(wxzy)$.
\end{enumerate}
\end{example}
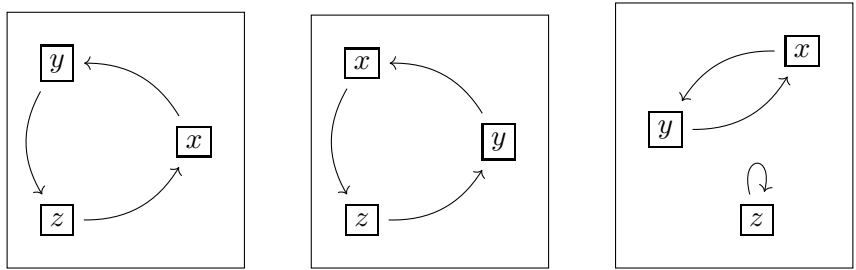
\begin{figure}[htbp]
\centering
\begin{tikzpicture}[scale=0.6] 
    \node (x) at (0:2) {$\boxed{x}$};
    \node (y) at (120:2) {$\boxed{y}$};
    \node (z) at (240:2) {$\boxed{z}$};

    \draw[->] (x) to[bend right] (y);
    \draw[->] (y) to[bend right] (z);
    \draw[->] (z) to[bend right] (x);
    \node[draw, fit=(x) (y) (z), inner sep=0.3cm] {};
\end{tikzpicture}
\qquad
\begin{tikzpicture}[scale=0.6]
    \node (y) at (0:2) {$\boxed{y}$};
    \node (x) at (120:2) {$\boxed{x}$};
    \node (z) at (240:2) {$\boxed{z}$};

    \draw[->] (y) to[bend right] (x);
    \draw[->] (x) to[bend right] (z);
    \draw[->] (z) to[bend right] (y);
    \node[draw, fit=(x) (y) (z), inner sep=0.3cm] {};
\end{tikzpicture}
\qquad
\begin{tikzpicture}[scale=0.6]
    \node (x) at (60:2) {$\boxed{x}$};
    \node (y) at (180:2) {$\boxed{y}$};
    \node (z) at (270:2) {$\boxed{z}$};

    \draw[->] (x) to[bend right] (y);
    \draw[->] (y) to[bend right] (x);
    \draw[->] (z) edge[loop above] (z);
    \node[draw, fit=(x) (y) (z), inner sep=0.3cm] {};
\end{tikzpicture}
\caption{In the first box we draw $G$ from Example~\ref{ref:example_graphs}\ref{pt:1st_eg}. 
We use shapes around vertices to indicate type, the squares indicating all vertices are type $1$. 
Here $S(G)=3$, the tog-automorphisms arise from cyclic permutations on $xyz$.  
The second graph $\tilde{G}$ is distinct from $G$, but tog-isomorphic to $G$ - they are indistinguishable after removing Roman letter labels on vertices and keeping only the squares.  
The third graph $\bar{G}$ is not tog-isomorphic to either $G$ or $\tilde{G}$.}
\label{ref:fig1}
\end{figure}

\begin{figure}[htbp]
\centering
\begin{tikzpicture}[scale=0.6] 
    \node (x) at (0:2) {$\boxed{x}$};
    \node (y) at (120:2) {$\begin{tikzpicture}[baseline=(y.base), scale=0.5]
        \node (y) [draw, regular polygon, regular polygon sides=3, inner sep=0.6pt] {$y$};
        \end{tikzpicture}$};
    \node (z) at (240:2) {$\begin{tikzpicture}[baseline=(z.base), scale=0.5]
        \node (z) [draw, regular polygon, regular polygon sides=3, inner sep=1pt] {$z$};
        \end{tikzpicture}$};

    \draw[->] (x) to[bend right] (y);
    \draw[->] (y) to[bend right] (z);
    \draw[->] (z) to[bend right] (x);
    \node[draw, fit=(x) (y) (z), inner sep=0.3cm] {};
\end{tikzpicture}
\qquad
\begin{tikzpicture}[scale=0.6]
    \node (y) at (0:2) {$\begin{tikzpicture}[baseline=(y.base), scale=0.5]
        \node (y) [draw, regular polygon, regular polygon sides=3, inner sep=0.6pt] {$y$};
        \end{tikzpicture}$};
    \node (x) at (120:2) {$\boxed{x}$};
    \node (z) at (240:2) {$\begin{tikzpicture}[baseline=(z.base), scale=0.5]
        \node (z) [draw, regular polygon, regular polygon sides=3, inner sep=1pt] {$z$};
        \end{tikzpicture}$};

    \draw[->] (y) to[bend right] (x);
    \draw[->] (x) to[bend right] (z);
    \draw[->] (z) to[bend right] (y);
    \node[draw, fit=(x) (y) (z), inner sep=0.3cm] {};
\end{tikzpicture}
\caption{In the left box we draw $G$ from Example~\ref{ref:example_graphs}\ref{pt:2nd_eg}. 
We use shapes at the vertices to indicate type, with squares for type $1$ and triangles for type $2$. 
The only allowable type permutation on vertices swaps $y \leftrightarrow z$ and leaves $x$ fixed - this swap gives the graph $\bar{G}$ on the right
which is distinct from $G$ and it follows that $S(G) = 1$.
$G$ and $\bar{G}$ are tog-isomorphic which can be seen by dropping letters and only keeping the squares and triangles. 
}
%
\label{ref:fig2}
\end{figure}
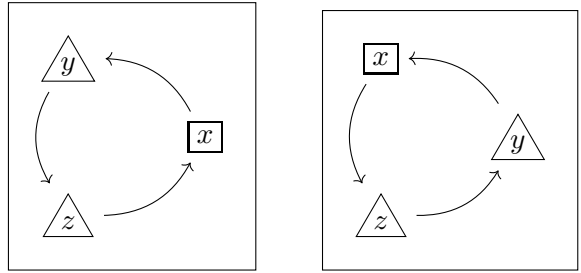

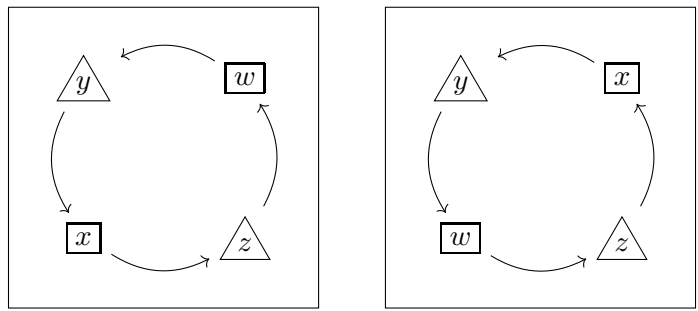
\begin{figure}[htbp]
\centering
\begin{tikzpicture}[scale=0.6]
    \node (w) at (45:2.5) {$\boxed{w}$};
    \node (y) at (135:2.5) {$\begin{tikzpicture}[baseline=(y.base), scale=0.5]
        \node (y) [draw, regular polygon, regular polygon sides=3, inner sep=0.6pt] {$y$};
        \end{tikzpicture}$};
    \node (x) at (225:2.5) {$\boxed{x}$};
    \node (z) at (315:2.5) {$\begin{tikzpicture}[baseline=(z.base), scale=0.5]
        \node (z) [draw, regular polygon, regular polygon sides=3, inner sep=1pt] {$z$};
        \end{tikzpicture}$};

    \draw[->] (w) to[bend right] (y);
    \draw[->] (y) to[bend right] (x);
    \draw[->] (x) to[bend right] (z);
    \draw[->] (z) to[bend right] (w);

    \node[draw, fit=(w) (y) (x) (z), inner sep=0.5cm] {};
\end{tikzpicture}
\qquad
\begin{tikzpicture}[scale=0.6]
    \node (x) at (45:2.5) {$\boxed{x}$};
    \node (y) at (135:2.5) {$\begin{tikzpicture}[baseline=(y.base), scale=0.5]
        \node (y) [draw, regular polygon, regular polygon sides=3, inner sep=0.6pt] {$y$};
        \end{tikzpicture}$};
    \node (w) at (225:2.5) {$\boxed{w}$};
    \node (z) at (315:2.5) {$\begin{tikzpicture}[baseline=(z.base), scale=0.5]
        \node (z) [draw, regular polygon, regular polygon sides=3, inner sep=1pt] {$z$};
        \end{tikzpicture}$};

    \draw[->] (x) to[bend right] (y);
    \draw[->] (y) to[bend right] (w);
    \draw[->] (w) to[bend right] (z);
    \draw[->] (z) to[bend right] (x);

    \node[draw, fit=(w) (y) (x) (z), inner sep=0.5cm] {};
\end{tikzpicture}
\caption{In the left box we draw $G$ from Example~\ref{ref:example_graphs}\ref{pt:3rd_eg}, again drawing squares for type $1$ and triangles for type $2$. 
On the right, we draw the only graph distinct from $G$ which is tog-isomorphic to $G$. }
\label{ref:fig3}
\end{figure}

Given $G \in \mcb{G}_{\C}(\mathcal{A},\mathcal{B},\mathcal{C})$, we have a corresponding notion of tog-isomorphism class of paths and loops in $G$. 
Since paths in $G$ cannot intersect themselves, a tog-isomorphism class of a path in $G$ is just a finite sequence of types of vertices, with the first type taken from $\mcb{A}$, the final type taken from $\mcb{C}$, and the intermediate ones taken from $\mcb{B}$. 
Since loops in $G$ are simple, a tog-isomorphism class of a loop in $G$ is an equivalence class of finite sequences of elements of $\mcb{B}$ where one views finite sequences as equivalent if they differ by a cyclic permutation (this is because loops do not have distinguished basepoints).
We will call a tog-isomorphism class of a path (resp. loop) a type of path (resp. loop). 
We write $\mathcal{P}(G)$ to be the multiset of types of paths in $G$ and $\mathcal{L}(G)$ the multiset of types of loops in $G$ -- note that here we are performing an overload of notation as mentioned in Remark~\ref{rem:graph_overload} as we already used this notation in a different context when instead $G \in \hat{\mcb{G}}_{\C}(\mathcal{A},\mathcal{B},\mathcal{C})$ --
see the discussion following Definition~\ref{def:OG}.

Since such types of loops $\ell$ or paths $\gamma$ can themselves be seen as tog-isomorphism classes of oriented contraction graphs, they are well-defined without needing to make reference to a larger graph $G$. 
Moreover, it makes sense to write $S(\ell)$ and $S(\gamma)$. 
In particular, for any path  $\gamma$ one has $S(\gamma) = 1$ - since $\gamma$ is an oriented path with initial vertex in $\mathcal{A}$ and final vertex in $\mathcal{C}$, its vertices come with a total order which forces $S(\gamma)=1$.
\begin{remark}\label{rem:automorphisms of loops}
In general, for any oriented loop $\ell$ consisting of $n$ vertices, the automorphism group of $\ell$ is a subgroup of the cyclic group $C_n$.

Calling the vertices $\{x_{0},\dots,x_{n-1}\}$ and writing the loop $(x_{0} \cdots x_{n-1})$ with basepoint $x_0$,
each non-trivial automorphism corresponds to a choice of new basepoint $x_j$ such that the new sequence of types that appears in our representation of the loop agrees with the old one, that is for $0 \le k \le n-1$, the type of $x_k$ is the same as that of $x_{k + j \mod n}$.
In particular, if all the vertices are of the same type, then the tog-automorphism group is the full cyclic group $C_n$.
\end{remark} 
An $\mcb{A} \sqcup \mcb{C}$ vector assignment is a tuple $v = (v_{u}: u \in \mcb{A} \sqcup \mcb{C})$ with $v_{u} \in H$, while 
a $\mcb{B}$ operator assignment is a tuple $M = (M_{b} : b \in \mcb{B})$ with $M_{b} \in L(H,H)$.
For $G \in \mcb{G}_{\C}(\mathcal{A},\mathcal{B},\mathcal{C})$
and any $\gamma \in \mathcal{P}(G)$ and $\ell \in \mathcal{L}(G)$, we define, for assignments as given above, $\gamma(M,v)$ and $\ell(M)$ as in~\eqref{eq:path_loop_observables} where the components of  $v$ or $M$ are chosen based on type. 
Observe that $\gamma(M,v)$, $\ell(M)$, and $S(\ell)$ depend only on the type of $\gamma$ and $\ell$.
We also write \[
\hat{\ell}(M) \eqdef \ell(M)/S(\ell)\;.
\]

We can then reformulate Lemma~\ref{lem:integrate_higgs_onesite} as the following. 
\begin{lemma}\label{lemma:integrating_onesite_multiset}
With the notation as given above, for any $\mcb{A} \sqcup \mcb{B}$ vector assignment $v$ and $\mcb{B}$ operator assignment $M$, 
\begin{equs}
{}&\frac{1}{\mathcal{A}!\mathcal{B}!\mathcal{C}!}
\int_{H} 
\Big( 
\prod_{a \in \mathcal{A}} \vscal{v_{a},\phi}
\Big)
\Big(
\prod_{b \in \mathcal{B}} \vscal{\phi, M_{b} \phi}
\Big)
\Big(
\prod_{c \in \mathcal{C}} \vscal{\phi, v_{c}}
\Big)\ 
\mrd \rho^{\lambda}(\phi)\\
{}&=
C^{\C,\lambda}_{|\mathcal{A}| + |\mathcal{B}|}
\sum_{G \in \mcb{G}_{\C}(\mathcal{A},\mathcal{B},\mathcal{C})}
\frac{1}{\mathcal{P}(G)! \mathcal{L}(G)!}
\prod_{\gamma \in \mathcal{P}(G)}
\gamma(M,v)
\prod_{\ell \in \mathcal{L}(G)}
\hat{\ell}(M)\;.
\end{equs}	
\end{lemma}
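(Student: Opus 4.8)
The goal is to derive Lemma~\ref{lemma:integrating_onesite_multiset} from Lemma~\ref{lem:integrate_higgs_onesite} purely as a combinatorial bookkeeping statement, passing from the category of graphs on labelled vertices $\hat{\mcb{G}}_{\C}(A,B,C)$ to the category of isomorphism classes $\mcb{G}_{\C}(\mathcal{A},\mathcal{B},\mathcal{C})$.

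\textbf{Setup.} The plan is to fix, for each multiset $\mathcal{A}\in\multiset{\mcb{A}}$, $\mathcal{B}\in\multiset{\mcb{B}}$, $\mathcal{C}\in\multiset{\mcb{C}}$, a realization as a genuine set with a typing map $\iota$, and to apply Lemma~\ref{lem:integrate_higgs_onesite} with $A=\mathcal{A}$, $B=\mathcal{B}$, $C=\mathcal{C}$ and with the vectors/operators $v_u = v_{\iota(u)}$, $M_b = M_{\iota(b)}$ chosen according to type. The key point is that both sides of \eqref{eq:integrating_higgs_onesite_complex} are invariant under the action of the group $\mathfrak{S}_{\mathcal{A}}\times\mathfrak{S}_{\mathcal{B}}\times\mathfrak{S}_{\mathcal{C}}$ of type-preserving permutations of the labelled vertex set (on the left because the integrand only depends on the multiset of types, on the right because relabelling a contraction graph by a type-preserving permutation gives an isomorphic graph with the same value $\prod_\gamma\gamma(M,v)\prod_\ell\ell(M)$). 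Dividing by $\mathcal{A}!\mathcal{B}!\mathcal{C}! = |\mathfrak{S}_{\mathcal{A}}\times\mathfrak{S}_{\mathcal{B}}\times\mathfrak{S}_{\mathcal{C}}|$ therefore replaces the sum over $\hat{\mcb{G}}_{\C}(\mathcal{A},\mathcal{B},\mathcal{C})$ by a sum over orbits, i.e. over isomorphism classes $G\in\mcb{G}_{\C}(\mathcal{A},\mathcal{B},\mathcal{C})$, weighted by $\frac{|\hat G|}{\mathcal{A}!\mathcal{B}!\mathcal{C}!}$ where $|\hat G|$ is the orbit size. By orbit--stabilizer, $|\hat G| = \mathcal{A}!\mathcal{B}!\mathcal{C}!/S(G)$, so the weight is exactly $1/S(G)$.

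\textbf{Factorizing the automorphism count.} The remaining work is to check that $1/S(G)$ matches $\frac{1}{\mathcal{P}(G)!\,\mathcal{L}(G)!}\prod_{\ell\in\mathcal{L}(G)}\frac{1}{S(\ell)}$, using that $\gamma(M,v)$, $\ell(M)$ are type-data and that $S(\gamma)=1$ for paths. Here I would argue that any type-preserving automorphism of a (labelled representative of) $G$ must permute the connected components among themselves, can only send a path-component to a path-component of the same path-type and a loop-component to a loop-component of the same loop-type, and within a fixed component acts either trivially (on a path, since its vertices are totally ordered from the $\mcb{A}$-end to the $\mcb{C}$-end) or by an element of the cyclic automorphism group of that loop (Remark~\ref{rem:automorphisms of loops}). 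Hence $S(G) = \big(\prod_{\text{path-types }\gamma}(\text{mult. of }\gamma)!\big)\cdot\big(\prod_{\text{loop-types }\ell}(\text{mult. of }\ell)!\,S(\ell)^{\text{mult. of }\ell}\big)$, which is precisely $\mathcal{P}(G)!\,\mathcal{L}(G)!\,\prod_{\ell\in\mathcal{L}(G)}S(\ell)$ in the multiset notation of Notation~\ref{not:multisets}. Combining with $\hat\ell(M)=\ell(M)/S(\ell)$ and $\prod_{\gamma\in\mathcal{P}(G)}\gamma(M,v)$ (unchanged since $S(\gamma)=1$) gives the claimed identity. The constant $C^{\C,\lambda}_{|\mathcal{A}|+|\mathcal{B}|}$ carries over unchanged since $|\mathcal{A}|+|\mathcal{B}|$ is well-defined on multisets.

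\textbf{Main obstacle.} The routine parts are the invariance of \eqref{eq:integrating_higgs_onesite_complex} under type-preserving relabelling and the orbit--stabilizer count. The one step needing genuine care is the factorization of $S(G)$: one must be careful that automorphisms are allowed to permute components of equal type (contributing the $\mathcal{P}(G)!$ and $\mathcal{L}(G)!$ factors) \emph{as well as} act nontrivially within a loop-component (contributing $S(\ell)$), and that these two mechanisms are independent so the counts multiply; and one must double-check that no automorphism can mix path-components with loop-components or change orientation, which is where the oriented-graph structure and the $\mcb{A}/\mcb{B}/\mcb{C}$ typing are used. The real-case analogue (needed later) will require the same argument with the obvious modifications, but that is not part of this statement.
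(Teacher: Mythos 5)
Your proposal is correct and follows essentially the same route as the paper: the paper's proof likewise counts the number of representatives of each isomorphism class $G$ in $\hat{\mcb{G}}_{\C}(\mathcal{A},\mathcal{B},\mathcal{C})$ as $\mathcal{A}!\mathcal{B}!\mathcal{C}!/(\mathcal{P}(G)!\mathcal{L}(G)!\prod_{\ell}S(\ell))$ and then invokes Lemma~\ref{lem:integrate_higgs_onesite}, which is exactly your orbit--stabilizer count combined with the factorization $S(G)=\mathcal{P}(G)!\,\mathcal{L}(G)!\,\prod_{\ell\in\mathcal{L}(G)}S(\ell)$. Your write-up is in fact somewhat more explicit than the paper's about why the automorphism group factorizes this way.
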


\begin{proof}
We first claim that every  $G\in\mcb{G}_\C(\mcA,\mcB,\mcC)$ has
\[
\frac{\mcA!\mcB!\mcC!}
{\mcP(G)!\mcL(G)! \prod_{\ell \in \mcL(G)} S(\ell)}
\]
representatives in $\hat{\mcb{G}}_\C(\mcA,\mcB,\mcC)$. 
The factor $\mcA!\mcB!\mcC!$ comes from going from distinguishable to indistinguishable vertices, but this is an overcounting that is corrected by dividing by the number of permutations of paths and loops of the same type and then the number of automorphisms within each loop and path. 
The conclusion then follows by~\eqref{eq:integrating_higgs_onesite_complex}.
\end{proof}

\subsubsection{Lattice paths and loops in the complex case}

Returning to our graph $\mathbb{G}$, we define $\mcb{P}_{\C}$ to be the set of all lattice\footnote{Here and in the real case of Section~\ref{subsec:real_case}, we use the word `lattice' to distinguish paths and loops that traverse $\mbbG$ rather than the loops and paths coming from the application of Lemmas~\ref{lem:integrate_higgs_onesite} and~\ref{lem:integrate_higgs_onesite_real}.} paths. 
In this subsection we enforce that
\begin{enumerate}[label=(\roman*)]
\item every lattice path contains at least one edge (and edges can be traversed multiple times),
\item lattice paths are oriented and they must traverse each edge in the way that respects its orientation, and
\item any lattice path consisting of more than one edge cannot start or end with an element of $\mathbb{E}^{o}$ (a self-loop edge). 
\end{enumerate}
Concretely, we let $\widehat{\mcb{P}}_{\C}$ be the set of all non-empty finite sequences $(e_{1},\dots,e_{n})$ of elements of $\mathbb{E}$ such that, for $1 \le i  \le n-1$ one has $\overline{e_{i}} = \underline{e_{i+1}}$.
We then set $\mcb{P}_{\C}$ to be the set all of sequences in $\widehat{\mcb{P}}_{\C}$ that are either of length $1$ or  with the property that they neither start nor end with an element of $\mathbb{E}^{o}$.
Any $\gamma\in \widehat{\mcb{P}}_\C$ has starting and ending vertices denoted $\underline{\gamma}, \overline{\gamma} \in  \mbbV$ respectively.

We define $\mcb{L}_{\C}$ to be the set of all lattice loops.
In this subsection
\begin{enumerate}[label=(\roman*)]
\item every lattice loop consists of at least one edge (and edges can be traversed multiple times),
\item lattice loops do not have a distinguished basepoint, and
\item lattice loops are oriented, in particular they traverse edges in $\mbbG$  respecting their orientation. 
\end{enumerate}
Concretely, $\mcb{L}_{\C}$ is the set of equivalence classes of $\widetilde{\mcb{P}}_{\C} =  \{ \gamma \in \widehat{\mcb{P}}_{\C}: \underline{\gamma} = \overline{\gamma}\}$ when one quotients by cyclic permutations.

\begin{remark}\label{rem:self_loop}
A self-loop  $e \in \mbbE^{o}$ determines a lattice path $\gamma = (e) \in \mcb{P}_{\C}$ of length $1$ and additionally, for all $n \ge 1$, we have a  
corresponding lattice loop $\ell$ of length $n$ which is given by an equivalence class with a single element, that is $\ell = \{ \underbrace{(e,\dots,e)}_{n \text{ times}} \}$. 
We treat the above lattice path and lattice loop of length $1$ as distinct. 
Writing $\underline{e} = \overline{e} = a \in \mbbV$, in terms of the integrand in Lemma~\ref{lem:full_loop_expansion} below,
the lattice path above corresponds to a factor $\gamma(M,\phi) = (\phi_{a},M_{e} \phi_{a})$ whereas the lattice loop of length $n$ corresponds to a factor of $\ell(M) = \Trace(M_{e}^n)$.
\end{remark}

We now describe how to define symmetry factors $S(\gamma)$ and $S(\ell)$ for $\gamma \in \widehat{\mcb{P}}_{\C}$ and $\ell \in \mcb{L}_{\C}$ analogous to those given for paths and loops in graphs of $\hat{\mcb{G}}_{\C}(\mathcal{A},\mathcal{B},\mathcal{C})$ earlier. 
To do this, we associate to each $\eta \in \widehat{\mcb{P}}_{\C} \sqcup \mcb{L}_{\C}$ of length $n \ge 1$ a typed oriented graph $\mcb{g}(\eta)$ on the vertices $V = \{1,\dots, n\}$ where each vertex $v \in V$ is given a type  $\iota(i) \in \mbbE$.
We then set $S(\eta)$ to be the number of tog-automorphisms on $\mcb{g}(\eta)$ that preserve the types of vertices.

For $\gamma = (e_{1},\dots,e_{n}) \in \mcb{P}_{\C}$, we take $\mcb{g}(\gamma)$ to have oriented edges $\{(i,i+1)\}_{i=1}^{n-1}$ and assign, for each $1 \le i \le n$, the type of $i$ to be $s(i) = e_{i}$.
Note that we always have $S(\gamma) \eqdef S(\mcb{g}(\gamma))= 1$. 
 
For $\ell \in \mcb{L}_{\C}$, we choose some representative $ \widetilde{\mcb{P}}_{\C} \ni \gamma \in \ell$ in the equivalence class and define $\mcb{g}(\ell)$ to be the typed oriented graph obtained by adding to the oriented graph $\mcb{g}(\gamma)$ the oriented edge $(n,1)$. 
Note that the resulting value of $S(\ell)\eqdef S(\mcb{g}(\ell))$ is well-defined as it is independent of our choice of $\gamma$.

Note that the group of tog-automorphisms of $\ell$ is a subgroup of $C_n$ (cf.~Remark~\ref{rem:automorphisms of loops})
and it is possible that $S(\ell) \not = 1$. 
As an example, suppose we fix $\ell \in \mcb{L}_{\C}$ that contains $\gamma = (e_1,e_2,e_3,e_4,e_5,e_6) \in \widetilde{\mcb{P}}_{\C}$ with $e_1=e_{4}=(x,y) \in \mathbb{E}$, $e_2=e_5=(y,z) \in \mathbb{E}$, and $e_3=e_6=(z,x) \in \mathbb{E}$, where $x,y,z$ are distinct vertices.
Then $S(\ell) = 2$: one has the trivial tog-automorphism and the tog-automorphism that interchanges $e_{i} \leftrightarrow e_{i + 3}$ for $i \in \{1,2,3\}$.

\subsubsection{Loop expansion in complex case}

Recalling Notation~\ref{not:multisets}, we define $\mathbf{G}_{\C} = \multiset{\mcb{P}_{\C} \sqcup \mcb{L}_{\C}}$ to be the set of all finite multisets of lattice paths and loops. 
Given $G \in \mathbf{G}_{\C}$, we write\footnote{Note that we are doing a third overload of notation when writing $\mathcal{P}(G)$ and $\mathcal{L}(G)$, but point out the meaning of the notation is clear upon remembering whether $G \in \hat{\mcb{G}
}_{\C}(\mathcal{A},\mathcal{B},\mathcal{C})$, $G \in  \mcb{G}_{\C}(\mathcal{A},\mathcal{B},\mathcal{C})$ or $G \in \mathbf{G}_{\C}$.} $\mathcal{P}(G) \in \multiset{\mcb{P}_{\C}}$ for the multiset of paths in $G$ and $\mathcal{L}(G) \in \multiset{\mcb{L}_{\C}}$ for the multiset of loops in $G$. 
In particular, $G = \mathcal{P}(G) \sqcup \mathcal{L}(G)$. 

For $G \in \mathbf{G}_{\C}$ and $x \in \mbbV$, we write $G_{x} \in \N$ for the number of incidences $x$ has with edges in $G$ with multiplicity,
that is
\begin{equ}[eq:def_G_x]
G_{x} = 
\Bigg( 
\sum_{\gamma \in \mathcal{P}(G)} 
\sum_{e \in \gamma} 
+
\sum_{\ell \in \mathcal{L}(G)} 
\sum_{e \in \ell}
\Bigg) 
\big( \mathbf{1}\{\underline{e} = x\} +  \mathbf{1}\{\overline{e} = x\}\big)\;.
\end{equ}
Note that $G_x$ is well-defined in the sense that it is independent of the representative $\ell$ taken above for each element in $\mcL(G)$.
Note also that if $x$ is not the endpoint of any path in $G$, then we must have $G_{x} \in 2 \N$. 

\begin{definition}
An $\mathbb{E}$ operator assignment is a family of operators $M = (M_{e})_{e \in \mbbE}$ with $M_{e} \in L(H,H)$. 
\end{definition}
Our main result on a loop expansion in the complex case is the following. 

\begin{theorem}\label{thm:loop_expansion} 
For any $\mathbb{E}$ operator assignment $M$, 
\begin{equs}
Z(M) &\eqdef 
\int_{H^{\mbbV}}
\exp\Big(
\sum_{e \in \mbbE} \vscal{\phi_{\underline{e}},M_{e}\phi_{\overline{e}}}
 \Big)\prod_{x\in\mbbV} \mrd \rho^{\lambda_x}(\phi_x)\\
{}& \quad =
\sum_{\mathcal{L} \in \mathbf{L}_{\C}}
\frac{1}{\mathcal{L}!}
\Big(
\prod_{x \in \mbbV}
C^{\C,\lambda_{x}}_{\mathcal{L}_{x}/2}
\Big)
\prod_{\ell \in \mathcal{L}}
\hat{\ell}(M)\;,
\end{equs}
where $\mathbf{L}_{\C} = \multiset{\mcb{L}_{\C}}$ and $\hat{\ell}(M) =  \Trace \Big( \prod_{e \in \ell} M_{e} \Big)/S(\ell)$ with the product over edges ordered according to the orientation of the lattice loop.
\end{theorem}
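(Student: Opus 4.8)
The plan is to expand the exponential on the left-hand side into a sum over multisets of edges, integrate the Higgs field out one vertex at a time using Lemma~\ref{lemma:integrating_onesite_multiset}, and then recognise the resulting combinatorial sum as the right-hand side by an induction on $|\mbbV|$. First I would write
\[
\exp\Big(\sum_{e\in\mbbE}\vscal{\phi_{\underline e},M_e\phi_{\overline e}}\Big)
=\sum_{\mcA\in\multiset{\mbbE}}\frac{1}{\mcA!}\prod_{e\in\mcA}\vscal{\phi_{\underline e},M_e\phi_{\overline e}}\;,
\]
which is justified because the radial measures $\lambda_x$ have better-than-Gaussian tails, so the integral and the sum may be interchanged (this is exactly the integrability hypothesis flagged after the definition of $\mrd\rho^\lambda$). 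For a fixed multiset $\mcA$ of edges, the integrand factorises over vertices: for each $x\in\mbbV$ the relevant factor is a product of $\vscal{\phi_x,\cdot}$, $\vscal{\cdot,\phi_x}$, and $\vscal{\phi_x,M\phi_x}$ terms according to how the edges of $\mcA$ meet $x$ (an edge $e$ with $\underline e=\overline e=x$ contributes a $\vscal{\phi_x,M_e\phi_x}$, an edge with only $\underline e=x$ contributes a $\vscal{\phi_x,M_e\phi_{\overline e}}$ factor in which $\phi_{\overline e}$ plays the role of a "vector assignment", etc.).

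The core of the argument is the induction. Order the vertices $x_1,\dots,x_m$ and integrate out $\phi_{x_1}$ first. Lemma~\ref{lemma:integrating_onesite_multiset} applies with $\mcb A$ = incoming half-edges at $x_1$, $\mcb B$ = self-loop edges at $x_1$, $\mcb C$ = outgoing half-edges at $x_1$, and the vector/operator assignments read off from $M$ and the remaining fields $\phi_{x_j}$, $j\ge 2$. The output is $C^{\C,\lambda_{x_1}}_{(\text{number of half-edges at }x_1)/2}$ times a sum over contraction graphs $G$ at $x_1$; each such $G$ glues the lattice-edge-fragments incident to $x_1$ into longer lattice paths and closed lattice loops. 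Crucially, the "paths" $\gamma\in\mcP(G)$ that still have free endpoints in $\mcb A\sqcup\mcb C$ correspond to partially-built lattice paths whose endpoints lie among $x_2,\dots,x_m$, so they contribute new $\vscal{\phi_{\underline\gamma},(\prod M)\phi_{\overline\gamma}}$ factors — exactly of the form appearing in the integrand for the next vertex — while the loops $\ell\in\mcL(G)$ contribute closed $\hat\ell(M)$ factors and drop out of the remaining integration. Thus after integrating $\phi_{x_1}$ we are left with a structurally identical problem on $\{x_2,\dots,x_m\}$ but with $M$ replaced by the enlarged family of concatenated "edges" (the partial lattice paths), plus a prefactor collecting $C^{\C,\lambda_{x_1}}_{\mcL_{x_1}/2}$ and the loop factors that have already closed up at $x_1$. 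Iterating, after all vertices are integrated out the only surviving terms are closed lattice loops, giving $\sum_{\mcL\in\mathbf L_\C}\frac1{\mcL!}\big(\prod_x C^{\C,\lambda_x}_{\mcL_x/2}\big)\prod_{\ell\in\mcL}\hat\ell(M)$.

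The main obstacle is bookkeeping of symmetry factors: one must check that the $1/\mcP(G)!$, $1/\mcL(G)!$, and $1/S(\ell)$ factors produced at each vertex by Lemma~\ref{lemma:integrating_onesite_multiset}, together with the $1/\mcA!$ from the exponential expansion, combine correctly across the induction to yield precisely the single global factor $1/\mcL!$ together with one $1/S(\ell)$ per loop in the final answer — i.e. that there is no over- or under-counting when a lattice loop is assembled from fragments contributed by several different vertices in different orders. The clean way to handle this is to argue that the left-hand side is manifestly independent of the chosen vertex ordering and then to count, for a fixed terminal multiset $\mcL$ of lattice loops, in how many ways it arises: this reduces to the statement (already used in the proof of Lemma~\ref{lemma:integrating_onesite_multiset}) that a loop of $n$ typed vertices built by successive single-vertex contractions is produced with total multiplicity $n/S(\ell)$ relative to the naive count, and these factors telescope against the $\mcA!$ in the denominator. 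I would isolate this counting as a separate combinatorial lemma (or invoke Remark~\ref{rem:automorphisms of loops}) so that the inductive step itself becomes a formal rewriting. The remaining details — that $G_x\in 2\N$ when $x$ is not a path endpoint, so $C^{\C,\lambda_x}_{\mcL_x/2}$ makes sense, and that the interchange of sum and integral survives each step — are routine given the tail assumption on $\lambda_x$.
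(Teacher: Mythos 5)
Your proposal is correct and follows essentially the same route as the paper: expanding the exponential as the base case, then integrating out one vertex at a time via Lemma~\ref{lemma:integrating_onesite_multiset} used as a re-routing lemma, with partial lattice paths playing the role of the types $\mcb{A},\mcb{B},\mcb{C}$ and the symmetry factors matched through the concatenation map (this is exactly Lemma~\ref{lem:full_loop_expansion}, from which the theorem follows by taking $\bar{\mbbV}=\mbbV$). The combinatorial bookkeeping you flag as the main obstacle is resolved in the paper by exhibiting a bijection between contraction-graph isomorphism classes and the resulting multisets of lattice paths and loops, together with the observation that the symmetry factor of a loop of paths equals that of the concatenated lattice loop.
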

Note that, for $\mathcal{L}_{x}\in \mathbf{L}_{\C}\subset \mathbf{G}_{\C}$ we always have $\mathcal{L}_{x} \in 2 \N$.
We will prove Theorem~\ref{thm:loop_expansion} using induction where we iteratively delete/integrate out sites in $\mbbV$.

Given $\bar{\mbbV} \subset \mbbV$, we define $\mathbf{G}_{\C}(\bar{\mbbV}) = \multiset{\mcb{P}_{\C}(\bar{\mbbV}) \sqcup \mcb{L}_{\C}(\bar{\mbbV})}\subset \mathbf{G}_{\C}$ where 
\begin{itemize}
\item $\mcb{P}_{\C}(\bar{\mbbV}) \subset \mcb{P}_{\C}$ consists of all lattice paths that have all of their intermediate vertices belonging to the set $\bar{\mathbb{V}}$ and both starting and endings vertices in $\mbbV \setminus \bar{\mathbb{V}}$
\item $\mcb{L}_{\C}(\bar{\mbbV}) \subset \mcb{L}_{\C}$ consists of all lattice loops that only traverse vertices in $\bar{\mathbb{V}}$. 
\end{itemize}
The set $\bar{\mathbb{V}}$ should be thought of as the set of integrated vertices. 
Note that for any $G \in \mathbf{G}_{\C}(\bar{\mbbV})$ and $x \in \bar\mbbV$ we must have $G_{x} \in 2 \N$. 

The core of the induction is to use Lemma~\ref{lemma:integrating_onesite_multiset} as a ``re-routing lemma''.
The sets of types  $\mcb{A}$, $\mcb{B}$, and $\mcb{C}$ will be subsets of lattice paths in $\mbbG$.
When integrating out the spin at $z \in \mbbV$, we will let $\CA$ be a multiset of lattice paths terminating at $z$ but not starting from $z$, let $\CB$ be a multiset of lattice paths both starting and ending at $z$, and finally let $\CC$ be a multiset of lattice paths that start at $z$ but do not terminate at $z$. 

We now give a lemma which contains the inductive argument and which proves Theorem~\ref{thm:loop_expansion} by taking $\bar{\mbbV} = \mbbV$.

\begin{lemma}\label{lem:full_loop_expansion}
For any $\mathbb{E}$ operator assignment $M$ and $\bar{\mbbV} \subset \mbbV$, we have 
\begin{equs}\label{eq:inductive_complex_case}
\int_{H^{\mbbV}}&
\exp\Big(
\sum_{e \in \mbbE} \vscal{\phi_{\underline{e}},M_{e}\phi_{\overline{e}}}
 \Big)\ 
\prod_{x\in\mbbV} \mrd  \rho^{\lambda_{x}} (\phi_x)
\\
=&
\sum_{ G \in \mathbf{G}_{\C}(\bar{\mbbV})}
\frac{
\prod_{x \in \bar{\mbbV}}
C^{\C,\lambda_{x}}_{G_{x}/2}}
{G!}
\prod_{\ell \in \mathcal{L}(G)}
\hat{\ell}(M)
\int_{H^{\mbbW}}
\prod_{\gamma \in \mathcal{P}(G)} \gamma(M,\phi)\;
\prod_{x\in \mbbW}  \mrd \rho^{\lambda_x}(\phi_x) \;,
\end{equs}
where we write $\mbbW = \mbbV \setminus \bar{\mbbV}$, and 
\[
 \gamma(M,\phi)  =  \vscal{\varphi_{\underline{\gamma}}, \prod_{e \in \gamma} M_{e} \phi_{\overline{\gamma}}}\;.
\]
\end{lemma}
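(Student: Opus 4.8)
The plan is to prove \eqref{eq:inductive_complex_case} by induction on the size of $\bar{\mbbV}$, peeling off one vertex at a time and invoking Lemma~\ref{lemma:integrating_onesite_multiset} as a ``re-routing'' step. The base case $\bar{\mbbV}=\emptyset$ is trivial: $\mathbf{G}_{\C}(\emptyset)$ consists only of multisets of lattice paths of length $1$ whose endpoints are both in $\mbbV$, and the identity reduces to expanding the exponential as $\sum_{\CA}\frac{1}{\CA!}\prod_{e\in\CA}\vscal{\phi_{\underline e},M_e\phi_{\overline e}}$, which is exactly the right-hand side with $G$ a multiset of single-edge paths and no loops. (One should check here that the $C$-constants are all $C^{\C,\lambda_x}_0=K^{\C}_0$, matching the normalization implicit in $\mrd\rho^{\lambda}$ having total mass $K^{\C}_0$; this is a bookkeeping point worth a sentence.)

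For the inductive step, suppose the identity holds for some $\bar{\mbbV}$ with $\mbbW=\mbbV\setminus\bar{\mbbV}\ni z$, and we wish to pass to $\bar{\mbbV}'=\bar{\mbbV}\cup\{z\}$. Starting from the RHS of \eqref{eq:inductive_complex_case} for $\bar{\mbbV}$, I would integrate out the $\phi_z$ variable inside $\int_{H^{\mbbW}}\prod_{\gamma\in\mathcal{P}(G)}\gamma(M,\phi)\,\prod_x\mrd\rho^{\lambda_x}(\phi_x)$. Group the paths $\gamma\in\mathcal{P}(G)$ according to their incidence with $z$: those with $\overline\gamma=z$, $\underline\gamma\neq z$ contribute factors $\vscal{v_\gamma,\phi_z}$ after setting $v_\gamma$ to be the partially-contracted vector $\prod_{e\in\gamma\restriction}M_e\phi_{\overline\gamma}$ read up to $z$; those with $\underline\gamma=z$, $\overline\gamma\neq z$ contribute $\vscal{\phi_z,v_\gamma}$; those with $\underline\gamma=\overline\gamma=z$ contribute $\vscal{\phi_z,M_\gamma\phi_z}$ where $M_\gamma=\prod_{e\in\gamma}M_e$; and paths not touching $z$ pass through untouched. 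Applying Lemma~\ref{lemma:integrating_onesite_multiset} to the $z$-integral replaces this collection by a sum over $G'\in\mcb{G}_{\C}(\mathcal{A},\mathcal{B},\mathcal{C})$ of concatenated objects: a path in $G'$ joining $\gamma_1\!\to\!\gamma_2\!\to\cdots$ becomes a single longer lattice path in $\mbbG$ (splicing at $z$), and a loop in $G'$ becomes a lattice loop in $\mbbG$. One then checks that the concatenation map is a bijection between (multisets $G$ over $\bar{\mbbV}$ together with contraction data $G'$ at $z$) and multisets over $\bar{\mbbV}'=\bar{\mbbV}\cup\{z\}$, and that under this bijection the product of symmetry factors and combinatorial weights $\frac{1}{G!}\cdot\frac{C_{\cdots}}{\mathcal{P}(G')!\mathcal{L}(G')!}\cdot\prod\frac{1}{S(\ell)}$ reassembles into $\frac{\prod_{x\in\bar{\mbbV}'}C^{\C,\lambda_x}_{G''_x/2}}{G''!}$ with the correct $\hat\ell(M)=\ell(M)/S(\ell)$ factors. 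The new exponent at $z$ is $G''_z = |\mathcal{A}|+2|\mathcal{B}|+|\mathcal{C}|$, which matches the index $|\mathcal{A}|+|\mathcal{B}|$ on $C^{\C,\lambda_z}$ in Lemma~\ref{lemma:integrating_onesite_multiset} because $|\mathcal{A}|=|\mathcal{C}|$; verifying $G''_z/2=|\mathcal{A}|+|\mathcal{B}|$ is where the ``$G_x\in 2\N$'' remarks get used.

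The conditions defining $\mcb{P}_\C$ — nonempty, orientation-respecting, and (for length $>1$) not starting or ending with a self-loop edge — must be tracked through the splicing. Orientation and nonemptiness are automatic; the self-loop condition is the subtle one, and it is precisely why $\mcb{L}_\C$ and $\mcb{P}_\C$ allow multiply-traversed self-loops as separate from length-$1$ self-loop paths (cf.\ Remark~\ref{rem:self_loop}): when we splice paths at $z$ through a $G'$-edge that is a self-loop at a $B$-type vertex $\gamma$ with $\underline\gamma=\overline\gamma=z$, this corresponds to inserting the lattice path $\gamma$ (both of whose endpoints are $z$) into the interior of a longer path, and one must check this never produces a spliced path that begins or ends with a self-loop edge of $\mbbG$ — it doesn't, because the self-loop lattice edges of $\mbbG$ at $z$ were already excluded from being the first or last edge of any $\gamma$ in $\mathcal{P}(G)$ over $\bar{\mbbV}$ once $z$ becomes internal, while genuine self-loop lattice \emph{loops} at $z$ arise correctly from $\mcL(G')$. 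This self-loop bookkeeping, together with the symmetry-factor accounting in the loop case, is the main obstacle; the rest is a careful but routine reorganization of the sum. Finally, taking $\bar{\mbbV}=\mbbV$ so that $\mbbW=\emptyset$ kills all path terms (no path can have an endpoint in the empty set $\mbbW$), $\mathbf{G}_\C(\mbbV)=\mathbf{L}_\C$, and \eqref{eq:inductive_complex_case} becomes the statement of Theorem~\ref{thm:loop_expansion}.
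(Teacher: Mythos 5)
Your proposal is correct and follows essentially the same route as the paper: induction on $\bar{\mbbV}$ with the base case given by expanding the exponential, and an inductive step that integrates out a single site $z$ by applying Lemma~\ref{lemma:integrating_onesite_multiset} with $\mcb{A},\mcb{B},\mcb{C}$ taken to be the path types ending at, looping at, and starting at $z$, followed by the concatenation bijection with matching symmetry factors ($S(\tilde\ell)=S(\ell)$) and the self-loop bookkeeping of Remark~\ref{rem:self-loops_internal}. The only stray remark is the parenthetical about $C^{\C,\lambda_x}_0$ in the base case: since $\bar{\mbbV}=\emptyset$ there, the product of $C$-constants is empty and no normalisation check is needed at that stage.
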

\begin{proof}
The base case of the induction occurs when $\bar{\mbbV} = \emptyset$ in which case the identity just follows from expanding the exponential.
This is because elements of $\mathbf{G}_\C(\emptyset)$ have no lattice loops and their paths must be individuals edges in $\mbbE$, so \eqref{eq:inductive_complex_case} becomes 
\begin{equs}
\int_{H^{\mbbV}}&
\exp\Big(
\sum_{e \in \mbbE} \vscal{\phi_{\underline{e}},M_{e}\phi_{\overline{e}}}
 \Big)\ 
\prod_{x\in\mbbV} \mrd  \rho^{\lambda_{x}} (\phi_x)\\
{}&=
\sum_{ \mcb{n} \in \N^{\mathbb{E}}}
\frac{1}{\mcb{n}!}
\int_{H^{\mbbV}}
\prod_{e \in \mbbE}\vscal{\varphi_{\underline{e}},M_{e}\varphi_{\overline{e}}}^{\mcb{n}_{e}}\ 
 \prod_{x\in \mbbV}  \mrd \rho^{\lambda_x}(\phi_x) \;.
\end{equs}

For the inductive step, we fix $\bar{\mbbV} \subset \mathbb{V}$ and $z \in \mbbW = \mathbb{V} \setminus \bar{\mbbV}$. 
We define $\mcb{P}_{i}(z)$ to be the set of all lattice paths in $\mcb{P}_{\C}(\bar{\mbbV})$ that start at $z$ but do not end at $z$, $\mcb{P}_{f}(z)$ to be the set of all lattice paths in $\mcb{P}_{\C}(\bar{\mbbV})$ that end at $z$ but do not start at $z$, and $\mcb{P}_{l}(z)$ to be the set of all lattice paths in $\mcb{P}_{\C}(\bar{\mbbV})$ that both start and end at $z$. 
We write $\mathbf{P}_{i}(z) = \multiset{\mcb{P}_{i}(z)}$ and define $\mathbf{P}_{f}(z)$ and $\mathbf{P}_{l}(z)$ similarly.
We also define $\hat{\mcb{P}}(z)$ as the set of all lattice paths in $\mcb{P}_{\C}(\bar{\mbbV} \sqcup \{z\})$ that pass through $z$ and  $\hat{\mcb{L}}(z)$ to be the set of all lattice loops in $\mcb{L}_{\C}(\bar{\mbbV} \sqcup \{z\})$ that pass through $z$. 
We write $\hat{\mathbf{G}}(z) = \multiset{\hat{\mcb{P}}(z) \sqcup \hat{\mcb{L}}(z)}$. 
To prove our inductive step it then suffices to prove
\begin{equs}
\sum_{ \mathcal{P}_{i} \in \mathbf{P}_{i}(z)}&
\sum_{ \mathcal{P}_{l} \in \mathbf{P}_{l}(z)}
\sum_{ \mathcal{P}_{f} \in \mathbf{P}_{f}(z)}
\frac{1}
{\mathcal{P}_{f}! \mathcal{P}_{l}! \mathcal{P}_{i}!}\\
\times &
\int_{H}
\Big(
\prod_{\gamma^a \in \mathcal{P}_{f}} \gamma^a(M,\varphi)
\Big)
\Big(
\prod_{\gamma^b \in \mathcal{P}_{l}} \gamma^b(M,\varphi)
\Big)
\Big(
\prod_{\gamma^c \in \mathcal{P}_{i}} \gamma^c(M,\varphi)
\Big)\ 
\mrd \rho^{\lambda_z}(\varphi_z) \\
{}&=
\sum_{G \in \hat{\mathbf{G}}(z)}
\frac{C^{\C,\lambda_{z}}_{G_{z}/2}}{G!}
\prod_{\ell \in \mathcal{L}(G)}
\hat{\ell}(M)
\prod_{\gamma \in \CP(G)} \gamma(M,\phi)\;.
\end{equs}
To match the left and right-hand sides of the above equality, we apply Lemma~\ref{lemma:integrating_onesite_multiset} to the left side and give a bijection  $F\colon \mcb{G}_{\C}(z) \rightarrow  \hat{\mathbf{G}}(z)$ where
\[
\mcb{G}_{\C}(z) 
=
\bigsqcup_{\mathcal{P}_{f} \in \mathbf{P}_{f}(z)}
\bigsqcup_{\mathcal{P}_{l} \in \mathbf{P}_{l}(z)}
\bigsqcup_{\mathcal{P}_{i} \in \mathbf{P}_{i}(z)}
\mcb{G}_{\C}(\mathcal{P}_{f},\mathcal{P}_{l},\CP_{i})\;.
\]
In our application of Lemma~\ref{lemma:integrating_onesite_multiset}, we take $\mcb{A} = \mcb{P}_{f}(z)$, $\mcb{B} = \mcb{P}_{l}(z)$, and $\mcb{C} = \mcb{P}_{i}(z)$. 

For our bijection, given $\tilde{G} \in \mcb{G}_{\C}(z)$, we define $F(\tilde{G})$ by turning the loops and paths of $\tilde{G}$ into lattice paths and loops via concatenation.
Since we have imposed orientation, there is no ambiguity in how to concatenate elements of $\mathcal{P}_{l}$. 

To see that $F$ given above is indeed a bijection, first note that given $G \in \hat{\mathbf{G}}(z)$ we can obtain $\mathcal{P}_{f}(G) \in \mathbf{P}_{f}(z)$, $ \mathcal{P}_{l}(G) \in \mathbf{P}_{l}(z)$, and $\mathcal{P}_{i}(G) \in \mathbf{P}_{i}(z)$ by looking at lattice paths and loops in $G$  and ``snipping'' them whenever they pass through $z$. 
Injectivity and surjectivity follow from the fact that there is precisely one tog-isomorphism class $\tilde{G}  \in \mcb{G}_{\C}(\mathcal{P}_{f},\mathcal{P}_{l},\CP_{i})$ with $F(\tilde{G}) = G$. 

The desired equality is then obtained by observing that, for $\tilde{\ell} \in \mathcal{L}(\tilde{G})$, if one forms the lattice loop $\ell  \in \mathcal{L}(G)$ by concatenating the vertices $\gamma \in \tilde{\ell}$, then one has $S(\tilde{\ell}) = S(\ell)$. 
This is because tog-automorphisms of the loop $\tilde{\ell}$ are type preserving permutations of the $\gamma \in \tilde{\ell}$ that also preserve the oriented loop structure and there is a bijection between the set of such tog-automorphisms of $\tilde{\ell}$ and those of $\mcb{g}(\ell)$. 
\end{proof}

\begin{remark}\label{rem:self-loops_internal}
In our definition of $\mcb{P}_{\C}$ we enforced that self-loops cannot be terminal edges in any path of length greater than $1$. 
This property is true for the base case of our induction above and propagates through the induction. 
In particular, anytime the path $\gamma = (e)$ for a self-loop $e$ appears in our re-routing operation when integrating at $z$, one must have $\underline{e} = \overline{e} = z$ so $\gamma$ appears in $\mathcal{P}_{l}$ which doesn't allow $\gamma$ to be the first or last path in any of the new ``path of paths'' produced in that step.
\end{remark}

\subsection{The real case}\label{subsec:real_case}
In this subsection we assume $H$ is a finite dimensional real Hilbert space.
The following is an analogue of Lemma~\ref{lemma:spherical_integral}.
\begin{lemma}\label{lemma:spherical_integral_real}
For any $N \in \N$ and vectors $a_{1},\dots,a_{2N} \in H$, we have 
\begin{equ}
\int_{\hat{H}}  \prod_{i=1}^{2N} \vscal{\hat{\varphi},a_i}\ \mrd \hat{\varphi}
=
 K_{N}^{\R} \sum_{\theta \in \hat{\mfS}_{2N}} 
\prod_{ \{i,j\} \in \theta } \vscal{a_i,a_{j}}\;,
\end{equ}
with 
\begin{equ}
K^{\R}_{N} \eqdef  \frac{\pi^{\dim_\R(H)/2} }{2^{N-1}\Gamma \big(N+ \dim_\R(H)/2 \big)}\;.
\end{equ}
Here $\dim_\R(H)$ is the dimension of $H$ as a real vector space, $\Gamma(z) = \int_{0}^{\infty} t^{z-1} e^{-t}\ \mrd t$,  and $\hat{\mfS}_{2N}$ is the set of all partitions of $\{1,2,\dots,2N\}$ into pairs. 
\end{lemma}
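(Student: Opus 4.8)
The plan is to mimic the complex-case proof of Lemma~\ref{lemma:spherical_integral}, replacing the standard complex Gaussian with a standard real Gaussian and complex Wick's rule with real Wick's rule. First I would introduce $Z$, a standard $H$-valued Gaussian, so that for all $a,b\in H$ one has $\E[\vscal{Z,a}\vscal{Z,b}]=\vscal{a,b}$. The density of $Z$ with respect to Lebesgue measure $\mrd\varphi$ is $(2\pi)^{-\dim_\R(H)/2} e^{-|\varphi|^2/2}$, so a scaling argument (writing $\varphi = r\hat\varphi$ and splitting the Lebesgue integral into a radial part and a spherical part, exactly as in the complex case) gives
\begin{equs}
\E\Big[\prod_{i=1}^{2N}\vscal{Z,a_i}\Big]
&= (2\pi)^{-\dim_\R(H)/2}\int_H \prod_{i=1}^{2N}\vscal{\varphi,a_i}\, e^{-|\varphi|^2/2}\,\mrd\varphi\\
&= (2\pi)^{-\dim_\R(H)/2}\Big(\int_0^\infty r^{2N+\dim_\R(H)-1} e^{-r^2/2}\,\mrd r\Big)\int_{\hat H}\prod_{i=1}^{2N}\vscal{\hat\varphi,a_i}\,\mrd\hat\varphi\;.
\end{equs}
Note that odd moments vanish by symmetry $Z\mapsto -Z$, consistent with the fact that the spherical integral of an odd number of linear factors is zero, which is why only the case of $2N$ vectors appears.

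Next I would evaluate the two pieces. By the real Wick rule (Isserlis' theorem), the left-hand side equals $\sum_{\theta\in\mfP_{2N}}\prod_{\{i,j\}\in\theta}\vscal{a_i,a_j}$, which already produces the combinatorial sum in the statement. For the radial integral, substituting $t = r^2/2$ gives $\int_0^\infty r^{2N+\dim_\R(H)-1}e^{-r^2/2}\,\mrd r = 2^{N+\dim_\R(H)/2-1}\Gamma(N+\dim_\R(H)/2)$. Solving for the spherical integral then yields the factor
\[
\frac{(2\pi)^{\dim_\R(H)/2}}{2^{N+\dim_\R(H)/2-1}\Gamma(N+\dim_\R(H)/2)} = \frac{\pi^{\dim_\R(H)/2}}{2^{N-1}\Gamma(N+\dim_\R(H)/2)} = K^\R_N\;,
\]
which is exactly the claimed constant, completing the proof.

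There is no serious obstacle here; the only points requiring minor care are (i) the bookkeeping of the Gaussian normalization constant $(2\pi)^{-\dim_\R(H)/2}$ versus the $\pi$-power in $K^\R_N$, and (ii) correctly handling the $e^{-r^2/2}$ (rather than $e^{-r^2}$) in the radial integral, which is what produces the $2^{N-1}$ in the denominator rather than a cleaner power of $2$. One could alternatively use the convention $\E[\vscal{Z,a}\vscal{Z,b}] = \frac12\vscal{a,b}$ with density $\propto e^{-|\varphi|^2}$ to more directly parallel the complex computation, but then the Wick contractions carry factors of $\frac12$ and one must rescale at the end; the version above seems cleanest. The reduction to the purely spherical identity is immediate because the integrand is homogeneous, so the radial and angular integrations factor exactly.
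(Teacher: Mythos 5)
Your proof is correct and is essentially identical to the paper's: both introduce a standard real Gaussian on $H$ with density $(2\pi)^{-\dim_\R(H)/2}e^{-|\varphi|^2/2}$, factor the Lebesgue integral into radial and spherical parts, apply the real Wick/Isserlis rule, and evaluate the radial integral as $2^{N+\dim_\R(H)/2-1}\Gamma(N+\dim_\R(H)/2)$. The constant bookkeeping matches the paper exactly.
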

\begin{proof}
For $Z$ a standard Gaussian on $H$, we have
\begin{equs}
\E\Big[ &
\prod_{i=1}^{2N} \vscal{a_i,Z}\ 
\Big]
=
\frac{1}{(2\pi)^{\dim_\R(H)/2}}
\int_{H} 
\prod_{i=1}^{2N} \vscal{a_i,\varphi} e^{-|\varphi|^{2}/2} \mrd \varphi\\
{}&=
\frac{1}{(2\pi)^{\dim_\R(H)/2}}
\int_{0}^{\infty}
r^{2N + \dim_\R(H) - 1} e^{-r^2/2}\  \mrd r \; \times
\int_{\hat{H}}  \prod_{i=1}^{2N} \vscal{a_i,\hat{\varphi}}\; \mrd \hat{\varphi} \;.
\end{equs}
We then use Wick's rule and the fact that 
\[
\int_{0}^{\infty}
r^{2N + \dim_\R(H) - 1} e^{-r^2/2}\  \mrd r  = 2^{N+\dim_{\R}(H)/2 - 1} \Gamma \big(N+ \dim_\R(H)/2 \big)\;.
\]
\end{proof}
We now introduce the graphical/combinatorial objects that will organize our expansion in the real case. 

\begin{definition}
Given a set $X$, we write $X^{(2)}$ for the collection of all two element subsets of $X$.
A subset $G \subset X^{(2)}$ is called a (unoriented) graph on $X^{(2)}$. 
A graph $R \subset X^{(2)}$ which is a partition\footnote{That is, each element of $X$ appears in an element of $R$ precisely once.} of $X$ is called a pairing of $X$. 
\end{definition}

Given two sets $A$, $B$ we write $\hat{\mcb{G}}_{\R}(A,B)$ to be the set of all pairings of the set $\mcb{V}(A,B) = A \sqcup \big(B \times \{\mcb{o},\mcb{i}\}\big)$.  
While an element $G \in \hat{\mcb{G}}_{\R}(A,B)$ is technically an unoriented graph, in the standard sense it doesn't have any paths or loops of length more than $1$.  

The ``paths'' and ``loops'' from $G$ that we will use in our formulae arise from collapsing the vertices $(b, \mcb{o})$ and $(b,\mcb{i})$ 
for each $b \in B$ to get an oriented graph $\tilde{G}$ on $A \sqcup B$, and then taking the paths and loops in $\tilde{G}$ along with the data of how they traverse elements of $B$.  
Recall that in $\tilde{G}$, each vertex in $A$ (resp. $B$) is one incident to precisely one edge (resp. two edges, or with a self-loop) - we also include the following additional data: for each $b$ and each incidence of $b$ to an edge $e$, one also keeps track of whether $(b,\mcb{i})$ or $(b,\mcb{o})$ is incident to $e$.
One can think of $\mcb{o}$ and $\mcb{i}$ as referring to distinguished ``outgoing'' and ``incoming'' half-edges for a given vertex in $B$. 

With this point of view, we can associate to $G \in \hat{\mcb{G}}_{\R}(A,B)$ a set of unoriented ``paths'' $\CP(G)$ and ``loops''\footnote{The self-loops described in these paragraph are treated as elements of $\CL(G)$} $\CL(G)$ which carry\footnote{A loop or a path in $G$ is itself concretely realized as an element of $\hat{\mcb{G}}_{\R}(A',B')$ for $A' \subset A$, $B' \subset B$, the sets of vertices that are incident to the path or loop.} with them the data of how they traverse the elements of $B$.
We give some examples below along with some useful notation for working with such paths and loops. 

Writing $A = \{a_{1},a_{2}\}$ and $B = \{b_{1},b_{2},b_{3}\}$ in the figure below we give two examples of paths that travel between the vertices $a_{1},a_{2}$ as endpoints and the three vertices of $B$ in between. 
If one reduces them to graphs on the vertex set $A \sqcup B$, the two paths below would be the same, but since we also keep track of how they traverse elements of $B$ they will be treated as distinct. 
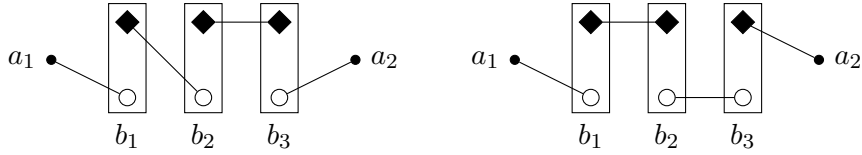
\begin{figure}[H]
\centering
\captionsetup[subfloat]{font=small}
\begin{tikzpicture}

\node [draw, dot, name=A1, label=left:{$a_1$}] at (0,1.5) {};
\node [draw, dot, name=A2,label=right:{$a_2$}] at (4,1.5) {};

\node [draw, circle, name=B1i, scale=0.6] at (1,1) {};
\node [draw, circle, name=B2i, scale=0.6] at (2,1) {};
\node [draw, circle, name=B3i, scale=0.6] at (3,1) {};

\node [draw, diamond, fill=black, name=B1o, scale=0.6] at (1,2) {};
\node [draw, diamond, fill=black, name=B2o, scale=0.6] at (2,2) {};
\node [draw, diamond, fill=black, name=B3o, scale=0.6] at (3,2) {};

\node [draw, fit=(B1i)(B1o), inner sep=2.5pt, label=below:{$b_1$}] {};
\node [draw, fit=(B2i)(B2o), inner sep=2.5pt, label=below:{$b_2$}] {};
\node [draw, fit=(B3i)(B3o), inner sep=2.5pt, label=below:{$b_3$}] {};

\draw (A1) -- (B1i);
\draw (B1o) -- (B2i);
\draw (B2o) -- (B3o);
\draw (B3i) -- (A2);
\end{tikzpicture}
\hspace{.5cm}
\begin{tikzpicture}

\node [draw, dot, name=A1, label=left:{$a_1$}] at (0,1.5) {};
\node [draw, dot, name=A2,label=right:{$a_2$}] at (4,1.5) {};

\node [draw, circle, name=B1i, scale=0.6] at (1,1) {};
\node [draw, circle, name=B2i, scale=0.6] at (2,1) {};
\node [draw, circle, name=B3i, scale=0.6] at (3,1) {};

\node [draw, diamond, fill=black, name=B1o, scale=0.6] at (1,2) {};
\node [draw, diamond, fill=black, name=B2o, scale=0.6] at (2,2) {};
\node [draw, diamond, fill=black, name=B3o, scale=0.6] at (3,2) {};

\node [draw, fit=(B1i)(B1o), inner sep=2.5pt, label=below:{$b_1$}] {};
\node [draw, fit=(B2i)(B2o), inner sep=2.5pt, label=below:{$b_2$}] {};
\node [draw, fit=(B3i)(B3o), inner sep=2.5pt, label=below:{$b_3$}] {};

\draw (A1) -- (B1i);
\draw (B1o) -- (B2o);
\draw (B2i) -- (B3i);
\draw (B3o) -- (A2);

\end{tikzpicture}
\caption{Circles indicate $\mcb{i}$ labels and diamonds represent $\mcb{o}$ labels.}\label{fig:path_real_case}
\end{figure}
The path on the left of Figure~\ref{fig:path_real_case} could be written as either
\begin{equ}\label{eq:path}
(a_{1}, b^{1}_1, b^1_2, b^{-1}_3, a_{2}) 
\quad
\text{or}
\quad 
(a_{2}, b^1_3, b^{-1}_2, b^{-1}_1, a_{1})\;.
\end{equ}
The superscripts of the $b$-sites, which can take the value $\pm 1$, indicate how we traverse the given $b$ site.
Here, in the first expression, we write $b^1_1$ to indicate that, travelling from $a_{1}$, the path first encounters $(b_1,\mcb{i})$, and similarly we write $b^{-1}_3$ to indicate that, travelling along this orientation, the path first encounters $(b_3,\mcb{o})$. 
There are two orientations for the path, which gives us the two expressions in \eqref{eq:path}, and both expressions describe the same path on the left of Figure~\ref{fig:path_real_case}.
On the other hand, the path on the right of Figure~\ref{fig:path_real_case} would be seen as a distinct path, and could be written $(a_{1}, b^1_1, b^{-1}_2, b^{1}_3, a_{2}) $ or $(a_{2}, b^{-1}_3, b^{1}_2, b^{-1}_1, a_{1})$.

Given a collection of vectors $v = (v_{a}: a \in A)$ with $v_{a} \in H$ along with a collection of operators $M = (M_{b}: b \in B)$ with $M_{b} \in L(H,H)$, and a path $\gamma$ written $(a,b_{1}^{p_{1}},\dots,b_{n}^{p_n},a')$ with $n \ge 0$ and $p_{j} = \pm 1$ for $1 \le j \le n$, we set 
\begin{equ}\label{eq:real_path_contrib}
\gamma(M,v) = \bvscal{v_{a}, \Big(\prod_{j=1}^{n} M_{j} \Big)v_{a'}}\;,
\end{equ}
where $M_{j} = M_{b_{j}}$ if $p_{j} = 1$ and $M_{j} = M^{T}_{b_{j}}$ if $p_{j}=-1$.  
As an example, for the path $\gamma$ as in \eqref{eq:path},
\begin{equ}
\gamma(M,v)
=
\vscal{ v_{a_{1}}, M_{b_1} M_{b_2} M^{T}_{b_3} v_{a_{2}} }
=
\vscal{ v_{a_{2}}, M_{b_3} M^{T}_{b_2} M^{T}_{b_1} v_{a_{1}} }\;.
\end{equ}
Note that $\gamma(M,v)$ is well-defined, i.e. different ways of writing the path give equivalent expressions. 

We now turn to loops. 
We set $B=\{b_{1},b_{2},b_{3},b_{4}\}$ and give two examples of loops that  look the same as loops on $B$ but which we view as distinct since they traverse the elements of $B$ differently. 
\begin{figure}[H]
\centering
\captionsetup[subfloat]{font=small}
\begin{tikzpicture}

\node [draw, circle, name=B1i, scale=0.6] at (1,1) {};
\node [draw, circle, name=B2i, scale=0.6] at (2,1) {};
\node [draw, circle, name=B3i, scale=0.6] at (3,1) {};
\node [draw, circle, name=B4i, scale=0.6] at (4,1) {};

\node [draw, diamond, fill=black, name=B1o, scale=0.6] at (1,2) {};
\node [draw, diamond, fill=black, name=B2o, scale=0.6] at (2,2) {};
\node [draw, diamond, fill=black, name=B3o, scale=0.6] at (3,2) {};
\node [draw, diamond, fill=black, name=B4o, scale=0.6] at (4,2) {};

\node [draw, fit=(B1i)(B1o), inner sep=2.5pt, label=below:{$b_1$}] {};
\node [draw, fit=(B2i)(B2o), inner sep=2.5pt, label=below:{$b_2$}] {};
\node [draw, fit=(B3i)(B3o), inner sep=2.5pt, label=below:{$b_3$}] {};
\node [draw, fit=(B4i)(B4o), inner sep=2.5pt, label=below:{$b_4$}] {};

\draw (B1i) -- (B2o);
\draw (B2i) -- (B3i);
\draw (B3o) -- (B4i);
\draw (B4o) to [bend right] (B1o);

\end{tikzpicture}
\hspace{1cm}
\begin{tikzpicture}

\node [draw, circle, name=B1i, scale=0.6] at (1,1) {};
\node [draw, circle, name=B2i, scale=0.6] at (2,1) {};
\node [draw, circle, name=B3i, scale=0.6] at (3,1) {};
\node [draw, circle, name=B4i, scale=0.6] at (4,1) {};

\node [draw, diamond, fill=black, name=B1o, scale=0.6] at (1,2) {};
\node [draw, diamond, fill=black, name=B2o, scale=0.6] at (2,2) {};
\node [draw, diamond, fill=black, name=B3o, scale=0.6] at (3,2) {};
\node [draw, diamond, fill=black, name=B4o, scale=0.6] at (4,2) {};

\node [draw, fit=(B1i)(B1o), inner sep=2.5pt, label=below:{$b_1$}] {};
\node [draw, fit=(B2i)(B2o), inner sep=2.5pt, label=below:{$b_2$}] {};
\node [draw, fit=(B3i)(B3o), inner sep=2.5pt, label=below:{$b_3$}] {};
\node [draw, fit=(B4i)(B4o), inner sep=2.5pt, label=below:{$b_4$}] {};

\draw (B1i) -- (B2i);
\draw (B2o) -- (B3i);
\draw (B3o) -- (B4o);
\draw (B4i) to (B1o);

\end{tikzpicture}
\caption{Circles indicate $\mcb{i}$ labels and diamonds represent $\mcb{o}$ labels.}
\label{fig:loop_real_case}
\end{figure}
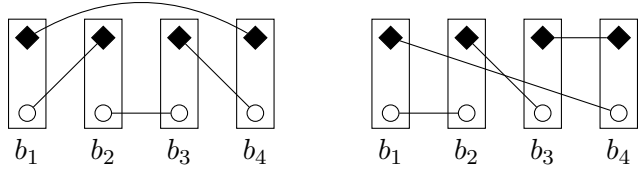
The loop $\ell$ on the left of Figure~\ref{fig:loop_real_case} could be written as
\begin{equ}\label{eq:loop}
(b^{-1}_{1},b^{-1}_{2},b^{1}_{3},b^{1}_{4})
\quad
\text{or}
\quad
(b^1_1,b_4^{-1},b_3^{-1},b_2^{1})\;,
\end{equ}
or any cyclic permutation of the above, such as $(b^{1}_{3},b^{1}_{4},b^{-1}_{1},b^{-1}_{2})$ or $(b^{1}_{2},b^{1}_{1},b^{-1}_{4},b^{-1}_{3})$.
The two expressions in \eqref{eq:loop} differ by orientation and the cyclic permutations change the ``basepoint''. 
One way of writing the path on the right of Figure~\ref{fig:loop_real_case} is $(b^{1}_{2},b^{1}_{3},b^{-1}_{4},b^{-1}_{1})$.

Analogously to \eqref{eq:real_path_contrib}, given $M$ as earlier,  and a loop $\ell$ written $(b_{1}^{p_{1}},\dots,b_{n}^{p_n})$ with $n \ge 1$, we set 
\begin{equ}
\ell(M) = \Trace \Big( \prod_{j=1}^{n} M_{j} \Big)\;,
\end{equ}
with $M_{j}$ defined as in  \eqref{eq:real_path_contrib}. 
For $\ell$ as on the left of Figure~\ref{fig:loop_real_case} we would have
\begin{equ}\label{eq:real_loop_contrib}
\ell(M) = \Trace( M_{b_1}^{T} M_{b_2}^{T} M_{b_3} M_{b_4})
=
\Trace( M_{b_1} M^{T}_{b_4} M^T_{b_3} M_{b_2})\;.
\end{equ}

The picture in Figure~\ref{fig:selfloop_real_case} gives a picture of a self-loop.
\begin{figure}[H]
\centering
\begin{tikzpicture}

\node [draw, circle, name=B1i, scale=0.6] at (1,1) {};

\node [draw, diamond, fill=black, name=B1o, scale=0.6] at (1,2) {};

\node [draw, fit=(B1i)(B1o), inner sep=2.5pt, label=below:{$b$}] {};

\draw (B1i) -- (B1o);

\end{tikzpicture}
\caption{A self loop $(b^1) = (b^{-1})$. }
\label{fig:selfloop_real_case}
\end{figure}
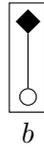
For $\ell$ as in Figure~\ref{fig:selfloop_real_case} one would write, in analogy with \eqref{eq:loop}, $(b^{1})$ or $(b^{-1})$ and set $\ell(M) = \Trace(M_b) = \Trace(M_b^T)$.  

Our analogue of Lemma~\ref{lem:integrate_higgs_onesite} is then the following. 
\begin{lemma}\label{lem:integrate_higgs_onesite_real}
Let $A,B$ be finite, disjoint sets. Suppose we fix a collection of vectors $v = (v_{a}: a \in A)$ with $v_{a} \in H$ along with a collection of operators $M = (M_{b}: b \in B)$ with $M_{b} \in L(H,H)$ .

We then have
\begin{equs}[eq:integrating_higgs_onesite_real]
\int_{H} &
\Big( 
\prod_{a \in A} \vscal{v_{a},\varphi}
\Big)
\Big(
\prod_{b \in B} \vscal{\varphi, M_{b} \varphi}
\Big)\ \mrd \rho^{\lambda}(\varphi) 
\\
{}&=
C^{\R,\lambda}_{|A|/2 + |B|}
\sum_{G \in \hat{\mcb{G}}_{\R}(A,B)}
\Big(
\prod_{\gamma \in \mathcal{P}(G)}
\gamma(M,v)
\Big)
\Big(
\prod_{\ell \in \mathcal{L}(G)}
\ell(M)
\Big)\;,
\end{equs}
understood as zero if $|A|$ is not even and
where, for $j \in \N$,
\[
C^{\R,\lambda}_{j} = K^{\R}_{j} \int_{0}^{\infty} s^{2j}\  \mrd \lambda(s) \;.
\] 
\end{lemma}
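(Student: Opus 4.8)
The plan is to mirror the proof of Lemma~\ref{lem:integrate_higgs_onesite} from the complex case, substituting Lemma~\ref{lemma:spherical_integral_real} for Lemma~\ref{lemma:spherical_integral} and tracking the combinatorics of unordered pairings rather than bijections. First I would fix an orthonormal basis $(e_i)_{i=1}^k$ of $H$ and, without loss of generality, take $v_u = e_{i_u}$ for each $u \in A$. Writing $\varphi = s\hat\varphi$ and splitting the $\rho^\lambda$-integral into the radial part $\mrd\lambda(s)$ and the spherical part $\mrd\hat\varphi$, the factor $\prod_{a\in A}\vscal{v_a,\varphi}\prod_{b\in B}\vscal{\varphi,M_b\varphi}$ produces an overall power $s^{|A|+2|B|}$ and, after expanding each $\vscal{\varphi,M_b\varphi} = \sum_{i_b}\vscal{\varphi,e_{i_b}}\vscal{e_{i_b},M_b^{\mathrm{hmm}}...}$ — more precisely inserting the basis so that each $b\in B$ contributes two vectors $e_{i_b}$ and $M_b^T e_{i_b}$ sitting inside inner products of the form $\vscal{\hat\varphi,\cdot}$ — leaves a spherical integral of a product of exactly $|A| + 2|B|$ linear functionals $\vscal{\hat\varphi,\cdot}$. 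This is nonzero only when $|A|+2|B|$ is even, i.e. when $|A|$ is even, giving the stated vanishing.

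Next I would apply Lemma~\ref{lemma:spherical_integral_real} with $N = |A|/2 + |B|$, which turns the spherical integral into $K^{\R}_{|A|/2+|B|}$ times a sum over all pair partitions $\theta$ of the $|A|+2|B|$ slots, each contributing $\prod_{\{i,j\}\in\theta}\vscal{a_i,a_j}$. The radial integral contributes $\int_0^\infty s^{2(|A|/2+|B|)}\mrd\lambda(s)$, so the prefactor assembles into $C^{\R,\lambda}_{|A|/2+|B|}$. The key bookkeeping step is to identify the pair partitions $\theta$ of the vertex set $\mcb{V}(A,B) = A \sqcup (B\times\{\mcb{o},\mcb{i}\})$ with the elements of $\hat{\mcb{G}}_{\R}(A,B)$: a slot coming from $a\in A$ is the single half-edge at $a$, while the two slots coming from $b\in B$ are precisely the half-edges $(b,\mcb{i})$ and $(b,\mcb{o})$ (the vector $e_{i_b}$ paired with the $\mcb{i}$ label and $M_b^T e_{i_b}$ — equivalently the $\mcb{o}$ side carrying $M_b$ — paired with the $\mcb{o}$ label). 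A pair partition is thus exactly a way of joining these half-edges into a graph in $\hat{\mcb{G}}_{\R}(A,B)$, which decomposes canonically into paths (endpoints in $A$) and loops (all vertices in $B$).

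Then I would perform the remaining sum over the basis indices $(i_b)_{b\in B}$: fixing $G\in\hat{\mcb{G}}_{\R}(A,B)$, the product $\prod_{\{i,j\}\in\theta}\vscal{a_i,a_j}$ factorizes along the connected components of $G$, and along a single path $a,b_1^{p_1},\dots,b_n^{p_n},a'$ the summation $\sum_{i_{b_1},\dots,i_{b_n}}$ collapses a chain of inner products $\vscal{e_{i_a},M_{1}e_{i_{b_1}}}\vscal{e_{i_{b_1}},M_2 e_{i_{b_2}}}\cdots$ — where $M_j$ is $M_{b_j}$ or $M_{b_j}^T$ according to which of the $\mcb{i}/\mcb{o}$ slots the incoming edge hits, i.e. according to $p_j$ — into $\vscal{v_a, M_1\cdots M_n v_{a'}} = \gamma(M,v)$ as in~\eqref{eq:real_path_contrib}; along a loop the same telescoping produces $\Trace(\prod M_j) = \ell(M)$. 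This is the exact real analogue of the two displayed telescoping computations in the proof of Lemma~\ref{lem:integrate_higgs_onesite}. Assembling the three ingredients gives the right-hand side of~\eqref{eq:integrating_higgs_onesite_real}.

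The main obstacle I anticipate is the careful matching of the $\pm 1$ superscripts (the $\mcb{i}/\mcb{o}$ data) with transposes $M_b$ versus $M_b^T$, and confirming that the notion of ``path/loop in $G$ that carries the data of how it traverses $B$'' is precisely what makes $\gamma(M,v)$ and $\ell(M)$ well-defined independently of the chosen orientation — this is where the real case genuinely differs from the complex case, in which the intrinsic orientation of each edge removed all such ambiguity. Once one checks that flipping the global orientation of a path sends each $M_j \mapsto M_{n+1-j}^{T}$ and reverses the order, so that $\vscal{v_a, M_1\cdots M_n v_{a'}} = \vscal{v_{a'}, M_n^T\cdots M_1^T v_a}$ (using that $(\cdot,\cdot)$ is symmetric bilinear on the real $H$), and similarly that cyclic-plus-reversal invariance of the trace handles loops, the well-definedness asserted after~\eqref{eq:real_path_contrib} and~\eqref{eq:real_loop_contrib} is exactly what is needed, and the rest is routine.
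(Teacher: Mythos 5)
Your proposal is correct and follows essentially the same route as the paper's proof: orthonormal basis expansion, radial/spherical splitting, the real spherical moment formula (Lemma~\ref{lemma:spherical_integral_real}), identification of Wick pairings with elements of $\hat{\mcb{G}}_{\R}(A,B)$, and telescoping of basis sums along paths and loops. The only (immaterial) difference is that you insert a single resolution of the identity per $b\in B$, so the two slots for $b$ carry the vectors $e_{i_b}$ and $M_b^{T}e_{i_b}$, whereas the paper inserts two indices $i_{(b,\mcb{i})},i_{(b,\mcb{o})}$ and keeps the coupling factor $\vscal{e_{i_{(b,\mcb{i})}},M_b e_{i_{(b,\mcb{o})}}}$ outside the spherical integral; both yield the same pairing combinatorics and the same $\mcb{i}/\mcb{o}$-to-transpose dictionary.
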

\begin{proof}
We fix an orthonormal basis $(e_{i})_{i=1}^{k}$ of $H$.
Without loss of generality, we may assume that, for $a \in A$, $v_{a} = e_{i_{a}}$ for some $1 \le i _{a} \le k$. 
We can then write the left-hand side of \eqref{eq:integrating_higgs_onesite_real}  as 
\begin{equs}
\sum_{ \substack{(i_{q}:  q \in B \times \{\mcb{i},\mcb{o}\})
\\
1 \le i_{q} \le k
}} &
\int_{0}^{\infty} 
\int_{\hat{H}}
s^{|A|+2|B|}
\prod_{a \in A} \vscal{e_{i_{a}},  \hat{\varphi} }\\
{}&
\quad \quad \quad
\times 
\prod_{b \in B} \vscal{e_{i_{(b,\mcb{i})}},\hat{\varphi}}
\vscal{e_{i_{(b,\mcb{o})}} ,\hat{\varphi}}
\vscal{ e_{i_{(b,\mcb{i})}}, M_{b} e_{i_{(b,\mcb{o})}}}\ 
\mrd \hat{\varphi}\ \mrd \lambda(s)\;.
\end{equs}
Above, we wrote $\varphi = s \hat{\varphi}$ and decomposed the $\varphi$ integration into integration over $s$ and $\hat{\varphi}$. 
We also expanded each factor of $\vscal{ \hat{\phi}, M_{b} \hat{\phi}}$ using our orthonormal basis. 
 
We can then apply Lemma~\ref{lemma:spherical_integral_real} to perform the integral over $\hat{\varphi}$ which gives us 
\begin{multline*}
K^{\R}_{|A|/2+|B|}
\sum_{ \substack{(i_{q}:  q \in B \times \{\mcb{i},\mcb{o}\})
\\
1 \le i_{q} \le k
}} 
\Bigg(
\prod_{b \in B} 
 \vscal{ e_{i_{(b,\mcb{i})}}, M_b e_{i_{(b,\mcb{o})}}}
 \Bigg)
\\
\times\sum_{G \in \hat{\mcb{G}}_{\R}(A,B)}
\Bigg[
\prod_{ \{w,u\}  \in \mcb{V}(A,B) } \vscal{e_{i_{u}} ,e_{i_w}}\
\Bigg]\;.
\end{multline*}
Note that the sum above is empty unless $|A|$ is even. 
The rest of the proof follows by summing indices over the paths and loops generated by $G$ similarly to the last part of the proof of Lemma~\ref{lem:integrate_higgs_onesite}. 
\end{proof}
\subsubsection{Typed pairing isomorphisms}
We now introduce the real case analogs of the isomorphisms and symmetry factors given in Section~\ref{subsec:tog-iso} for the complex case.
\begin{definition} A typed unoriented graph is an unoriented graph on a typed set $(X,\iota)$.
We define typed pairings similarly.
\end{definition}
As in the complex case, we have a multiset analogue of Lemma~\ref{lem:integrate_higgs_onesite_real}.
We fix two disjoint sets of types $\mcb{A}$ and $\mcb{B}$, and additionally fix a decomposition $\mcb{B} = \vec{\mcb{B}} \sqcup \mcb{B}^{o}$. 
For any $\CA \in \multiset{\mcb{A}}$, $\CB \in \multiset{\mcb{B}}$, recall that $\hat{\mcb{G}}_{\R}(\mathcal{A},\mathcal{B})$ 
is the set of all pairings of $\CA \sqcup \big(\CB \times \{\mcb{o},\mcb{i}\}\big)$.

Given the typing of $\CA$ by $\mcb{A}$ and a typing of $\CB$ by $\mcb{B}$, we define an associated typing of $\CA \sqcup \big(\CB \times \{\mcb{o},\mcb{i}\}\big)$ by a set of possible types $\mcb{A} \sqcup \big(\vec{\mcb{B}} \times \{\mcb{o},\mcb{i} \}\big) \sqcup \mcb{B}^{o}$ as follows.
The elements of $\CA$ are naturally typed by $\mcb{A}$.
Regarding $\CB$, we note there is a decomposition $\CB = \vec{\CB} \sqcup \CB^{o}$ with $\vec{\CB} \in \multiset{\vec{\mcb{B}}}$ and   $\CB^{o} \in \multiset{\mcb{B}^{o}}$. 
Then the elements of $\vec{\CB} \times \{\mcb{i},\mcb{o}\}$ are naturally typed by  $\vec{\CB} \times \{\mcb{i},\mcb{o}\}$, while we type the elements of $\CB^{o} \times \{\mcb{i},\mcb{o}\}$ by $\mcb{B}^{o}$ via ``forgetting'' the second component.
More precisely, writing $(\CA,\iota_{\CA})$, $(\CB,\iota_{\CB})$ for the typed sets taken as input, we define a type map $\iota$ on $\CA \sqcup \big(\CB \times \{\mcb{o},\mcb{i}\}\big)$
by setting, for $c \in \CA \sqcup \big(\CB \times \{\mcb{o},\mcb{i}\}\big)$,
\[
\iota(c) 
=
\begin{cases}
\iota_{\CA}(a) & \text{ if } c = a \in \mathcal{A},\\
\iota_{\CB}(b) & \text{ if } c = (b,\mcb{j}) \in \mathcal{B}^{o} \times \{\mcb{i},\mcb{o}\},\\
(\iota_{\CB}(b), \mcb{j}) & \text{ if } c = (b,\mcb{j}) \in \vec{\mathcal{B}} \times \{\mcb{i},\mcb{o}\}.
\end{cases}
\]

Before stating the key definitions of this subsection, we note that any bijection $f$ on $X$ induces a bijection $\mathbf{f}$ on $X^{(2)}$ by mapping $\{a,b\} \mapsto \mathbf{f}(\{a,b\}) = \{f(a),f(b)\}$. 

\begin{definition}\label{def:tg-iso}
For $\CA \in \multiset{\mcb{A}}, \CB \in \multiset{\mcb{B}}$, an $(\CA,\CB)$-type permutation is a type permutation $f$ on  $\Big(\CA \sqcup \big(\CB \times \{\mcb{o},\mcb{i}\}\big), \iota \Big)$ with the property that $\mathbf{f}$ maps the set $\Big\{ \{(b,\mcb{i}),(b,\mcb{o})\}: b \in \CB \Big\}$ to itself. 
\end{definition}

\begin{definition}\label{ref:typed_pairing_iso}
Given $G_1, G_2 \in \hat{\mcb{G}}_{\R}(\CA,\CB)$, an $(\CA,\CB)$-type permutation $f$ is called a tg-isomorphism from $G_1$ to $G_2$ if $\mathbf{f}$ maps $G_1$ to $G_2$. 
If such a tg-isomorphism exists, we call $G_1$ and $G_2$ tg-isomorphic. 

In the case where $G = G_1 = G_2$, we call $f$ a tg-automorphism.
\end{definition}
Given $G \in \hat{\mcb{G}}_{\R}(\CA,\CB)$, we define $S(G)$ to be the number of tg-automorphisms on $G$. 
We also define $\mcb{G}_{\R}(\mathcal{A},\mathcal{B})$ to be the set of tg-isomorphism classes of $\hat{\mcb{G}}_{\R}(\mathcal{A},\mathcal{B})$.
Note that will perform a similar overloading of notation as that described in Remark~\ref{rem:graph_overload}
 
%
%

As before, by identifying $(b,\mcb{o})$ and $(b,\mcb{i})$, there is a natural one-to-one correspondence between the set $\hat{\mcb{G}}_{\R}(\mathcal{A},\mathcal{B})$ and graphs with vertex set $\mathcal{A} \sqcup \mathcal{B} $ where each vertex in $\mathcal{A}$ is incident only to one edge and each vertex in $\mathcal{B}$ is either incident to precisely two distinct edges or just one self-loop, and where we keep track of how vertices in $\CB$ are traversed.
The notion of tg-isomorphism described on $\hat{\mcb{G}}_{\R}(\mathcal{A},\mathcal{B})$ viewed in this setting carries
the unoriented graph structure $\mathcal{A} \sqcup \mathcal{B}$ but where ``vertices of the same type are indistinguishable'' and we also keep track of the different ``incoming and outgoing'' connections for types in $\vec{\mcb{B}}$ but forget this for types in $\CB^{o}$. 

We also have notions of tg-isomorphism class of loops and paths, which we refer to again as types of paths and loops. 
Given $G \in \mcb{G}_{\R}(\mathcal{A},\mathcal{B})$, we write as before $\mathcal{P}(G)$ for the multiset of types of paths in $G$ and $\mathcal{L}(G)$ the multiset of types of loops in $G$.

An $\mcb{A}$-vector assignment is a tuple of vectors $v = (v_{u}: u \in \mcb{A})$ with $v_{u} \in H$.
A $\mcb{B}$-operator assignment is a tuple $M = (M_{b} : b \in \mcb{B})$ with $M_{b} \in L(H,H)$ for $b \in \CB$ and $M_{b}$ symmetric for $b \in \CB^{o}$, 
Given such assignments, we define, for any $\gamma \in \mathcal{P}(G)$ and $\ell \in \mathcal{L}(G)$, $\gamma(M,v)$ and $\ell(M)$ as in \eqref{eq:real_path_contrib} and \eqref{eq:real_loop_contrib} - note that these definitions are well-defined with respect to tg-isomorphism classes of paths and loops because we have enforced the necessary symmetry conditions on $M$.  

Additionally, given a fixed loop or path $\eta \in \mathcal{P}(G) \sqcup \mathcal{L}(G)$ we define $S(\eta)$ to be the number of tg-automorphisms of $\eta$ using the notion of tg-isomorphism we've defined above. 

\begin{remark}
\label{rem:automorphs_real_case}
For $\gamma \in \mathcal{P}(G)$, one has, in contrast with the complex case, $S(\gamma) \in \{1,2\}$, with $S(\gamma) = 2$ if and only if $\gamma$ a ``palindrome''. 
As an example (see Figure~\ref{fig:gamma_real_case} for illustrations), suppose $u,v$ both are of the same type in $\mcb{A}$.
Then if $\gamma$ is the path $u \leftrightarrow v$,
one would have $S(\gamma) =2$ since one can exchange $u$ and $v$. 
As another example, if one has additional vertices $w, w'$ of the same type in $\vec{\mcb{B}}$, then the path $\gamma'$ given by $u \leftrightarrow (w,\mcb{i}) - (w,\mcb{o}) \leftrightarrow (w',\mcb{o}) - (w',\mcb{i}) \leftrightarrow v$
also has $S(\gamma') = 2$ since one is allowed to simultaneously exchange $u$ and $v$ along with $w$ and $w'$ (exchanging only one of these pairs would not be allowed). 
On the other hand, if $\gamma''$ is given by $u \leftrightarrow (w,\mcb{i}) - (w,\mcb{o}) \leftrightarrow v$ we have $S(\gamma'') = 1$, in particular we cannot interchange $u$ and $v$. 
As a last example, if we have $\tilde{w}$ typed in $\CB^{o}$ then $\tilde{\gamma}$ given by $u \leftrightarrow (\tilde{w},\mcb{i}) - (\tilde{w},\mcb{o}) \leftrightarrow v$ would have $S(\tilde{\gamma}) = 2$ since one can simultaneously exchange $u$ and $v$ along with $(\tilde{w},\mcb{i})$ and $(\tilde{w},\mcb{o})$. 

Similarly for loops, in contrast to the complex case, the tg-automorphism group of $\ell\in\mathcal{L}(G)$ is a subgroup of $C_n\times \Z_2$ (cf. Remark~\ref{rem:automorphisms of loops}).
\end{remark}

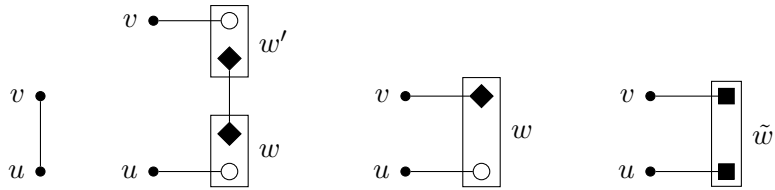
\begin{figure}[H]
\centering
\begin{tikzpicture}

\node [draw, dot, name=u, label=left:{$u$}] at (0,0) {};
\node [draw, dot, name=v,label=left:{$v$}] at (0,1) {};

\draw (u) -- (v);
\end{tikzpicture}
\qquad
\begin{tikzpicture}

\node [draw, dot, name=u, label=left:{$u$}] at (0,0) {};
\node [draw, dot, name=v, label=left:{$v$}] at (0,2) {};

\node [draw, circle, name=w1i, scale=0.6] at (1,0) {};
\node [draw, circle, name=w2i, scale=0.6] at (1,2) {};

\node [draw, diamond, fill=black, name=w1o, scale=0.6] at (1,0.5) {};
\node [draw, diamond, fill=black, name=w2o, scale=0.6] at (1,1.5) {};

\node [draw, fit=(w1i)(w1o), inner sep=2.5pt, label=right:{$w$}] {};
\node [draw, fit=(w2i)(w2o), inner sep=2.5pt, label=right:{$w'$}] {};

\draw (u) -- (w1i);
\draw (w1o) -- (w2o);
\draw (w2i) -- (v);

\end{tikzpicture}
\qquad
\begin{tikzpicture}

\node [draw, dot, name=u, label=left:{$u$}] at (0,0) {};
\node [draw, dot, name=v, label=left:{$v$}] at (0,1) {};

\node [draw, circle, name=w1i, scale=0.6] at (1,0) {};

\node [draw, diamond, fill=black, name=w1o, scale=0.6] at (1,1) {};

\node [draw, fit=(w1i)(w1o), inner sep=2.5pt, label=right:{$w$}] {};

\draw (u) -- (w1i);
\draw (w1o) -- (v);
\end{tikzpicture}
\qquad
\begin{tikzpicture}

\node [draw, dot, name=u, label=left:{$u$}] at (0,0) {};
\node [draw, dot, name=v, label=left:{$v$}] at (0,1) {};

\node [draw, rectangle, fill=black, name=w1i, scale=0.8] at (1,0) {};

\node [draw, rectangle, fill=black, name=w1o, scale=0.8] at (1,1) {};

\node [draw, fit=(w1i)(w1o), inner sep=2.5pt, label=right:{$\tilde w$}] {};

\draw (u) -- (w1i);
\draw (w1o) -- (v);
\end{tikzpicture}
\caption{The four examples of paths $\gamma,\gamma',\gamma'',\tilde\gamma$ from Remark~\ref{rem:automorphs_real_case} are presented from left to right. 
In the middle two figures, $w$ and $w'$ carry types in $\vec{\CB}$, and so we use circles to indicate $\mcb{i}$ labels and diamonds for $\mcb{o}$ labels. 
In the final figure, since $\tilde{w}$ is typed by an element of $\CB^{o}$, we use squares to indicate both $\mcb{i}$ or $\mcb{o}$ -- the two squares can be interchanged in a tg-automorphism of $\tilde\gamma$.}\label{fig:gamma_real_case}
\end{figure}

\begin{remark}
The special consideration of palindromes appears similarly
in the study of loop-soups, see, e.g.~\cite[Sec.~2.5.1]{Werner_Powell_21_GFF}.
\end{remark}

For loops $\ell$ and paths $\gamma$ we write $\hat{\ell}(M) = \ell(M) / S(\ell)$ along with $\hat{\gamma}(M,v) = \gamma(M,v)/S(\gamma)$. 
Note that any $G \in {\mcb{G}}_{\R}(\mathcal{A},\mathcal{B})$ has 
\[
\frac{\mathcal{A}!\mathcal{B}! 2^{|\CB^{o}|}}
{\mathcal{P}(G)! \mathcal{L}(G)! \prod_{\gamma \in \mathcal{P}(G)} S(\gamma) 
\prod_{\ell \in \mathcal{L}(G)} S(\ell)}\;, 
\]
representatives in $\hat{\mcb{G}}_{\R}(\mathcal{A},\mathcal{B})$.
Combining this observation with 
Lemma~\ref{lem:integrate_higgs_onesite_real}  immediately gives the following analogue of Lemma~\ref{lemma:integrating_onesite_multiset}. 

\begin{lemma}\label{lemma:real_integrating_onesite_multiset}
With the notation above, and for any $\mcb{A}$ vector assignment $v$ and $\mcb{B}$-operator assignment $M$, one has 
\begin{equs}
{}&\frac{1}{\mathcal{A}!\mathcal{B}!}
\int_{H} 
\Big( 
\prod_{a \in \mathcal{A}} \vscal{v_{a},\phi}
\Big)
\Big(
\prod_{b \in \vec{\mathcal{B}}} \vscal{\phi, M_{b} \phi}
\Big)
\Bigg(
\prod_{b \in \CB^{o}}
\frac{ \vscal{\phi, M_{b} \phi}}{2}
\Bigg)\ 
\mrd \rho^{\lambda}(\phi)\\
{}&=
C^{\R,\lambda}_{|\mathcal{A}|/2 + |\mathcal{B}|}
\sum_{G \in \mcb{G}_{\R}(\mathcal{A},\mathcal{B})}
\frac{1}{\mathcal{P}(G)! \mathcal{L}(G)!}
\Big(
\prod_{\gamma \in \mathcal{P}(G)}
\hat{\gamma}(M,v)
\Big)
\Big(
\prod_{\ell \in \mathcal{L}(G)}
\hat{\ell}(M)
\Big)\;.
\end{equs}	
\end{lemma}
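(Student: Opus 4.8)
The plan is to deduce the identity from Lemma~\ref{lem:integrate_higgs_onesite_real} together with the count, stated just above, of how many ``labelled'' pairings in $\hat{\mcb{G}}_{\R}(\mathcal{A},\mathcal{B})$ lie in a given isomorphism class $G\in\mcb{G}_{\R}(\mathcal{A},\mathcal{B})$. The argument is the real-case analogue of the proof of Lemma~\ref{lemma:integrating_onesite_multiset}, with two extra bookkeeping points: a factor $2^{|\CB^{o}|}$ coming from the freedom to swap the $\mcb{o}$- and $\mcb{i}$-half-edges at vertices of self-loop type (which we forget), and the $\Z_{2}$ ``palindrome''/reversal automorphisms of paths and loops recorded in Remark~\ref{rem:automorphs_real_case}.

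First I would apply Lemma~\ref{lem:integrate_higgs_onesite_real} with $A=\mathcal{A}$, $B=\mathcal{B}$, treating the abstract elements of the two multisets (with their type maps into $\mcb{A},\mcb{B}$) as ordinary finite sets. This expresses $\int_{H}\big(\prod_{a\in\mathcal{A}}\vscal{v_{a},\varphi}\big)\big(\prod_{b\in\mathcal{B}}\vscal{\varphi,M_{b}\varphi}\big)\,\mrd\rho^{\lambda}(\varphi)$ as $C^{\R,\lambda}_{|\mathcal{A}|/2+|\mathcal{B}|}$ times $\sum_{G\in\hat{\mcb{G}}_{\R}(\mathcal{A},\mathcal{B})}\prod_{\gamma\in\mathcal{P}(G)}\gamma(M,v)\prod_{\ell\in\mathcal{L}(G)}\ell(M)$. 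Since the operator assignment is symmetric on $\CB^{o}$, each factor $\gamma(M,v)$ and $\ell(M)$ depends only on the isomorphism \emph{type} of $\gamma$, resp.\ of $\ell$, hence is constant along each isomorphism class of labelled pairing. I would also record the trivial identity $\prod_{b\in\vec{\mathcal{B}}}\vscal{\phi,M_{b}\phi}\,\prod_{b\in\CB^{o}}\tfrac{1}{2}\vscal{\phi,M_{b}\phi}=2^{-|\CB^{o}|}\prod_{b\in\mathcal{B}}\vscal{\phi,M_{b}\phi}$, so that the left-hand side of the lemma equals $2^{-|\CB^{o}|}/(\mathcal{A}!\,\mathcal{B}!)$ times the left-hand side of Lemma~\ref{lem:integrate_higgs_onesite_real}.

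The combinatorial heart is the representative count: every $G\in\mcb{G}_{\R}(\mathcal{A},\mathcal{B})$ has exactly $\mathcal{A}!\,\mathcal{B}!\,2^{|\CB^{o}|}\big/\big(\mathcal{P}(G)!\,\mathcal{L}(G)!\prod_{\gamma\in\mathcal{P}(G)}S(\gamma)\prod_{\ell\in\mathcal{L}(G)}S(\ell)\big)$ representatives in $\hat{\mcb{G}}_{\R}(\mathcal{A},\mathcal{B})$, which I would prove by orbit--stabiliser. Let $\Sigma$ be the group of type-preserving bijections of $\mathcal{A}\sqcup(\mathcal{B}\times\{\mcb{o},\mcb{i}\})$ that fix the canonical pairing $\{\{(b,\mcb{i}),(b,\mcb{o})\}:b\in\mathcal{B}\}$; by the definition of isomorphism in $\hat{\mcb{G}}_{\R}(\mathcal{A},\mathcal{B})$ the orbits of $\Sigma$ acting on $\hat{\mcb{G}}_{\R}(\mathcal{A},\mathcal{B})$ are precisely the isomorphism classes. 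A direct count gives $|\Sigma|=\mathcal{A}!\,\vec{\mathcal{B}}!\,\CB^{o}!\,2^{|\CB^{o}|}=\mathcal{A}!\,\mathcal{B}!\,2^{|\CB^{o}|}$: on $\mathcal{A}$ and on the $\vec{\mathcal{B}}$-pairs of half-edges one may only permute within each type (the canonical pairing forcing the $\mcb{o}$- and $\mcb{i}$-components to move together), while on the $\CB^{o}$-pairs one permutes the pairs within each type and, additionally, independently swaps $\mcb{o}\leftrightarrow\mcb{i}$ inside each pair --- the source of the factor $2^{|\CB^{o}|}$. The stabiliser of a representative of $G$ is $\mathrm{Aut}(G)$, and since an automorphism must permute the path components (resp.\ loop components) of equal type and act internally on each one, it factors as $|\mathrm{Aut}(G)|=\mathcal{P}(G)!\prod_{\gamma\in\mathcal{P}(G)}S(\gamma)\cdot\mathcal{L}(G)!\prod_{\ell\in\mathcal{L}(G)}S(\ell)$, where the $S(\gamma),S(\ell)$ (which may now equal $2$) absorb exactly the reversal symmetries of Remark~\ref{rem:automorphs_real_case}. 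Orbit--stabiliser then yields the count.

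To conclude I would re-sum: grouping $\sum_{\hat{\mcb{G}}_{\R}(\mathcal{A},\mathcal{B})}$ by isomorphism class, pulling out the (constant) representative count, dividing by $\mathcal{A}!\,\mathcal{B}!$, and inserting the $2^{-|\CB^{o}|}$ from the $\CB^{o}$-normalisation, all the factors $\mathcal{A}!\,\mathcal{B}!\,2^{|\CB^{o}|}$ cancel; the surviving $S(\gamma)^{-1}$ and $S(\ell)^{-1}$ convert the bare $\gamma(M,v),\ell(M)$ into $\hat{\gamma}(M,v),\hat{\ell}(M)$, and the prefactor $C^{\R,\lambda}_{|\mathcal{A}|/2+|\mathcal{B}|}$ is already in the required form, giving the right-hand side of the lemma. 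I expect the only genuinely delicate point to be checking that $\mathrm{Aut}(G)$ splits as claimed --- specifically that, for vertices of self-loop type, the internal reversal automorphisms are counted once inside the $S(\gamma),S(\ell)$ and are not double-counted against the $2^{|\CB^{o}|}$ in $|\Sigma|$; this is the real-case refinement, via the $\Z_{2}$-factors of Remark~\ref{rem:automorphs_real_case}, of the distinguishable-to-indistinguishable overcounting correction used in the proof of Lemma~\ref{lemma:integrating_onesite_multiset}.
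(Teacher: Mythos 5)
Your proposal is correct and follows essentially the same route as the paper: the paper simply states the representative count $\mathcal{A}!\mathcal{B}!2^{|\CB^{o}|}/\big(\mathcal{P}(G)!\mathcal{L}(G)!\prod_{\gamma}S(\gamma)\prod_{\ell}S(\ell)\big)$ as an observation and combines it with Lemma~\ref{lem:integrate_higgs_onesite_real}, exactly as you do. Your orbit--stabiliser justification of that count (including the source of the $2^{|\CB^{o}|}$ and the splitting of $\mathrm{Aut}(G)$) supplies detail the paper leaves implicit, and correctly identifies the one delicate point.
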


\subsubsection{Lattice paths and loops in the real case}

Returning to our graph $\mathbb{G}$, we will write $\mcb{P}_{\R}$ for the set of all lattice paths. 
In the real case we enforce that
\begin{enumerate}[label=(\roman*)]
\item every lattice path contains at least one edge (and edges can be traversed multiple times),
\item lattice paths are unoriented and can traverse each edge in $\mathbb{E}$ either along or opposite to its orientation, and 
\item any lattice path consisting of more than one edge cannot start/end with an edge in $\mathbb{E}^{o}$. 
\end{enumerate}
To define $\mcb{P}_{\R}$ precisely, we need to introduce some notation.  
We define a new set of edges where $\vec{\mathbb{E}}$ is doubled, that is
\[
\mathbb{E}^{\leftrightarrow}
=
\big\{ 
e^{1} : e \in \vec{\mathbb{E}}
\big\}
\sqcup
\big\{ 
e^{-1} : e \in \vec{\mathbb{E}}
\big\}
\sqcup
\mbbE^{o}
\;.
\]
We often abuse notation and write, for $e \in \mbbE^{o}$, $e = e^1 = e^{-1} \in \mbbE^{\leftrightarrow}$. 
We fix endpoints for elements $e\in \mbbE^{\leftrightarrow}$ by $(\underline{e^{1}},\overline{e^1}) = (\underline{e},\overline{e})$ and $(\underline{e^{-1}},\overline{e^{-1}}) = (\overline{e},\underline{e})$. 

We write $\widehat{\mcb{P}}_{\R}$ for the set of finite, non-empty sequences $\omega = (e^{p_1}_{1},\dots,e^{p_n}_{n})$ of edges in $\mathbb{E}^{\leftrightarrow}$ with the property that for $1 \le i < n$, one has $\overline{e^{p_i}_{i}} = \underline{e^{p_{i+1}}_{i+1}}$. 

We then set $\mathring{\mcb{P}}_{\R}$ to be the set of all sequences in $\widehat{\mcb{P}}_{\R}$ which are either of length $1$ or which have the property that they neither start nor end with an edge $e \in \mathbb{E}^{o}$.

For $\omega \in \mathring{\mcb{P}}_{\R}$, we denote by $[\omega]$ the set of endpoints of $\omega$ in $\mbbV$, i.e. for $\omega$ as above, $[\omega] = \{\underline{e^{p_1}_{1}}, \overline{e^{p_n}_n}\}$.

We define a ``reversal'' involution $\iota$ on $\widehat{\mcb{P}}_{\R}$. 
More precisely, for $\omega$ as above we set $\iota(\omega) = (f^{q_1}_{1},\dots,f^{q_n}_{n})$ where, for $1 \le j \le n$, $f_{j} = e_{n - j +1}$ and $q_{j} = - p_{n-j+1}$. 
Finally the set of lattice paths $\mcb{P}_{\R}$ is defined as the quotient of $\mathring{\mcb{P}}_{\R}$ under $\iota$. 

We will write $\mcb{L}_{\R}$ for the set of all lattice loops.
In this subsection
\begin{enumerate}[label=(\roman*)]
\item every lattice loop consists of at least one edge (and edges can be traversed multiple times),
\item lattice loops do not have a distinguished basepoint, and
\item lattice loops are unoriented, in particular they can traverse edges in $\mbbE$  along or opposite to their orientation. 
\end{enumerate}
To precisely define $\mcb{L}_{\R}$, we let $\sim$ be the smallest equivalence relation on $\widehat{\mcb{P}}_{\R}$ closed under both the reversal $\iota$ and cyclic permutations. 
We then set $\mcb{L}_{\R} = \widetilde{\mcb{P}}_{\R} / \sim$ where $\widetilde{\mcb{P}}_{\R} = \{ \omega \in  \widehat{\mcb{P}}_{\R}: |[\omega]| = 1\}$. 

We introduce symmetry factors $S(\cdot)$ for lattice paths and loops.
Given a path $\gamma$ of length $1$, $\gamma$ is canonically associated to a single edge $e \in \mbbE$ so we write $\gamma = \gamma_{e}$ in this case, and we set $S(\gamma_{e}) = 1$ if $e \in \vec{\mbbE}$ and $S(\gamma_{e}) =2$ if $e \in \mbbE^o$. 
For larger lattice paths and loops we will refer to symmetry factors of associated typed pairings. 
Our sets of types $\mcb{A}$ and $\mcb{B}$ will be given by disjoint copies of $\mbbE$. 
Our decomposition of $\mcb{B}$ is given by setting $\vec{\mcb{B}} = \vec{\mbbE}$ and $\mcb{B}^{o} = \mbbE^{o}$. 

Given a path $\gamma \in \mcb{P}_{\R}$ of length $n \ge 2$ we fix $\mathring{\mcb{P}}_{\R} \ni \omega = (e^{p_1}_{1},\dots,e^{p_n}_{n}) \in \gamma$ and define a typed pairing $\mcb{g}(\omega) \in \hat{\mcb{G}}_{\R}(A,B)$
with $A = \{1,n\}$, $B=\{2,\dots,n-1\}$, and type map $j \mapsto e_{j}$, by setting
\begin{equ}
\mcb{g}(\omega)
=
\begin{cases}
\Big\{
\{ 1,\mcb{p}_{2}
\},
\{n,\mcb{q}_{n-1}\}
\Big\} 
\sqcup
\Big\{
\{\mcb{q}_{j},\mcb{p}_{j+1}\}
\Big\}_{j=2}^{n-2} & \text{ if } n > 2\;,\\
\Big\{ \{1,2\} \Big\} & \text{ if } n=2\;,
\end{cases}
\end{equ}
where,\footnote{Note that if $n =2$, we have $B = \emptyset$.} for $2 \le j \le n-2$, we set
\begin{equ}\label{eq:edge_to_vertex}
(\mcb{p}_{j},\mcb{q}_{j}) 
=
\begin{cases}
\big( (j,\mcb{i}), (j,\mcb{o}) \big) & \text{ if }p_{j} = 1 \text{ or }e_{j} \in \mbbE^{o}\;,\\
\big((j,\mcb{o}),(j,\mcb{i}) \big) & \text{ if }p_{j}=-1 \text{ and }e_j \in \vec{\mbbE}\;.
\end{cases}
\end{equ}
We write $\mcb{g}(\gamma)$ for the tg-isomorphism class of $\mcb{g}(\omega)$ which we note does not depend on our choice of $\omega$. 
We set $S(\gamma)$ to be the number of tg-automorphisms of $\mcb{g}(\gamma)$. 
Recall in this case that $S(\gamma)\in\{1,2\}$ (Remark~\ref{rem:automorphs_real_case}).

For a loop $\ell$ of length $n \ge 1$ we fix  $\widehat{\mcb{P}}_{\R} \ni \omega = (e^{p_1}_{1},\dots,e^{p_n}_{n}) \in \ell$ and define a typed pairing $\mcb{g}'(\omega) \in \hat{\mcb{G}}_{\R}(A,B)$, where $A = \emptyset$, $B = \{1,\dots,n\}$,  and $j \mapsto e_{j}$ is again the type map. 
This pairing is given by
\[
\mcb{g}'(\omega) = 
\Big\{ \{ \mcb{q}_{j},  \mcb{p}_{j}\} \Big\}_{j=1}^{n-1}
\sqcup
\Big\{
\{\mcb{q}_{n},\mcb{p}_{1}\}
\Big\}
\;,
\]
with $(\mcb{p}_{j},\mcb{q}_{j})$ defined as in \eqref{eq:edge_to_vertex} for $1 \le j \le n$. 
We write $\mcb{g}(\ell)$ for the tg-isomorphism class of $\mcb{g}'(\omega)$ and define $S(\ell)$ to be the number of tg-automorphisms of $\mcb{g}(\ell)$.

\begin{example}
We give example computations of symmetry factors for lattice loops. 
 
Suppose that $e \in \mathbb{E}^{o}$, then the loop $\ell_{e,n}$ of length $n$ that contains the sole representative $\omega = \underbrace{(e,\dots,e)}_{n \text{ times} }$ has $S(\ell_{e,n}) = |C_{n} \times \Z_{2}|$.
This is because one can perform any cyclic permutation on $B = \{1,2,\dots,n\}$
- that is mapping, for some fixed $k$ and for every $i$, both $(i ,\mcb{i}) \mapsto (i + k,\mcb{i})$ and $(i, \mcb{o}) \mapsto (i+k,\mcb{o})$ with addition modulo $n$ - and one can also compose any such a permutation with a simultaneous flip $(j,\mcb{i}) \leftrightarrow (j,\mcb{o})$ for every $1 \le j \le n$.

If we had a loop $\ell$ containing the representative $\omega =(e^1,\tilde{e},e^{-1})$ where $e \in \vec{\mbbE}$ and $\tilde{e} \in \mbbE^{o}$ then one would have $S(\ell) = 2$. 
Referring to $\mcb{g}'(\omega)$, the one non-trivial tg-automorphism would be given by flipping $(1,\mcb{i}) \leftrightarrow (3,\mcb{i})$, $(1,\mcb{o}) \leftrightarrow (3,\mcb{o})$, and $(2,\mcb{i}) \leftrightarrow (2,\mcb{o})$. 
\end{example}

\subsubsection{Loop expansion in the real case}
We set $\mathbf{G}_{\R} = \multiset{\mcb{P}_{\R} \sqcup \mcb{L}_{\R}}$ to be the set of all multisets of lattice paths and loops.
Given $G \in \mathbf{G}_{\R}$, we write $\mathcal{P}(G)$ for the multiset of paths in $G$ and $\mathcal{L}(G)$ for the multiset of loops in $G$. 
Given $x \in \mbbV$, we write $G_{x}$ for the number of times $x$ is incident to an edge in $G$ with multiplicity,
i.e. according to the formula~\eqref{eq:def_G_x},
which we note is again well-defined in the sense that it is independent of the representatives $\gamma,\ell$ chosen in the sums.

We now introduce a notion of $\mathbb{E}$ operator assignment for the real case. 
\begin{definition}
An $\mbbE$ operator assignment is a tuple $M = (M_{e})_{e \in \mbbE}$ with $M_{e} \in L(H,H)$ and where we additionally impose, for $e \in \mbbE^{o}$, that $M_{e}$ is symmetric. 
\end{definition}

Given an $\mbbE$ operator assignment $M$ and a lattice loop $\ell \in \mcb{L}_{\R}$ which, as an equivalence class, contains the sequence $(e^{i_1}_{1},\dots,e^{i_n}_{n})$, we define 
\[
\ell(M) = \Trace \Big( \prod_{j=1}^{n} M_{j} \Big)
\enskip
\text{ and }
\enskip
\hat{\ell}(M) = \ell(M)/S(\ell)
\]
where $M_{j} = M_{e_{j}}$ if $i_{j}=1$ and $M_{j} = M_{e_{j}}^{T}$ if $i_{j}=-1$.
Note that $\ell(M)$ and $\hat{\ell}(M)$ are well-defined in the sense that they are independent of the choice of sequence used.

The following is our main result on loop expansions in the real case that mirrors Theorem~\ref{thm:loop_expansion}.
\begin{theorem}\label{thm:real_loop_expansion} 
Let $M = (M_{e})_{e \in \mbbE}$ be an $\mbbE$ operator assignment.
Then
\begin{equs}
Z(M) \eqdef 
\int_{H^{\mbbV}}&
\exp\Big(
\sum_{e \in \vec{\mbbE }}
\vscal{\phi_{\underline{e}},M_{e}\phi_{\overline{e}}}
+
\frac{1}{2}
\sum_{e \in \mbbE^o}
\vscal{\phi_{\underline{e}},M_{e}\phi_{\overline{e}}}
 \Big)\ 
 \prod_{x\in\mbbV} \mrd  \rho^{\lambda_{x}} (\phi_x)\\
{}&=
\sum_{\mathcal{L} \in \mathbf{L}_{\R}}
\frac{1}{\mathcal{L}!}
\Big(
\prod_{x \in \mbbV}
C^{\R,\lambda_{x}}_{\mathcal{L}_{x}/2}
\Big)
\prod_{\ell \in \mathcal{L}}
\hat{\ell}(M)\;,
\end{equs}
where $\mathbf{L}_{\R} = \multiset{\mcb{L}_{\R}}$. 
\end{theorem}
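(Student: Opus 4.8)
The plan is to mimic the proof of Theorem~\ref{thm:loop_expansion} in the real setting, running the same site-by-site induction but using the real single-site integration lemma (Lemma~\ref{lemma:real_integrating_onesite_multiset}) as the re-routing tool in place of its complex analogue. Concretely, I would state and prove a real analogue of Lemma~\ref{lem:full_loop_expansion}: for every $\bar{\mbbV}\subset\mbbV$,
\begin{equs}
\int_{H^{\mbbV}}&
\exp\Big(
\sum_{e \in \vec{\mbbE}} \vscal{\phi_{\underline{e}},M_{e}\phi_{\overline{e}}}
+\tfrac12\sum_{e\in\mbbE^o}\vscal{\phi_{\underline{e}},M_{e}\phi_{\overline{e}}}
 \Big)
\prod_{x\in\mbbV} \mrd  \rho^{\lambda_{x}} (\phi_x)
\\
=&
\sum_{ G \in \mathbf{G}_{\R}(\bar{\mbbV})}
\frac{
\prod_{x \in \bar{\mbbV}}
C^{\R,\lambda_{x}}_{G_{x}/2}}
{G!}
\prod_{\ell \in \mathcal{L}(G)}
\hat{\ell}(M)
\int_{H^{\mbbW}}
\prod_{\gamma \in \mathcal{P}(G)} \hat\gamma(M,\phi)\;
\prod_{x\in \mbbW}  \mrd \rho^{\lambda_x}(\phi_x)\;,
\end{equs}
where $\mathbf{G}_{\R}(\bar{\mbbV})=\multiset{\mcb{P}_{\R}(\bar{\mbbV})\sqcup\mcb{L}_{\R}(\bar{\mbbV})}$ is defined exactly as in the complex case (paths with all intermediate vertices in $\bar{\mbbV}$, endpoints outside, and loops entirely inside $\bar{\mbbV}$), and $\hat\gamma(M,\phi)=\gamma(M,\phi)/S(\gamma)$. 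Theorem~\ref{thm:real_loop_expansion} is the case $\bar{\mbbV}=\mbbV$, for which $\mathcal{P}(G)=\emptyset$ and the statement collapses to the claimed formula.

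The induction structure is identical to the complex case. The base case $\bar{\mbbV}=\emptyset$ follows by expanding the exponential: each factor $\vscal{\phi_{\underline e},M_e\phi_{\overline e}}$ for $e\in\vec{\mbbE}$, and $\tfrac12\vscal{\phi_{\underline e},M_e\phi_{\overline e}}$ for $e\in\mbbE^o$, becomes a single-edge lattice path with the correct symmetry factor $S(\gamma)\in\{1,2\}$ built into the conventions (recall we set $S=1$ for $e\in\vec{\mbbE}$ and $S=2$ for $e\in\mbbE^o$, and the $\tfrac12$ in the exponential exactly cancels against the resulting $1/S(\gamma)$). For the inductive step, fix $z\in\mbbW=\mbbV\setminus\bar{\mbbV}$ and partition the lattice paths in $\mcb{P}_{\R}(\bar{\mbbV})$ incident to $z$ into those ending (but not starting) at $z$, those both starting and ending at $z$, and those starting (but not ending) at $z$; here "starting" and "ending" refer to the two (unordered) endpoints, and the "both ends at $z$" class plays the role of $\mcb{B}$ (so this class supplies the $\mcb{B}^o$-type operators). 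Applying Lemma~\ref{lemma:real_integrating_onesite_multiset} with $\mcb{A}$, $\vec{\mcb{B}}$, $\mcb{B}^o$ taken to be the respective sets of such lattice paths integrates out $\phi_z$ and produces a sum over $G\in\mcb{G}_{\R}(\mathcal{P}_f,\mathcal{P}_l,\mathcal{P}_i)$. One then constructs a bijection $F$ from $\bigsqcup \mcb{G}_{\R}(\mathcal{P}_f,\mathcal{P}_l,\mathcal{P}_i)$ onto the set of multisets of lattice paths and loops through $z$ in $\mcb{P}_{\R}(\bar{\mbbV}\sqcup\{z\})\sqcup\mcb{L}_{\R}(\bar{\mbbV}\sqcup\{z\})$ by concatenating the abstract paths/loops of $G$ along their shared vertex $z$; surjectivity and injectivity follow by "snipping at $z$" exactly as in the complex proof. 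Combining this with the pieces of $G$ not incident to $z$ gives the $\bar{\mbbV}\sqcup\{z\}$ version of the formula.

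The main obstacle — and the only place where the argument genuinely differs from the complex case — is bookkeeping the symmetry factors $S(\gamma)$ and $S(\ell)$ through the concatenation bijection $F$, because in the real setting both paths and loops can be palindromic, so $S\in\{1,2\}$ for paths and the loop automorphism group sits inside $C_n\times\Z_2$ rather than $C_n$ (Remark~\ref{rem:automorphs_real_case}). The key claim to verify is that if $\tilde\ell$ is an abstract loop in $G$ whose vertices are lattice paths $\omega_1,\dots,\omega_k$ (each of which may itself be palindromic), and $\ell$ is the lattice loop in $\mbbG$ obtained by concatenating them, then $S(\tilde\ell)=S(\ell)$; and similarly $S(\tilde\gamma)=S(\gamma)$ for paths. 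For loops this amounts to observing that a typed-pairing automorphism of $\mcb{g}(\ell)$ either (i) cyclically permutes the blocks $\omega_j$ (possibly with orientation reversal of the whole loop), which corresponds to a type-and-orientation-preserving automorphism of $\tilde\ell$, or (ii) acts within a single block, but such an action must simultaneously be compatible across all blocks and in fact is forced to reverse the orientation of the whole concatenated loop, hence again corresponds to the $\Z_2$-reversal of $\tilde\ell$ combined with a reversal-symmetry of each $\omega_j$ — and this is possible precisely when each $\omega_j$ is a palindrome, which is exactly when $S(\omega_j)$ already accounts for that factor; the conventions were set up (via the $\hat\gamma=\gamma/S(\gamma)$ weighting carried on the internal paths) so that these factors telescope correctly. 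I would isolate this symmetry-factor identity as a short lemma, prove it by the case analysis just sketched using the explicit pairings $\mcb{g}(\omega),\mcb{g}'(\omega)$ from Section~\ref{subsec:real_case}, and then the rest of the induction goes through verbatim as in the complex case, with the final identity $\tfrac{\mcA!\mcB!2^{|\mcb{B}^o|}}{\mcP(G)!\mcL(G)!\prod S(\gamma)\prod S(\ell)}$ counting representatives in $\hat{\mcb{G}}_{\R}$ (used in Lemma~\ref{lemma:real_integrating_onesite_multiset}) matching up cleanly with the concatenation.
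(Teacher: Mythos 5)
Your overall strategy is the paper's: it proves the theorem via exactly the inductive site-by-site integration you describe (its Lemma~\ref{lem:real_full_loop_expansion} is your real analogue of Lemma~\ref{lem:full_loop_expansion}, with the base case from expanding the exponential, the inductive step via Lemma~\ref{lemma:real_integrating_onesite_multiset}, the concatenation bijection $F$, ``snipping'' at $z$ for injectivity and surjectivity, and the identity $S(F(\eta))=S(\eta)$ for the symmetry factors). The one place where your description goes astray is the assignment of types when integrating out $z$. Since lattice paths are unoriented in the real case, there is no meaningful three-way split into ``ending but not starting'', ``both'', and ``starting but not ending'' at $z$, and $\mcb{G}_{\R}$ does not take three arguments: the correct setup is $\mcb{A}=\mcb{P}_{if}(z)$, the paths with \emph{exactly one} endpoint equal to $z$, and $\mcb{B}=\mcb{P}_{l}(z)$, the paths with both endpoints equal to $z$. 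The decomposition $\mcb{B}=\vec{\mcb{B}}\sqcup\mcb{B}^{o}$ is then \emph{not} by endpoint data at all but by palindromy: $\mcb{B}^{o}=\{\gamma\in\mcb{P}_{l}(z):S(\gamma)=2\}$. This is forced on you for two reasons: (i) Lemma~\ref{lemma:real_integrating_onesite_multiset} requires $M_{b}$ to be symmetric for $b\in\mcb{B}^{o}$, and the product of edge operators along a path from $z$ to $z$ is symmetric precisely when the path is a palindrome; (ii) for $\gamma\in\vec{\mcb{B}}$ the associated operator is only defined up to transpose, so one must fix a preferred orientation $\omega(\gamma)$ and adopt a convention matching the traversal direction through $(b,\mcb{i})$, $(b,\mcb{o})$ to insertions of $\omega(\gamma)$ or $\iota(\omega(\gamma))$ in the concatenation. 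With that correction, the factor of $\tfrac12$ carried by $\hat{\gamma}$ for palindromic $\gamma\in\mathcal{P}_{l}$ on the left-hand side of the inductive identity matches the $\tfrac12$ attached to $\mcb{B}^{o}$-types in Lemma~\ref{lemma:real_integrating_onesite_multiset}, and the symmetry-factor bookkeeping in your last paragraph is exactly the check the paper performs.
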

As in the complex case we prove this theorem using an inductive integration of sites.
Given $\bar{\mbbV} \subset \mbbV$, we define $\mathbf{G}_{\R}(\bar{\mbbV}) = \multiset{\mcb{P}_{\R}(\bar{\mbbV}) \sqcup \mcb{L}_{\R}(\bar{\mbbV})}$ where $\mcb{P}_{\R}(\bar{\mbbV})$ and $\mcb{L}_{\R}(\bar{\mbbV})$ are defined analogously to the complex case, i.e. they are the set of lattice paths and lattice loops where endpoints (for the paths) must be in $\mbbV \setminus \bar{\mathbb{V}}$ and all other vertices visited must be in $\bar{\mbbV}$. 

We now state our inductive lemma for the real case, which gives Theorem~\ref{thm:real_loop_expansion} when we set $\bar{\mbbV} = \mbbV$. 

Given $\gamma \in \mcb{P}_{\R}$, an $\mbbE$ operator assignment $M$, and a vector $\phi = (\phi_{x})_{x \in A}$ for some $A \subset \mbbV$ containing the endpoints of $\gamma$, we define 
\[
\gamma(M,\phi) = \vscal{\varphi_{a},  M_{1} \cdots M_{n} \phi_{b}}
\enskip
\text{ and }
\enskip
\hat{\gamma}(M,\phi)  =  \gamma(M,\phi)  /S(\gamma)
\]
where $\mathring{\mcb{P}}_{\R} \ni (e_{1}^{i_{1}},\dots,e_{n}^{i_{n}}) \in \gamma$ is some choice of orientation for $\gamma$ and, for $1 \le j \le n$,  
 $M_{j} = M_{e_{j}}$ if $i_{j}=1$, $M_{j} = M_{e_{j}}^{T}$ if $i_{j}=-1$, $a = \underline{e^{i_1}_{1}}$, and $b = \overline{e^{i_n}_n}$. 
Observe that
$\gamma(M,\phi)$ and $\hat{\gamma}(M,\phi)$ are indeed well-defined in that they does not depend on our choice of orientation.

The core of the induction is to use Lemma~\ref{lemma:real_integrating_onesite_multiset} as a ``re-routing lemma'' where, when integrating out the spin at $z \in \mbbV$, we will have $\CA$ as a multiset of lattice paths that have two distinct endpoints with precisely one of them given by $z$, $\CB$ as a multiset of lattice paths for which both endpoints are given by $z$.
\begin{lemma}\label{lem:real_full_loop_expansion} 
Let $M$ be an $\mbbE$ operator assignment.
Then, for any $\bar{\mbbV} \subset \mbbV$, we have 
\begin{equs}
\int_{H^{\mbbV}}&
\exp\Big(
\sum_{e \in \vec{\mbbE }}
\vscal{\phi_{\underline{e}},M_{e}\phi_{\overline{e}}}
+
\frac12
\sum_{e \in \mbbE^o}
\vscal{\phi_{\underline{e}},M_{e}\phi_{\overline{e}}}
 \Big)\ 
\prod_{x\in\mbbV} \mrd  \rho^{\lambda_{x}} (\phi_x) \\
=&
\sum_{ G \in \mathbf{G}_{\R}(\bar{\mbbV})}
\frac{
\prod_{x \in \bar{\mbbV}}
C^{\R,\lambda_{x}}_{G_{x}/2}}
{G!}
\prod_{\ell \in \mathcal{L}(G)}
\hat{\ell}(M)
\int_{H^{\mbbW}}
\prod_{\gamma \in \mathcal{P}(G)} \hat{\gamma}(M,\phi)\ 
\prod_{x\in \mbbW}  \mrd \rho^{\lambda_x}(\phi_x) \;,
\end{equs}
where we write $\mbbW = \mbbV \setminus \bar{\mbbV}$. 
\end{lemma}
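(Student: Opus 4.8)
The plan is to mirror the proof of Lemma~\ref{lem:full_loop_expansion}, running the same induction on $|\bar{\mbbV}|$ (the number of already-integrated vertices) and flagging the places where the unoriented character of real lattice paths and loops — the reversal involution $\iota$ and the resulting symmetry factors $S(\gamma),S(\ell)$ — forces extra bookkeeping relative to the complex case. For the base case $\bar{\mbbV}=\emptyset$ the set $\mathbf{G}_{\R}(\emptyset)$ contains no lattice loops and its lattice paths are precisely the single edges of $\mbbE$; expanding the exponential produces $\sum_{\mcb{n}\in\N^{\mbbE}}\frac{1}{\mcb{n}!}$ times the integral of $\prod_{e\in\mbbE}\vscal{\phi_{\underline{e}},M_{e}\phi_{\overline{e}}}^{\mcb{n}_{e}}$, each factor coming from $\mbbE^{o}$ carrying an additional $\frac12$ (the interchange of sum and integral being licensed by the better-than-Gaussian tails of the $\lambda_x$). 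Since $S(\gamma)=1$ for a length-one path indexed by $e\in\vec{\mbbE}$ and $S(\gamma)=2$ for one indexed by $e\in\mbbE^{o}$, the product $\prod_{\gamma\in\mathcal{P}(G)}\hat{\gamma}(M,\phi)$ reproduces exactly those factors $\frac12$, and with $G!=\mcb{n}!$ this is the right-hand side for $\bar{\mbbV}=\emptyset$.

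For the inductive step I would fix $\bar{\mbbV}$ and $z\in\mbbW=\mbbV\setminus\bar{\mbbV}$, split the lattice paths of $\mcb{P}_{\R}(\bar{\mbbV})$ into those having exactly one endpoint at $z$ (a set playing the role of the complex $\mcb{P}_{i}(z)\sqcup\mcb{P}_{f}(z)$, now merged because paths are unoriented) and those having both endpoints at $z$ (``loop-paths at $z$''), and reduce, exactly as in the complex case, to the re-routing identity
\begin{equs}
{}&\sum_{\mathcal{P}_{e},\,\mathcal{P}_{l}}\frac{1}{\mathcal{P}_{e}!\,\mathcal{P}_{l}!}\int_{H}\Big(\prod_{\gamma\in\mathcal{P}_{e}}\hat{\gamma}(M,\phi)\Big)\Big(\prod_{\gamma\in\mathcal{P}_{l}}\hat{\gamma}(M,\phi)\Big)\,\mrd\rho^{\lambda_{z}}(\phi_{z})\\
{}&\qquad=\sum_{G\in\hat{\mathbf{G}}(z)}\frac{C^{\R,\lambda_{z}}_{G_{z}/2}}{G!}\prod_{\ell\in\mathcal{L}(G)}\hat{\ell}(M)\prod_{\gamma\in\mathcal{P}(G)}\hat{\gamma}(M,\phi)\;,
\end{equs}
where $\mathcal{P}_{e}$, $\mathcal{P}_{l}$ range over multisets of the two classes of paths above and $\hat{\mathbf{G}}(z)=\multiset{\hat{\mcb{P}}(z)\sqcup\hat{\mcb{L}}(z)}$ collects the lattice paths and loops of $\mcb{P}_{\R}(\bar{\mbbV}\sqcup\{z\})$, $\mcb{L}_{\R}(\bar{\mbbV}\sqcup\{z\})$ passing through $z$. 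To prove this I would apply Lemma~\ref{lemma:real_integrating_onesite_multiset} with $\mcb{A}$ the set of paths with one endpoint $z$ (attaching to such a $\gamma$ the vector $v_{\gamma}$ determined by $\gamma(M,\phi)=\vscal{v_{\gamma},\phi_{z}}$) and $\mcb{B}$ the set of loop-paths at $z$, the latter decomposed as $\vec{\mcb{B}}\sqcup\mcb{B}^{o}$ with $\mcb{B}^{o}$ the \emph{palindromic} loop-paths: these have $S(\gamma)=2$ and $\gamma(M,\phi)=\vscal{\phi_{z},M_{\gamma}\phi_{z}}$ with $M_{\gamma}$ symmetric, so $\hat{\gamma}(M,\phi)$ is precisely a $\mcb{B}^{o}$-contribution $\frac12\vscal{\phi_{z},M_{\gamma}\phi_{z}}$, while $\vec{\mcb{B}}$ collects the rest (which have $S(\gamma)=1$); since a path with two distinct endpoints can never be a palindrome, every path with one endpoint $z$ has $S=1$, so no hats are lost there. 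The bijection $F$ from isomorphism classes of the graphs produced by Lemma~\ref{lemma:real_integrating_onesite_multiset} onto $\hat{\mathbf{G}}(z)$ is concatenation at $z$, its inverse being ``snipping'' each lattice path or loop at every passage through $z$; concatenation is unambiguous because a lattice path is already a $\iota$-class and the $\{\mcb{i},\mcb{o}\}$ half-edge data of a $\vec{\mcb{B}}$-vertex fixes the traversal direction of that loop-path (for a $\mcb{B}^{o}$-vertex both directions coincide). The matching of the scalar coefficients and factorials is then identical to the complex case, using $G_{z}=|\mathcal{P}_{e}|+2|\mathcal{P}_{l}|$.

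The one genuinely new point — and the step I expect to be the main obstacle — is the compatibility of symmetry factors under concatenation: for every loop $\tilde{\ell}$ of the re-routing graph, if $\ell=F(\tilde{\ell})$ is the concatenated lattice loop one must check $S(\tilde{\ell})=S(\ell)$, and similarly $S(\tilde{\gamma})=S(\gamma)$ for paths. Here $\tilde{\ell}$ is a typed unoriented loop on constituent loop-paths whose automorphism group sits inside $C_{k}\times\Z_{2}$ (cf.~Remark~\ref{rem:automorphs_real_case}): the cyclic part corresponds to rotating $\ell$ and the $\Z_{2}$ part to the reversal $\iota$ of $\ell$, and one checks that type-preserving automorphisms of $\tilde{\ell}$ are in bijection with typed-pairing automorphisms of $\mcb{g}(\ell)$. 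The subtle case is exactly the one for which the $\vec{\mcb{B}}/\mcb{B}^{o}$ dichotomy was introduced: a rotation of $\tilde{\ell}$ carrying a $\vec{\mcb{B}}$-constituent to itself with its orientation reversed is \emph{not} type-preserving on the $\{\mcb{i},\mcb{o}\}$ data, so it is not an automorphism and is not counted, whereas for a $\mcb{B}^{o}$-constituent such a reversal is allowed but the internal reversal symmetry has already been divided out in $\hat{\gamma}$; getting this interplay (and the analogous palindrome discussion for paths, whose $S$ can become $2$ only when the concatenated path is in fact a loop-path) exactly right is the delicate part. Finally, the observation of Remark~\ref{rem:self-loops_internal} transcribes verbatim — a self-loop edge of $\mbbG$ appears only as a length-one loop-path, hence never as the first or last segment of a concatenated path — so $F$ indeed lands in $\mcb{P}_{\R}(\bar{\mbbV}\sqcup\{z\})$ rather than merely in $\widehat{\mcb{P}}_{\R}$.
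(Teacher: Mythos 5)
Your proposal is correct and follows essentially the same route as the paper's proof: the same base case via expansion of the exponential, the same re-routing identity at a single site $z$ proved by applying Lemma~\ref{lemma:real_integrating_onesite_multiset} with $\mcb{A}$ the paths having exactly one endpoint at $z$ and $\mcb{B}$ the loop-paths at $z$ split into palindromic ($S=2$, symmetric operator, hence $\mcb{B}^{o}$) and non-palindromic parts, the same concatenation/snipping bijection $F$, and the same verification that $S(F(\eta))=S(\eta)$. Your explicit remarks that palindromic loop-paths yield symmetric operators $M_\gamma$ and that paths with two distinct endpoints cannot be palindromes are exactly the points the paper relies on.
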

\begin{proof}
Similarly to the proof of Lemma~\ref{lem:full_loop_expansion}, the base case of the induction, when $\bar{\mbbV} = \emptyset$, follows from expanding the exponential and observing that $\mcb{P}_{\R}(\emptyset) = \big\{ \gamma_{e} = \{(e^1),(e^{-1})\}: e \in \mbbE \big\}$ and $\mcb{L}_{\R}(\emptyset)= \emptyset$. 

For the inductive step, we fix $\bar{\mbbV} \subset \mathbb{V}$ and $z \in \mbbW = \mathbb{V} \setminus \bar{\mbbV}$.
We define $\mcb{P}_{if}(z)$ to be the set of all lattice paths in $\mcb{P}_{\R}(\bar{\mbbV})$ which have $z$ as precisely one endpoint and $\mcb{P}_{l}(z)$ to be the set of all lattice paths in $\mcb{P}_{\R}(\bar{\mbbV})$ that have $z$ as both their endpoints.   

We again write $\mathbf{P}_{if}(z) = \multiset{\mcb{P}_{if}(z)}$ and $\mathbf{P}_{l}(z)= \multiset{\mcb{P}_{l}(z)}$.
We set $\hat{\mcb{P}}(z)$ to consist of all lattice paths in $\mcb{P}_{\R}(\bar{\mbbV} \sqcup \{z\})$ that pass through $z$,
$\hat{\mcb{L}}(z)$ to be the set of all lattice loops in $\mcb{L}_{\R}(\bar{\mbbV} \sqcup \{z\})$ that pass through $z$, and write $\hat{\mathbf{G}}(z) = \multiset{\hat{\mcb{P}}(z) \sqcup \hat{\mcb{L}}(z)}$. 
Proving our inductive step comes down to proving
\begin{equs}[eq:induc_equality_real]
{}&
\sum_{ \mathcal{P}_{if} \in \mathbf{P}_{if}(z)}
\sum_{ \mathcal{P}_{l} \in \mathbf{P}_{l}(z)}
\frac{1}
{\mathcal{P}_{if}! \mathcal{P}_{l}! }
\int_{H}
\Big(
 \prod_{\gamma^a \in \mathcal{P}_{if}} \gamma^a(M,\varphi)
\Big)
\Big( 
\prod_{\gamma^b \in \mathcal{P}_{l}} \widehat{\gamma^b}(M,\varphi) 
\Big)
\mrd \rho^{\lambda_z}(\varphi_z)  \\
{}& =
\sum_{G \in \hat{\mathbf{G}}(z)}
\frac{C^{\R,\lambda_{z}}_{G_{z}/2}}{G!}
\prod_{\ell \in \mathcal{L}(G)}
\hat{\ell}(M)
\prod_{\gamma \in \CP(G)} \hat{\gamma}(M,\phi)\;.
\end{equs}
Note that, for $\gamma^a \in \mathcal{P}_{if}$, $S(\gamma^{a}) = 1$. 

To match the left- and right-hand sides of \eqref{eq:induc_equality_real} , we apply  Lemma~\ref{lemma:real_integrating_onesite_multiset} to the left side and define a bijection  $F\colon \mcb{G}_{\R}(z) \rightarrow  \hat{\mathbf{G}}(z)$ where
\[
\mcb{G}_{\R}(z) 
=
\bigsqcup_{\mathcal{P}_{if} \in  \mathbf{P}_{if}(z)}
\bigsqcup_{ \mathcal{P}_{l} \in \mathbf{P}_{l}(z)}
\mcb{G}_{\R}(\mathcal{P}_{if},\mathcal{P}_{l})\;.
\]
In our application of Lemma~\ref{lemma:real_integrating_onesite_multiset}, we take $\mcb{A} = \mcb{P}_{if}(z)$ and $\mcb{B} = \mcb{P}_{l}(z)$, which we recall are sets of lattice paths (and not lattice loops).
We decompose $\mcb{B}$ into $\mcb{B}^{o} = \{ \gamma \in \mcb{P}_{l}(z): S(\gamma) = 2\}$ and $\vec{\mcb{B}} = \mcb{B} \setminus \mcb{B}^{o}$.
Moreover, for each $\gamma \in \vec{\mcb{B}}$, we make a choice of one of its two orientations $\mathring{\mcb{P}}_{\R} \ni \omega(\gamma) \in \gamma$.  

Given $\tilde{G} \in \mcb{G}_{\R}(z)$, we define $F(\tilde{G})$ by turning the loops and paths of $\tilde{G}$ into lattice paths and loops via concatenation. 
The paths in $\mathcal{P}_{if}$ have distinct endpoints so there is no ambiguity in how to concatenate them to adjacent paths.  
Regarding concatenating lattice paths associated to the elements of $\mathcal{P}_{l}$, there is no ambiguity for how to do this for paths types in $\mcb{B}^{o}$ since they only have one orientation.
On the other hand, while path types in $\gamma \in \vec{\mcb{B}}$ have two orientations we have chosen a preferred orientation $\omega(\gamma) \in \gamma$. 
Then our definition of $F$ is finished by adopting the following convention: we associate the traversal $\rightarrow (b,\mcb{i}) - (b,\mcb{o}) \rightarrow$ in $\tilde{G}$ to an insertion of $\omega(\gamma(b))$ into our path/loop of lattice paths, and similarly we associate the traversal $\rightarrow (b,\mcb{o}) - (b,\mcb{i}) \rightarrow$ to an insertion of $\iota(\omega(\gamma(b)))$. 
See Example~\ref{ex:real_clarification}.

To see that $F$ as described above is indeed a bijection, first note that, given $G \in \hat{\mathbf{G}}(z)$, we obtain $\mathcal{P}_{if}(G) \in \mathbf{P}_{if}(z)$ and $ \mathcal{P}_{l}(G) \in \mathbf{P}_{l}(z)$ by looking at lattice paths and loops in $G$  and
``snipping'' them whenever they pass through $z$. 
Injectivity and surjectivity follow from the fact that there is precisely one tg-isomorphism $\tilde{G} \in \mcb{G}_{\R}(\mathcal{P}_{if}(G),\mathcal{P}_{l}(G))$ with $F(\tilde{G}) = G$. 

The identity~\eqref{eq:induc_equality_real} then follows by observing that for any $\tilde{G} \in \mcb{G}_{\R}(z)$ and $\eta \in \mathcal{L}(\tilde{G}) \sqcup \mathcal{P}(\tilde{G})$, if we write $F(\eta)$ for the corresponding lattice path or loop in $\hat{\mcb{L}}(z) \sqcup \hat{\mcb{P}}(z)$, one indeed has $S(F(\eta)) = S(\eta)$ since there is a bijective correspondence of tg-automorphisms of $F(\eta)$ and $\mcb{g}(\eta)$. 
\end{proof}
\begin{example}\label{ex:real_clarification}
We give an example to further clarify the bijection $F$ in the proof above. 
Suppose $\mathcal{P}_{if} = \{a_1,a_2\}$ and $\mathcal{P}_{l} = \{b_1,b_2,b_3\}$, then Figure~\ref{fig:path_real_case} gives examples of possible $\tilde{G}$.
Fix $\gamma(a_1), \gamma(a_2) \in \mcb{P}_{if}(z)$ for the types of path of $a_1$ and $a_2$ and similarly $\gamma(b_1), \gamma(b_2), \gamma(b_3) \in \vec{\mcb{P}}_{l}(z)$ for the types of path of $b_1$, $b_2$, and $b_3$. 
Note that we could have $\gamma(a_i) = \gamma(a_j)$ or $\gamma(b_i) = \gamma(b_j)$ for $i \not = j$. 
We also fix an orientation for the types of path in $\vec{\mcb{P}}_{l}(z)$ as in the proof above. 
The $\tilde{G}$ on the left-hand side of Figure~\ref{fig:path_real_case} consists of a single path, and accordingly $F(\tilde{G})$ consists of a single lattice path $\gamma$ which can be constructed by concatenating  $ \gamma(a_1)$ with $\omega(\gamma(b_1))$ with $\omega(\gamma(b_2))$ with $\iota(\omega(\gamma(b_3)))$ with $\gamma(a_2)$.
Alternatively, one can obtain $\tilde{\gamma}$ by concatenating $\gamma(a_2)$ with $\omega(\gamma(b_3))$ with $\iota(\omega(\gamma(b_2))$ with $\iota(\omega(\gamma(b_1))$ with $\gamma(a_1)$. 

As mentioned earlier, there is no choice for the orientation for $\gamma(a_i)$ when doing this concatenation since the endpoints of these paths are distinct. 
Finally, we note that our choice of ``chosen orientation'' $\omega(\cdot)$ on the elements of $\vec{\mcb{P}}_{l}(z)$  doesn't really matter since in the end we sum over \emph{all} $\tilde{G} \in \mcb{G}_{\R}(\mathcal{P}_{if},\mathcal{P}_{l})$. 
\end{example}

\subsection{Qualitative diamagnetic inequality}

Although we do not use it later, we record the following inequality for the quantities $Z(M)$ defined in Theorems~\ref{thm:loop_expansion} and~\ref{thm:real_loop_expansion}. 
This result can be seen as a (qualitative) diamagnetic inequality.
\begin{corollary}\label{cor:weakDM}
Let $H$ be a complex or real finite dimensional Hilbert space, and suppose the single site measures $  ( \rho^{\lambda_{x}} : x \in \mathbb{V})$ on $H$ satisfy Assumption~\ref{assumption:single_site}.
Given an $\mathbb{E}$ operator assignment $M$,  we define another assignment $\widetilde{M}$ by setting, for each $e \in \mathbb{E}$, $\widetilde{M}_{e} = \|M_{e}\|  \mathrm{Id}_{H}$ where $\| \cdot \| $ denotes the operator norm. 

We then have  
\[
\big| Z(M) \big| \leq Z (\widetilde{M})\;.
\]
\end{corollary}
\begin{proof}
This is an immediate consequence of Theorems~\ref{thm:loop_expansion} and~\ref{thm:real_loop_expansion} and the fact that, for any loop $\ell$, $|\hat{\ell}(M)| \leq \hat{\ell}(\widetilde{M})$, which follows from the estimate $|\Trace(M_{e_1} \cdots M_{e_n}) | \leq \|M_{e_1}\cdots M_{e_n}\| \dim(H) \leq \|M_{e_1}\| \cdots \|M_{e_n}\| \dim(H)$.
\end{proof}
\begin{remark}
In the context of \textit{Abelian} gauge theory, Corollary~\ref{cor:weakDM} is a special case of~\cite[Thm.~4.1]{BFS79}. 
However, Corollary~\ref{cor:weakDM} applies to \textit{non-Abelian} models.
See also~\cite[Thm.~2.3]{BFS79} for a similar result in a general non-Abelian setting, although with specific periodic boundary conditions that we do not require.

Our proof strategy for Corollary~\ref{cor:weakDM} (and, more importantly, for the quantitative diamagnetic inequality of Theorem~\ref{thm:moment_bound_X} below),
is also different:
our proof relies on a loop expansion, while~\cite[Thm.~4.1]{BFS79} relies on a Fourier transform and~\cite[Thm.~2.3]{BFS79} relies on translation invariance and Osterwalder--Schrader positivity.
See also~\cite{HSU77, SchraderSeiler78,Simon76} for another strategy based on Kato's inequality.
The most similar proofs to ours that we could find are in~\cite[Sec.~3]{BFS79}, where loop expansions and Feynman--Kac formulae are employed, but appear restricted to quadratic potentials $V$.
(See also the start of Section~\ref{sec:loop} for a discussion of related results of~\cite{Seiler82} that we generalise.)
\end{remark}

\section{Diamagnetic inequality for Abelian gauge theories}
\label{sec:diamagnetic}

In this section, we interpret a pure Abelian gauge theory with Villain action as a Gaussian measure.
We show that, if this measure is weighted by a Radon--Nikodym derivative of `positive type' (see Remark~\ref{rem:generalisation}),
then it exhibits Gaussian tails of the same strength as the original unweighted measure.
Except for Lemma~\ref{lemma:specific_loop_expansion}, this section is independent of Section~\ref{sec:loop}.

\subsection{Gauge freedom and the axial gauge}
\label{subsec:discrete_GFF_def}

Recall the notation from Definition~\ref{def:loops}.
Since our structure group $U(1)$ is Abelian,
for every loop $\ell\in\mcb{L}$,
there exists a unique function $\ell \colon \mathbf{P}\to \Z$ (which we denote by the same symbol)
such that
\begin{equ}\label{eq:E_ell_def}
\hol(g,\ell) = \prod_{p \in \mathbf{P}} g(\d p)^{\ell(p)}\;,\quad \forall g\in\mcG\;.
\end{equ} 
Note that every $\ell=(\ell_0,\ldots,\ell_n)\in\mcb{L}$ can be identified with a continuous (unparametrised) path in $[0,1]^2$ of length $n2^{-N}$ that starts and finishes at $\ell_0=\ell_n$ and successively moves along the unique affine path connecting $\ell_i$ and $\ell_{i+1}$ for $i=0,\ldots, n-1$ in the respective order.
When viewed this way,
$\ell(p)$ in~\eqref{eq:E_ell_def} is the winding number of $\ell$ around any point enclosed by $p$.

The holonomies $\{\hol(g,\ell)\}_{\ell\in\mcb{L}}$ capture all the gauge-invariant information
about $g\in\mcG$, so we can identify the orbit space $\mcG/\mfG$ with the set of maps $U(1)^\mathbf{P}$.
To see this in another way, let us fix a maximal tree $\mfT$ in $\Lambda$.
See Figure~\ref{fig:max_tree_gammas} for
an example.
\begin{figure}[H]
\centering
\captionsetup[subfloat]{font=small}
\begin{tikzpicture}
[baseline=(zero.base),scale = 0.82]
\foreach \x in {1,...,5}{
  \foreach \y in {1,...,5}{
    \fill[black] (\x,\y) circle (0.1);
  }
}
\draw[very thick] (1,5)--++(0,-4);

\foreach \y in {1,...,5}{
\draw[very thick] (1,\y)--++(4,0);
}

\foreach \x in {2,...,5}{
\draw[very thick,dotted] (\x,1)--++(0,4);
}
\node[name=zero] at (1+1/4,1+1/4) {$0$};

%
\end{tikzpicture}
\caption{Example of $\mfT$ for $N=2$.
Solid lines indicate bonds in $\mfT$,
dotted lines indicate bonds not in $\mfT$.
Circles indicate points in $\Lambda$.
}\label{fig:max_tree_gammas}
\end{figure}
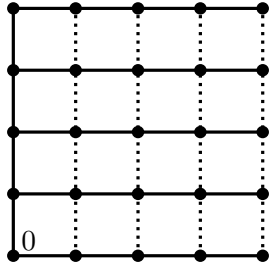
\begin{definition}
We say that $g\in\mcG$ is in the \emph{axial gauge} (with respect to $\mfT$) if $g_{xy}=1$ for all bonds $(x,y)$ in $\mfT$.
We denote by $\mcG_\mfT\subset \mcG$ the subset of gauge fields in the axial gauge.
\end{definition}
Observe that for every $g\in\mcG$, there exists a gauge transformation $u\in\mfG$ such that $g^u\in\mcG_\mfT$ (moreover $u$ is unique if we impose its value at one point, e.g. $u(0)=1$).
Then for any map $(X_p)_{p\in\mathbf{P}} \in U(1)^\mathbf{P}$, there exists a unique element $g\in\mcG_\mfT$
such that $g(\d p)=X_p$.
We can therefore identify $\mcG/\mfG$ with $\mcG_\mfT$ as sets.
Furthermore, we define
\begin{equ}[eq:Psi_def]
\Psi \colon \R^{\mathbf{P}} \to \mcG_\mfT\;,\quad \Psi\colon \{X_{p}\}_{p\in\mathbf{P}} \mapsto g
\end{equ}
where $g\in\mcG_\mfT$ satisfies $g(\d p) = \exp(\mbi X_p)$ for all $p\in\mathbf{P}$.

We will primarily factor out gauge invariance using the axial gauge, see also~\cite{Chatterjee16,Chevyrev19YM} and Section~\ref{subsec:axial} where this gauge is used for analytic purposes.
In the next subsection we introduce
the pure gauge theory (the discrete Yang--Mills measure) as a $\mcG_\mfT$-valued random variable.

\subsection{Bounds on holonomies}\label{subsec:hol_bounds}

Consider a family of independent centred Gaussians $Y=\{Y_p\}_{p\in \mathbf{P}}$ with variance $2^{-2N}$.
By a change of variable, one can see that, for all \textit{gauge-invariant} $f\colon \mcG\to \R$,
\begin{equ}\label{eq:pure_gauge}
\E f(\Psi Y) \propto \int_\mcG f(g) \prod_{p\in\mathbf{P}}Q(g(\partial p))\mrd g\;,
\end{equ}
where $Q$ is the  heat kernel on $U(1)$ at time $2^{-2N}$ as defined by~\eqref{eq:heat_kernel}.
Expression~\eqref{eq:pure_gauge} determines the classical Abelian pure gauge theory (the discrete Yang--Mills measure) of~\cite[Sec.~1.9]{Levy03} and~\cite[Thm.~1]{Levy06}.
We denote the law of $Y$ by $\nu$.

When studying the Abelian Higgs model~\eqref{eq:disc_YMH}, it is natural to introduce the $\R^\mathbf{P}$-valued random variable
$\{X_p\}_{p\in\mathbf{P}}$ with law $\mu$ given by
\begin{equ}\label{eq:mu_nu}
\mu(\mrd X) = Z^{-1}D(\Psi X)\nu(\mrd X)\;,
\end{equ}
where
\begin{equ}[eq:Dg_def]
D(g) =
\int_{\C^{\mathring\Lambda}} e^{\scal{\phi,\Delta_g\phi} - \int_{\mathring\Lambda} V(|\phi|)} \mrd \phi\;.
\end{equ}
The point in this definition is that,
since $\scal{\phi,\Delta_g\phi} - V(|\phi|)$ is gauge-invariant by~\eqref{eq:gauge_covar},
it follows from~\eqref{eq:pure_gauge} that, for any gauge-invariant function $f\colon\mcG\to \R$,
\begin{equ}\label{eq:law_g}
\E f(\g) = \E[f(\Psi X)]\;,
\end{equ}
where $\g$ is the random gauge field from~\eqref{eq:disc_YMH}.
For the next lemma, recall the definition of $\mathring{\mcb{L}}$ from Definition~\ref{def:loops}.
\begin{lemma}\label{lemma:specific_loop_expansion}
There exist real constants $(c_{\ell}: \ell \in \mathring{\mcb{L}})$ with $c_{\ell} \ge 0$ such that 
\begin{equ}\label{eq:D_def}
D(g) = \sum_{\ell\in\mathring{\mcb{L}}} c_\ell \Re [\hol(g,\ell)]\;.
\end{equ}
\end{lemma}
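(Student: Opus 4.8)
The plan is to reduce $D(g)$ to a special case of the real loop expansion of Theorem~\ref{thm:real_loop_expansion}. First I would set up the Hilbert space and graph. Take $H = \C$, viewed as a \emph{real} $2$-dimensional Hilbert space (so that the complex Higgs field $\phi$ becomes a real vector at each site), and let $\mathbb{G} = (\mbbV,\mbbE)$ have vertex set $\mbbV = \mathring\Lambda$ and edges recording the adjacency structure in $\mathring\Lambda$. Comparing the quadratic form $\scal{\phi,\Delta_g\phi}$ from~\eqref{eq:Delta_g} with the exponent in Theorem~\ref{thm:real_loop_expansion}, one reads off the operator assignment: for a bond $e = (x,y)$ with $x,y \in \mathring\Lambda$ the operator $M_e \in L(H,H)$ is (a scalar multiple of) multiplication by $g_{xy} \in U(1)$, acting on $\C \cong \R^2$ as a rotation; the diagonal part of $\Delta_g$ (the $-\phi_x$ terms) gets absorbed into the potential, i.e. into the radial measure $\lambda$. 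The single-site measure is $\mrd\rho^\lambda(\phi) = e^{-c|\phi|^2 - V(|\phi|)}\,\mrd\phi$ for the appropriate constant $c$ coming from the diagonal of $\Delta_g$; the hypothesis $\int_0^\infty e^{\alpha x^2 - V(x)}\,\mrd x < \infty$ for all $\alpha$ (imposed just before this lemma) is exactly what guarantees $\lambda$ has better-than-Gaussian tails, so Theorem~\ref{thm:real_loop_expansion} applies.

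Next I would invoke Theorem~\ref{thm:real_loop_expansion} directly. It gives
\[
D(g) = \sum_{\mathcal{L} \in \mathbf{L}_\R} \frac{1}{\mathcal{L}!}\Big(\prod_{x\in\mbbV} C^{\R,\lambda_x}_{\mathcal{L}_x/2}\Big)\prod_{\ell\in\mathcal{L}} \hat\ell(M)\;,
\]
where each $\hat\ell(M) = \Trace(\prod_{e\in\ell} M_e^{(\pm)})/S(\ell)$. Since each $M_e$ is a positive scalar times a rotation of $\R^2$ by angle $\log g_{xy}$ (and $M_e^T$ is the rotation by $-\log g_{xy}$, i.e.\ multiplication by $\bar g_{xy}$), the product $\prod_{e\in\ell} M_e^{(\pm)}$ is a positive scalar $\kappa_\ell \ge 0$ times multiplication by $\hol(g,\ell') \in U(1)$, where $\ell'$ is the corresponding lattice loop in $\mathring\Lambda$ traversed with the recorded orientations. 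The trace over $\R^2$ of multiplication by $z \in U(1)$ is $2\Re z$. Hence $\hat\ell(M) = (2\kappa_\ell / S(\ell))\,\Re[\hol(g,\ell')]$. Collecting terms: each lattice loop in the combinatorial sense corresponds to an element $\ell \in \mathring{\mcL}$ (up to reparametrisation/orientation, which is exactly the quotient defining $\mcb{L}_\R$), and summing the non-negative coefficients $\frac{1}{\mathcal{L}!}\prod_x C^{\R,\lambda_x}_{\mathcal{L}_x/2}\cdot\prod (2\kappa/S)$ over all multisets $\mathcal{L}$ that reduce to a single copy of $\ell$ — wait, more carefully: $D(g)$ is a sum over multisets of loops, each contributing a product of $\Re$-holonomies, but a product of several $\Re[\hol(g,\ell_i)]$ is \emph{not} of the form in~\eqref{eq:D_def}. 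The resolution is that a product $\prod_i \Re[\hol(g,\ell_i)] = \prod_i \tfrac12(\hol(g,\ell_i) + \hol(g,\bar\ell_i))$ expands, via~\eqref{eq:E_ell_def} (abelianness), into a sum of terms $\Re[\hol(g,\ell)]$ where $\ell$ is a concatenation of the $\ell_i$ with signs — each such term is again $\Re$ of a single holonomy with a positive coefficient $2^{-\#\{\text{loops}\}}$, possibly merged with its conjugate. So one further expansion, using only that $U(1)$ is abelian so that concatenation of loops corresponds to addition in $\Z^\mcP$, puts $D(g)$ in the claimed form with all coefficients manifestly $\ge 0$.

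The main obstacle I anticipate is purely bookkeeping rather than conceptual: matching the real-Hilbert-space/transpose formalism of Section~\ref{subsec:real_case} with the $U(1)$-holonomy language of Section~\ref{subsec:discrete_GFF_def}, and in particular verifying that the orientation/reversal data on lattice loops in $\mcb{L}_\R$ (the $e^{\pm 1}$ markings and the $\iota$ involution) precisely matches taking $\hol(g,\ell)$ versus its complex conjugate, so that the trace over $\R^2$ always produces $\Re[\hol(g,\ell)]$ with the correct loop $\ell \in \mathring{\mcL}$. One must also check that absorbing the diagonal of $\Delta_g$ into $\lambda$ does not spoil positivity of the coefficients $C^{\R,\lambda}_j$ — but $C^{\R,\lambda}_j = K^\R_j \int_0^\infty s^{2j}\,\mrd\lambda(s) \ge 0$ automatically since $\lambda$ is a positive measure and $K^\R_j > 0$. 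So no genuine difficulty remains; the lemma is essentially a specialisation of Theorem~\ref{thm:real_loop_expansion} combined with abelianness via~\eqref{eq:E_ell_def}.
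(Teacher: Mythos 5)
Your proposal is correct, but it routes through Theorem~\ref{thm:real_loop_expansion} (the real case, with $H=\C$ viewed as $\R^2$), whereas the paper specialises Theorem~\ref{thm:loop_expansion} (the complex case, with $H=\C$ as a one-dimensional complex Hilbert space), taking $M_e=g_e$ to be the off-diagonal part of $\Delta_g$ and absorbing the diagonal into the single-site measure $\mrd\rho^{\lambda_x}(\phi)=e^{-V(|\phi|)-4|\phi|^2}\mrd\phi$ exactly as you do. The two routes trade the same difficulty between different places: in the complex case the trace over $\C$ gives $\hat\ell(M)=\hol(g,\ell)/S(\ell)$ directly, so each multiset contributes a product of holonomies and one obtains $\Re$ at the end by pairing each multiset of oriented loops with its reversal (which carries the same non-negative coefficient since $g_{xy}=\bar g_{yx}$); in your real case each loop already contributes $2\Re[\hol(g,\ell)]/S(\ell)$, but you must then expand products of real parts, which is precisely the step you flag and resolve correctly. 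Both routes share the remaining (genuinely needed, though glossed over as ``immediate'' in the paper) observation that a product of holonomies of several loops in $\mathring\Lambda$ is the holonomy of a \emph{single} loop in $\mathring{\mcL}$ --- via~\eqref{eq:E_ell_def} and connectivity of $\mathring\Lambda$, concatenating through back-and-forth paths of trivial holonomy. One cosmetic difference: you restrict the vertex set to $\mathring\Lambda$ from the outset, while the paper keeps $\mbbV=\Lambda$ and kills boundary contributions by setting $\rho^{\lambda_x}=\delta_0$ there, so that $C^{\C,\lambda_x}_{\mcL_x/2}=0$ whenever a loop touches $\partial\Lambda$; the two are equivalent. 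Your check that the coefficients $C^{\R,\lambda}_j$ are non-negative is the right one and transfers verbatim to the complex constants.
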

\begin{proof}
We apply Theorem~\ref{thm:loop_expansion} with $\mbbG = (\mbbV,\mbbE)$ given by $\mbbV=\Lambda$ and $\mbbE = \bonds$, the set of oriented edges in $\Lambda$ (there are two edges $(x,y),(y,x)\in\mbbE$ for every pair of nearest neighbours $x,y\in\Lambda$).
For $x\in \Lambda\setminus \mathring{\Lambda}$ we set $\rho^{\lambda_x} = \delta_0$, the Dirac mass at $0$, and, following~\eqref{eq:inner_prod_def}-\eqref{eq:Delta_g}, for $x\in\mathring{\Lambda}$ we set
$\mrd \rho^{\lambda_x} (\phi) = e^{-2^{-2N}V(|\phi|) -4 |\phi|^2}\mrd \phi$.
We further take the operators $M_e=g_e$, which corresponds to the off-diagonal (with respect to $\mathring \Lambda$) part of $\Delta_g$ (remark that there is no scaling factor in front of $g_e$ because $2^{2N}$ in~\eqref{eq:Delta_g} is cancelled by $2^{-2N}$ in~\eqref{eq:inner_prod_def}).
Remark that $\lambda_x$ for $x\in\Lambda$ has better than Gaussian tails by the assumption at the end of Section~\ref{sec:model},
so we are in the setting of Section~\ref{sec:loop}.

It follows from Theorem~\ref{thm:loop_expansion} that
$
D(g) = \sum_{\mcL \in \mathbf{L}_{\C}}
\tilde{c}_\mathcal{L}
\prod_{\ell \in \mathcal{L}} \hol(g,\ell)
$,
where
\begin{equ}
\tilde{c}_{\mathcal{L}} = \frac{1}{\mathcal{L}!}
\prod_{x \in \Lambda}
C^{\C,\lambda_{x}}_{\mathcal{L}_{x}/2}
\prod_{\ell \in \mathcal{L}} S(\ell)^{-1} \geq 0
\end{equ}
and where we used that $\Trace$ on $\C$ is the identity
and that $\prod_{e \in \ell} g_{e} = \hol(g,\ell)$ by definition.

Next, observe that $D(g)$ is real, which follows from the definition~\eqref{eq:Dg_def} and the fact that $\scal{\phi,\Delta_g\phi}$ and $V(|\phi|)$ are real.
Since $\tilde{c}_\mathcal{L}$ is also real, we obtain
\begin{equ}[eq:Dg_Re]
D(g) = \sum_{\mcL \in \mathbf{L}_{\C}}
\tilde{c}_\mathcal{L} 
\Re \Big[\prod_{\ell \in \mathcal{L}} \hol(g,\ell)\Big]\;.
\end{equ}
Remark that $C_{\mathcal{L}_x/2}^{\C,\lambda_x}=0$ whenever $x\in\Lambda\setminus\mathring\Lambda$ and $\mathcal{L}_x > 0$ because $\lambda_x = \delta_0$ (recall the definition of $C^{\C,\lambda}_j$ from Lemma~\ref{lem:integrate_higgs_onesite}).
Therefore, denoting by $\mathring{\mathbf{L}}_\C$ the set of multisets of loops $\mathcal{L}$ for which $\mcL_x=0$ for all $x\in\Lambda\setminus\mathring\Lambda$, it follows that $\tilde{c}_\mathcal{L}=0$ for all $\mathcal{L}\in \mathbf{L}_\C\setminus \mathring{\mathbf{L}}_\C$,
so we can restrict the sum in~\eqref{eq:Dg_Re} to $\mathcal{L} \in \mathring{\mathbf{L}}_\C$.

Finally, for every $\mathcal{L}\in  \mathring{\mathbf{L}}_\C$,
we can find a single loop $l=l (\mathcal{L}) \in\mathring{\mcL}$
such that $\hol(g,l)=\prod_{\bar\ell\in \mathcal{L}} \hol(g,\bar\ell)$ (take any $l\in\mathring{\mcb{L}}$ such that its winding number $l(p)$ is $\sum_{\bar\ell\in \mathcal{L}}\bar\ell(p)$ for every plaquette $p$; such $l$ exists but is not unique).
Choosing such $l (\mathcal{L}) \in\mathring{\mcb{L}}$ for every $\mathcal{L} \in  \mathring{\mathbf{L}}_\C$, we obtain the conclusion by setting, for any $\ell \in \mathring{\mcb{L}}$, 
 \[
c_{\ell} = \sum_{\mathcal{L} \in \mathring{\mathbf{L}}_\C}  \mathbf{1}\{ l(\mathcal{L}) = \ell\} \tilde{c}_{\mathcal{L}} \ge 0\;.
\]
\end{proof} 
\begin{remark}\label{rem:generalisation}
In what follows, the only property of $D$ that we will use is that it is of `positive type', i.e. that $D$ is of the form~\eqref{eq:D_def} for some $c_\ell\geq 0$.
In particular, all the results in the remainder of the article remain true
if $D$ is replaced by any integrable function of positive type and the law of $\g$ (up to gauge invariance)
is defined by~\eqref{eq:law_g} and~\eqref{eq:mu_nu}.
The inequality in Theorem~\ref{thm:moment_bound_X} furthermore holds if $Y=\{Y_p\}_{p\in\mathbf{P}}$ is taken as an arbitrary centred Gaussian vector.
\end{remark}
The main result of this section, Theorem~\ref{thm:moment_bound_X} and its Corollaries~\ref{cor:Gauss_tail},~\ref{cor:moment_bound_hol}, and~\ref{cor:moment_bound_plaq_sum},
give Gaussian tail bounds on sums of the form $\sum_p\ell(p)X_p$ together with (sums of) log-holonomies around 
loops of $\g$.
Although we are ultimately interested in $\g$ rather than $X$, it turns out simpler to bound $X$ first and use this bound to estimate $\g$.
These bounds are what we call \textit{quantitative} diamagnetic inequalities.
\begin{definition}
For a loop $\ell\in\mcb{L}$, we denote by $\omega(\ell)$ the squared $L^2$-norm of the winding number of $\ell$, i.e.
\begin{equ}
\omega(\ell) \eqdef 2^{-2N}\sum_{p\in\mathbf{P}} \ell(p)^2\;,
\end{equ}
where we recall that $\ell \colon \mathbf{P}\to\Z$ is the unique map satisfying~\eqref{eq:E_ell_def}.
\end{definition}
\begin{remark}\label{rem:geom}
$\omega(\ell)$ depends only on $\ell$ as a loop in $[0,1]^2$
and not on the choice of scale $N$.
For example, if $\ell$ is a simple loop,
then $\omega(\ell)$ is the area enclosed by $\ell$.
\end{remark}
\begin{theorem}\label{thm:moment_bound_X}
Let $\ell\in\mcb{L}$ and define the random variables $B=\sum_{p\in \mathbf{P}} \ell(p) Y_p$
and
$A=\sum_{p\in \mathbf{P}} \ell(p) X_p$.
Then $B$ is a centred Gaussian with variance $\omega(\ell)$
and for all $\eta\in[0,\frac12\omega(\ell)^{-1})$
\begin{equ}\label{eq:diamagnetic}
\E
\exp(\eta A^2)
\leq
\E\exp(\eta B^2) = 
(1-2\eta\omega(\ell))^{-1/2}\;.
\end{equ}
\end{theorem}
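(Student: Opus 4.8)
The first assertion requires no work: $B=\sum_{p}\ell(p)Y_{p}$ is a linear combination of the independent centred Gaussians $Y_{p}$, hence centred Gaussian with variance $\sum_{p\in\mcP}\ell(p)^{2}\Var(Y_{p})=2^{-2N}\sum_{p}\ell(p)^{2}=\omega(\ell)$, and $\E\exp(\eta B^{2})=(1-2\eta\omega(\ell))^{-1/2}$ for $\eta\in[0,\tfrac12\omega(\ell)^{-1})$ is the standard Laplace transform of a (scaled) $\chi^{2}$ variable. So the substance is the inequality $\E\exp(\eta A^{2})\le\E\exp(\eta B^{2})$, which I would prove by linearising the quadratic exponential and performing an exponential-tilting computation against the Gaussian reference measure $\nu$.

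The plan is as follows. Fix $\eta\in[0,\tfrac12\omega(\ell)^{-1})$ and introduce an auxiliary standard real Gaussian $\zeta$, so that $\exp(\eta c^{2})=\E_{\zeta}[\exp(\sqrt{2\eta}\,\zeta c)]$ for every $c\in\R$. Recalling $\mu(\mrd X)=Z^{-1}D(\Psi X)\nu(\mrd X)$ from~\eqref{eq:mu_nu} and writing $A$ for the function $x\mapsto\sum_{p}\ell(p)x_{p}$, Fubini's theorem (valid because $2\eta\omega(\ell)<1$) gives
\[
Z\,\E\exp(\eta A^{2})=\E_{\zeta}\Big[\int_{\R^{\mcP}}\exp\big(\sqrt{2\eta}\,\zeta\textstyle\sum_{p}\ell(p)x_{p}\big)\,D(\Psi x)\,\nu(\mrd x)\Big].
\]
For a fixed real parameter $t$ (to be taken equal to $\sqrt{2\eta}\,\zeta$), the inner integral is an exponential tilt of the centred Gaussian $\nu$ by the linear functional $x\mapsto t\sum_{p}\ell(p)x_{p}$: this tilt multiplies the total mass by $\exp(\tfrac12 t^{2}\omega(\ell))$ and replaces $\nu$ by the law $\nu_{t}$ of $x=Y+m(t)$ with $m(t)_{p}=t2^{-2N}\ell(p)$. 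Hence the inner integral equals $\exp(\tfrac12 t^{2}\omega(\ell))\,\E_{\nu_{t}}[D(\Psi\,\cdot\,)]$, and everything reduces to bounding $\E_{\nu_{t}}[D(\Psi\,\cdot\,)]$ by $Z=\E_{\nu}[D(\Psi Y)]$, uniformly in $t\in\R$.

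This is the one place where positivity of the loop expansion enters. Using Lemma~\ref{lemma:specific_loop_expansion}, which in particular gives $\sum_{\ell'\in\mathring{\mcL}}c_{\ell'}=D(\bone)<\infty$ so that all sums below converge absolutely and interchange freely with $\E_{\nu_{t}}$ and $\E_{\nu}$, and the multiplicativity of holonomies together with the fact that $U(1)$ is Abelian (so $\hol(\Psi(Y+m(t)),\ell')=\hol(\Psi Y,\ell')\exp(\mbi\sum_{p}\ell'(p)m(t)_{p})$ and $\E_{\nu}[\hol(\Psi Y,\ell')]=\exp(-\tfrac12\omega(\ell'))$), one gets
\[
\E_{\nu_{t}}[D(\Psi\,\cdot\,)]=\sum_{\ell'\in\mathring{\mcL}}c_{\ell'}\,\Re\!\Big[e^{-\omega(\ell')/2}e^{\mbi t2^{-2N}\sum_{p}\ell'(p)\ell(p)}\Big]\le\sum_{\ell'\in\mathring{\mcL}}c_{\ell'}e^{-\omega(\ell')/2}=Z,
\]
where the inequality is simply $|\Re z|\le|z|$ combined with $c_{\ell'}\ge0$. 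Substituting $t=\sqrt{2\eta}\,\zeta$ and combining the displays yields $Z\,\E\exp(\eta A^{2})\le Z\,\E_{\zeta}[\exp(\eta\zeta^{2}\omega(\ell))]=Z\,(1-2\eta\omega(\ell))^{-1/2}$, the last equality being the same $\chi^{2}$ Laplace transform as before; dividing by $Z$ gives~\eqref{eq:diamagnetic}.

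The step I expect to be the real crux is the observation that exponential tilting by a linear functional turns $\E_{\nu}[D(\Psi\,\cdot\,)]$ into $\E_{\nu_{t}}[D(\Psi\,\cdot\,)]$ and that the latter is \emph{no larger} than the former precisely because the loop coefficients $c_{\ell'}$ are non-negative — the tilt only rotates each holonomy by a phase, which can never increase $\Re\hol$ once weighted by $c_{\ell'}\ge0$. Everything else (the Gaussian identities, the tilt formula, the absolute convergence from the integrability hypothesis on $V$ at the end of Section~\ref{sec:model}) is routine. As an alternative to the auxiliary-Gaussian trick one could instead condition $\cos(\sum_{p}\ell'(p)Y_{p})$ on $B$ and carry out the resulting bivariate-Gaussian computation directly to show $\E_{\nu}[e^{\eta B^{2}}\cos(\sum_{p}\ell'(p)Y_{p})]\le e^{-\omega(\ell')/2}\E_{\nu}[e^{\eta B^{2}}]$ for each $\ell'$, then sum against $c_{\ell'}\ge0$; the inequality there reduces to $\tfrac1{2\omega(\ell)}-\eta\le\tfrac1{2\omega(\ell)}$, which is the same constraint as $\eta\ge0$.
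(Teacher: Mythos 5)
Your proof is correct, but it takes a genuinely different route from the paper's. The paper first establishes a standalone decorrelation identity (Lemma~\ref{lem:decorrelation}): for jointly Gaussian centred $A,B$ one has the exact formula $\E[e^{\eta A^2}\cos B]=\exp\bigl[\tfrac{\sigma_{AB}^2}{2\sigma_A^2}\bigl(1-\tfrac{1}{1-2\eta\sigma_A^2}\bigr)\bigr]\E[e^{\eta A^2}]\E[\cos B]$, whose prefactor is $\le 1$ precisely when $\eta\ge 0$; it then writes $\E e^{\eta A^2}=\E\bigl[e^{\eta B^2}\sum_{\ell'}c_{\ell'}\cos H_{\ell'}\bigr]$, applies the lemma term by term, and uses the normalisation $\E\sum_{\ell'}c_{\ell'}\cos H_{\ell'}=1$. (This is exactly the "alternative" you sketch in your last paragraph.) Your main argument instead linearises $e^{\eta A^2}$ with an auxiliary Gaussian $\zeta$ and converts the problem into bounding the tilted mass $\E_{\nu_t}[D(\Psi\,\cdot\,)]$ uniformly in $t$, which follows because a Cameron--Martin shift only rotates each holonomy $\hol(\Psi Y,\ell')$ by a deterministic phase and $c_{\ell'}\ge 0$. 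The computations you need (the tilt formula, $\E_\nu[\hol(\Psi Y,\ell')]=e^{-\omega(\ell')/2}$, Tonelli for the nonnegative integrand, absolute summability from $\sum_{\ell'}c_{\ell'}=D(\bone)<\infty$) all check out, and the constraint $2\eta\omega(\ell)<1$ enters only through the final $\chi^2$ Laplace transform of $\zeta$. What each approach buys: the paper's route produces an exact multiplicative decorrelation constant (an identity, not just a bound), which quantifies how much is lost; your route is arguably more transparent about where positivity of the loop expansion is used — the Radon--Nikodym weight can only lose mass under any linear tilt — and avoids introducing the conditional-Gaussian decomposition, at the cost of the extra auxiliary randomisation.
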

For the proof of Theorem~\ref{thm:moment_bound_X}, we record the following lemma.
\begin{lemma}\label{lem:decorrelation}
Let $(A,B)$ be an $\R^2$-valued centred multivariate Gaussian and denote $\sigma_A^2=\E[A^2]$, $\sigma_B^2=\E[B^2]$, and $\sigma_{AB} = \E[AB]$.
Then for all $\eta<\frac12\sigma_A^{-2}$
\begin{equ}
\E[e^{\eta A^2} \cos B] =
\exp
\Big[
\frac{\sigma_{AB}^2}{2\sigma_A^2}
\Big(
1-\frac{1}{1-2\eta\sigma_A^2}
\Big)
\Big]
\E[e^{\eta A^2}]\E[\cos B]\;.
\end{equ}
\end{lemma}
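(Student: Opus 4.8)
The plan is to reduce everything to a single one–dimensional Gaussian integral by decomposing $B$ into its component along $A$ and an independent remainder. Write $B = \alpha A + B'$ where $\alpha = \sigma_{AB}/\sigma_A^2$ is chosen so that $\E[A B'] = 0$; since $A,B$ are jointly Gaussian, $B'$ is then independent of $A$, and $\E[(B')^2] = \sigma_B^2 - \sigma_{AB}^2/\sigma_A^2$. Using $\cos B = \Re\, e^{\mbi B} = \Re\, e^{\mbi \alpha A} e^{\mbi B'}$ and independence, we get
\begin{equ}
\E[e^{\eta A^2}\cos B] = \Re\Big( \E[e^{\eta A^2} e^{\mbi \alpha A}]\, \E[e^{\mbi B'}]\Big) = \E[e^{-(B')^2\!/2}]\cdot \Re\, \E[e^{\eta A^2 + \mbi \alpha A}]\;,
\end{equ}
where $\E[e^{\mbi B'}] = e^{-\frac12\E[(B')^2]}$ is real and positive. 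So the whole problem collapses to evaluating the one–dimensional integral $\E[e^{\eta A^2 + \mbi \alpha A}]$ for the centred Gaussian $A$ of variance $\sigma_A^2$.

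Next I would carry out that Gaussian computation directly. For $\eta < \tfrac12\sigma_A^{-2}$,
\begin{equ}
\E[e^{\eta A^2 + \mbi \alpha A}] = \frac{1}{\sqrt{2\pi}\,\sigma_A} \int_{\R} \exp\!\Big(-\frac{t^2}{2\sigma_A^2} + \eta t^2 + \mbi\alpha t\Big)\,\mrd t = (1-2\eta\sigma_A^2)^{-1/2} \exp\!\Big( -\frac{\alpha^2 \sigma_A^2}{2(1-2\eta\sigma_A^2)} \Big)\;,
\end{equ}
by completing the square with effective variance $\sigma_A^2/(1-2\eta\sigma_A^2)$; note the right-hand side is already real, so taking $\Re$ is harmless. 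In particular setting $\alpha = 0$ recovers $\E[e^{\eta A^2}] = (1-2\eta\sigma_A^2)^{-1/2}$. Dividing, and using $\E[\cos B] = e^{-\frac12\sigma_B^2}$ together with $\E[e^{-(B')^2/2}] = e^{-\frac12(\sigma_B^2 - \sigma_{AB}^2/\sigma_A^2)}$, one finds
\begin{equ}
\frac{\E[e^{\eta A^2}\cos B]}{\E[e^{\eta A^2}]\,\E[\cos B]} = \exp\!\Big(\frac{\sigma_{AB}^2}{2\sigma_A^2}\Big)\exp\!\Big(-\frac{\alpha^2\sigma_A^2}{2(1-2\eta\sigma_A^2)}\Big)\;,
\end{equ}
and substituting $\alpha^2\sigma_A^2 = \sigma_{AB}^2/\sigma_A^2$ gives exactly the claimed identity after factoring out $\sigma_{AB}^2/(2\sigma_A^2)$.

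There is essentially no hard step here; the only things to be careful about are the nondegeneracy hypothesis $\sigma_A^2 > 0$ (implicit in dividing by $\sigma_A^2$, and needed for the integral to converge when $\eta$ approaches the threshold) and the exchange of $\Re$ with the expectation, which is justified by absolute integrability once $\eta < \tfrac12\sigma_A^{-2}$. If one wants to avoid assuming $\sigma_A^2 \neq 0$, the degenerate case $A \equiv 0$ makes both sides trivially equal to $\E[\cos B]$. The only mild subtlety worth stating explicitly is that the decomposition $B = \alpha A + B'$ with $B' \perp A$ uses joint Gaussianity of $(A,B)$ to upgrade uncorrelatedness to independence; in our application $A,B$ are linear images of the same Gaussian vector $Y$, so this is automatic.
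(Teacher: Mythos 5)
Your proof is correct and follows essentially the same route as the paper: decompose $B$ into its projection $\alpha A$ onto $A$ plus an independent Gaussian remainder, then evaluate $\E[e^{\eta A^2+\mbi\alpha A}]$ via the tilted Gaussian (the paper phrases the last step as the identity $\E[e^{\eta A^2}f(A)]=\E[e^{\eta A^2}]\E[f(Z)]$ with $Z\sim\mathcal N(0,\sigma_A^2/(1-2\eta\sigma_A^2))$, which is the same completing-the-square computation). One typographical slip: you twice write $\E[e^{-(B')^2/2}]$ where you mean $e^{-\frac12\E[(B')^2]}$ (these are different quantities), but your accompanying sentence $\E[e^{\mbi B'}]=e^{-\frac12\E[(B')^2]}$ makes the intended (correct) value clear.
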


\begin{proof}
Since $B - A \sigma_{AB}/\sigma_A^2$ is a centred Gaussian independent of $A$ with variance $\sigma_B^2-\sigma_{AB}^2/\sigma_{A}^2$,
\begin{equ}\label{eq:A_cosB}
\E[e^{\eta A^2} \cos B ]
=
\E[e^{\eta A^2} e^{\mbi B}]
=
\E\big[
e^{\eta A^2} e^{\mbi A \sigma_{AB}/\sigma_A^2}\big]
e^{-\frac12
(\sigma_B^2-\sigma_{AB}^2/\sigma_A^2)}\;.
\end{equ}
For any bounded measurable $f\colon\R\to\R$, note that
\begin{equ}
\frac{\E [e^{\eta A^2}f(A)]}{\E[e^{\eta A^2}]}=
\mcZ^{-1}\int_\R e^{(\eta -\sigma^{-2}_A/2) x^2}f(x)\mrd x = \E[f(Z)]\;,
\end{equ}
where $\mcZ$ is such that the middle term is $1$ when $f\equiv 1$
and from which the second equality follows with
$Z\sim\mathcal{N}(0,1/(\sigma^{-2}_A-2\eta))$.
Hence
\begin{equ}\label{eq:A_Laplace}
\E[
e^{\eta A^2} e^{\mbi A \sigma_{AB}/\sigma_A^2}]
=
\E [e^{\eta A^2}] \exp\Big(-\frac{\sigma_{AB}^2}{2\sigma_A^2(1-2\eta\sigma_A^2)}
\Big)\;.
\end{equ}
The conclusion follows by combining~\eqref{eq:A_cosB} and~\eqref{eq:A_Laplace}.
\end{proof}

\begin{proof}[of Theorem~\ref{thm:moment_bound_X}]
The fact that $B$ is a centred Gaussian with $\Var(B)=\omega(\ell)$ follows from the fact that $\{Y_p\}_{p\in\mathbf{P}}$ is a family of independent centred Gaussians with variance $2^{-2N}$.
The equality in~\eqref{eq:diamagnetic} follows immediately.

To prove the inequality in~\eqref{eq:diamagnetic},
remark that, for any $x\in \R^\mathbf{P}$, one has $\Re [\hol(\Psi x,\ell)] = \cos(\sum_{p}\ell(p)x_p)$,
which follows from the identity~\eqref{eq:E_ell_def} and the definition of $\Psi$ in~\eqref{eq:Psi_def}.
Therefore, by~\eqref{eq:mu_nu} and Lemma~\ref{lemma:specific_loop_expansion},
\begin{equ}
\E e^{\eta A^2}
=
Z^{-1} \E\big[e^{\eta B^2} D(\Psi Y)\big]
=
Z^{-1} \E\Big[e^{\eta B^2}
\sum_{\ell'\in\mathring{\mcb{L}}}  c_{\ell'}\cos H_{\ell'} \Big]\;,
\end{equ}
where $H_{\ell'} = \sum_{p\in\mathbf{P}} \ell'(p)Y_p$ is a centred Gaussian.
By Lemma~\ref{lem:decorrelation},
\begin{equ}
\E[e^{\eta B^2}\cos H_{\ell'} ]\leq \E[e^{\eta B^2}]\E[\cos H_{\ell'}]\;.
\end{equ}
Using that $Z^{-1}\E \big[\sum_{\ell'\in\mathring{\mcb{L}}} c_{\ell'}\cos H_{\ell'} \big]= Z^{-1}\E D(\Psi Y) =1$, where the final equality is due to the fact that $\mu$
in~\eqref{eq:mu_nu} is a probability measure, the conclusion follows.
\end{proof}
We now state several corollaries of Theorem~\ref{thm:moment_bound_X}.
\begin{corollary}[Gaussian moments]\label{cor:Gauss_tail}
Let $\ell\in\mcb{L}$ and denote $ A = \sum_\ell \ell(p)X_p$.
Then for all $x\geq0$,
\begin{equ}\label{eq:Gauss_prob}
\P[| A|\geq x] \leq \sqrt 2 e^{-x^2/(4\omega(\ell))}\;.
\end{equ}
Furthermore, there exists a universal constant $C>0$ such that for all $q\geq 1$
\begin{equ}\label{eq:Gauss_moments}
\E[|A|^q]^{1/q} \leq C \sqrt{q\omega(\ell)}\;.
\end{equ}
\end{corollary}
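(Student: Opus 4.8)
The plan is to derive both parts of the corollary directly from the exponential moment bound~\eqref{eq:diamagnetic} of Theorem~\ref{thm:moment_bound_X} by a Chernoff-type argument, so there is essentially nothing to invent. First I would dispose of the degenerate case $\omega(\ell)=0$: since $\omega(\ell)=2^{-2N}\sum_{p\in\mcP}\ell(p)^2$ and $\ell(p)\in\Z$, this forces $\ell(p)=0$ for every $p$, hence $A=\sum_{p\in\mcP}\ell(p)X_p=0$ almost surely and both claims hold trivially. So assume $\omega(\ell)>0$ and fix the threshold $\eta=\frac{1}{4\omega(\ell)}$, which indeed lies in $[0,\tfrac12\omega(\ell)^{-1})$. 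Theorem~\ref{thm:moment_bound_X} then yields the key bound
\[
\E\big[\exp\!\big(A^2/(4\omega(\ell))\big)\big]\le\big(1-\tfrac12\big)^{-1/2}=\sqrt2\;.
\]

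For the tail estimate~\eqref{eq:Gauss_prob}, I would apply Markov's inequality to $e^{\eta A^2}$: for $x\ge0$,
\[
\P[|A|\ge x]=\P\big[e^{\eta A^2}\ge e^{\eta x^2}\big]\le e^{-\eta x^2}\,\E\big[e^{\eta A^2}\big]\le\sqrt2\,e^{-x^2/(4\omega(\ell))}\;,
\]
which is exactly~\eqref{eq:Gauss_prob}.

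For the moment estimate~\eqref{eq:Gauss_moments}, I would use the elementary inequality $t^{s}\le s^{s}e^{t}$, valid for all $s,t\ge0$ (obtained by maximising $t\mapsto t^{s}e^{-t}$ at $t=s$). Taking $t=A^2/(4\omega(\ell))$, $s=q/2$, and then expectations gives
\[
\E[|A|^q]=(4\omega(\ell))^{q/2}\,\E\Big[\big(A^2/(4\omega(\ell))\big)^{q/2}\Big]\le(4\omega(\ell))^{q/2}(q/2)^{q/2}\,\E\big[e^{A^2/(4\omega(\ell))}\big]\le\sqrt2\,(2q\omega(\ell))^{q/2}\;,
\]
so that $\E[|A|^q]^{1/q}\le 2^{1/(2q)}\sqrt{2q\omega(\ell)}\le 2\sqrt{q\omega(\ell)}$ for all $q\ge1$, and the universal constant $C=2$ works.

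The argument involves no genuine obstacle; the only things to watch are that $\eta=\frac{1}{4\omega(\ell)}$ is admissible in Theorem~\ref{thm:moment_bound_X} and that the case $\omega(\ell)=0$ is treated separately. An alternative for~\eqref{eq:Gauss_moments} is to integrate the tail bound via $\E[|A|^q]=q\int_0^\infty x^{q-1}\P[|A|\ge x]\,\mrd x$ and evaluate the resulting Gamma integral, but the route above avoids Stirling-type estimates on $\Gamma$.
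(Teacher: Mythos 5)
Your proposal is correct and follows essentially the same route as the paper: both take $\eta=\tfrac14\omega(\ell)^{-1}$ in Theorem~\ref{thm:moment_bound_X} to get $\E[e^{\eta A^2}]\le\sqrt2$ and then apply Markov's inequality for~\eqref{eq:Gauss_prob}. The only (harmless) difference is in extracting~\eqref{eq:Gauss_moments}: the paper expands $e^{\eta A^2}$ in a power series to bound the even moments $\E[A^{2k}]\le\sqrt2\,(4\omega(\ell))^k k!$ and invokes Stirling, whereas your pointwise inequality $t^{s}\le s^{s}e^{t}$ reaches all real $q\ge1$ directly and avoids both Stirling and the interpolation from even to general moments — a slightly cleaner finish.
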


\begin{proof}
By Theorem~\ref{thm:moment_bound_X}, $\E e^{\eta A^2} \leq (1-2\eta \omega(\ell))^{-1/2}$ for all $\eta\in[0,\frac12\omega(\ell)^{-1})$.
Taking $\eta=\frac14 \omega(\ell)^{-1}$, we obtain $\P[|A|>x] \leq  \sqrt 2 e^{-\eta x^2}$ by Markov's inequality, which proves~\eqref{eq:Gauss_prob}.
Moreover, expanding $e^{\eta A^2}$ in a power series
for the same choice of $\eta$, we obtain $\E[ A^{2k}] \leq \sqrt 2 (4\omega(\ell))^k k!$ for all $k\geq1$,
and~\eqref{eq:Gauss_moments} follows by Stirling's formula.
\end{proof}
We now come to consequences of Theorem~\ref{thm:moment_bound_X} for the gauge field marginal $\g$.
The following corollary shows that the $\log$ holonomies of $\g$ behave essentially no worse than those of pure gauge theory~\eqref{eq:pure_gauge} (we do not use this corollary later however).
\begin{corollary}\label{cor:moment_bound_hol}
There exists a universal constant $C>0$ such that, for all $q \geq 1$ and $\ell\in\mcb{L}$, 
\begin{equ}
\E[|\log \hol(\g,\ell)|^{q}]^{1/q}
\leq C\sqrt{q\omega(\ell)}\;.
\end{equ}
\end{corollary}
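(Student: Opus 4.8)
The plan is to reduce the holonomy estimate to the estimate on $A = \sum_{p\in\mcP}\ell(p)X_p$ already provided by Corollary~\ref{cor:Gauss_tail}. Recall from~\eqref{eq:E_ell_def} that, since $U(1)$ is Abelian, $\hol(g,\ell) = \prod_{p\in\mcP} g(\d p)^{\ell(p)}$ for every $g\in\mcG$, where $\ell\colon\mcP\to\Z$ is the winding-number function. Since $\g$ has the same law as $\Psi X$ on gauge-invariant functions (and $\log\hol(\cdot,\ell)$, viewed appropriately, is gauge-invariant up to the ambiguity of the branch of the logarithm), and since $\Psi X$ is the element of $\mcG_\mfT$ with $(\Psi X)(\d p) = e^{\mbi X_p}$, we have $\hol(\Psi X,\ell) = \exp\big(\mbi \sum_{p\in\mcP}\ell(p) X_p\big) = e^{\mbi A}$. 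The subtlety is that $|\log\hol(\g,\ell)|$ is the distance from $A$ to the nearest multiple of $2\pi$ (more precisely, $\log e^{\mbi A}\in[-\pi,\pi)$ with $|\log e^{\mbi A}|$ the representative of smallest absolute value), so in general $|\log\hol(\g,\ell)| \le |A|$ pointwise but not equality.

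First I would make this pointwise bound precise: for any real number $a$, writing $e^{\mbi a}$ and taking $\log$ as in Section~\ref{subsec:notation}, one has $|\log e^{\mbi a}| \le \min_{k\in\Z}|a - 2\pi k| \le |a|$. Hence $|\log\hol(\g,\ell)| \le |A|$ almost surely. Taking $q$-th moments and applying the moment bound~\eqref{eq:Gauss_moments} of Corollary~\ref{cor:Gauss_tail} gives
\begin{equ}
\E[|\log\hol(\g,\ell)|^q]^{1/q} \le \E[|A|^q]^{1/q} \le C\sqrt{q\,\omega(\ell)}
\end{equ}
for all $q\ge 1$, with $C$ the universal constant from that corollary. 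This is exactly the claimed inequality, so no further work is needed once the reduction is in place.

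The only genuine point requiring care — and the one I would expect to be the main obstacle, though it is minor — is justifying the identity in law $\E[f(\g)] = \E[f(\Psi X)]$ at the level of the (non-continuous, multivalued) functional $g\mapsto \log\hol(g,\ell)$. The clean way is to avoid invoking gauge-invariance of $\log\hol$ directly: instead note that $g\mapsto |\log\hol(g,\ell)|^q$ \emph{is} a genuine bounded gauge-invariant function of $g$ (the absolute value kills the branch ambiguity, and $\hol(g^u,\ell) = \hol(g,\ell)$ for all $u\in\mfG$ since $\ell$ is a loop), so~\eqref{eq:law_g} applies verbatim and $\E[|\log\hol(\g,\ell)|^q] = \E[|\log\hol(\Psi X,\ell)|^q]$. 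Then use $\hol(\Psi X,\ell) = e^{\mbi A}$ and the pointwise bound $|\log e^{\mbi A}|\le |A|$ as above. Everything else is a direct citation of Corollary~\ref{cor:Gauss_tail}.
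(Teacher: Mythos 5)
Your argument is exactly the paper's: the proof there reads ``Immediate from the bound $|\log e^{\mbi x}|\leq |x|$ and~\eqref{eq:Gauss_moments}'', i.e.\ identify $\hol(\g,\ell)$ in law with $e^{\mbi A}$ via gauge invariance and~\eqref{eq:law_g}, apply the pointwise bound, and cite Corollary~\ref{cor:Gauss_tail}. Your extra care about routing the gauge-invariance argument through the genuinely gauge-invariant function $g\mapsto|\log\hol(g,\ell)|^q$ is a correct and slightly more explicit version of what the paper leaves implicit.
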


\begin{proof}
Immediate from the bound $|\log e^{\mbi x}| \leq |x|$ and~\eqref{eq:Gauss_moments}.
\end{proof}
The following corollary controls an important gauge-invariant function of $\g$ and is the way that we use Theorem~\ref{thm:moment_bound_X} below.
%
\begin{corollary}\label{cor:moment_bound_plaq_sum}
There exists a constant $C>0$ such that for every $q \geq 1$ and $\ell\in\mcb{L}$
\begin{equ}\label{eq:sum_log_hol_bound}
\E\Big[
\Big|
\sum_{p\in\mathbf{P}} \ell(p) \log \g(\partial p)
\Big|^{q}
\Big]^{1/q}
\leq C q\sqrt{\omega(\ell)}\;.
\end{equ}
\end{corollary}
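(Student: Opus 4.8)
The plan is to reduce the bound to the Gaussian estimate on $A\eqdef\sum_{p\in\mcP}\ell(p)X_p$ already proved in Corollary~\ref{cor:Gauss_tail}, and to absorb the discrepancy between $\sum_p\ell(p)\log\g(\partial p)$ and $A$ on a super-exponentially unlikely event.

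First I would observe that $\sum_{p\in\mcP}\ell(p)\log\g(\partial p)$ is a gauge-invariant function of $\g$, since each $\g(\partial p)=\hol(\g,p)$ is a holonomy around a closed loop, so by~\eqref{eq:law_g} it suffices to bound $\E_\mu[|\sum_p\ell(p)\log e^{\mbi X_p}|^q]$, where $X=\{X_p\}_{p\in\mcP}$ has law $\mu$. Next I would introduce the event $E=\{|X_p|<\pi\text{ for all }p\in\mcP\}$; since $\log\colon U(1)\to[-\pi,\pi)$ sends $e^{\mbi x}$ to $x$, on $E$ one has $\log e^{\mbi X_p}=X_p$ for every $p$, hence $\sum_p\ell(p)\log e^{\mbi X_p}=A$ on $E$. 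On this event Corollary~\ref{cor:Gauss_tail} directly gives $\E[|A|^q\mathbf 1_E]\le\E[|A|^q]\le(C\sqrt{q\omega(\ell)})^q\le(Cq\sqrt{\omega(\ell)})^q$, using $\sqrt q\le q$ for $q\ge 1$.

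On the complement $E^c$ I would use only the crude pointwise bound $|\log e^{\mbi X_p}|\le\pi$ together with Cauchy--Schwarz: since $|\mcP|\le 2^{2N}$ and $\sum_p\ell(p)^2=2^{2N}\omega(\ell)$,
\[
\Big|\sum_p\ell(p)\log e^{\mbi X_p}\Big|\le\pi\sum_p|\ell(p)|\le\pi\sqrt{|\mcP|}\Big(\sum_p\ell(p)^2\Big)^{1/2}\le\pi\,2^{2N}\sqrt{\omega(\ell)}\,.
\]
For $\P_\mu[E^c]$ I would apply Corollary~\ref{cor:Gauss_tail} to each single-plaquette loop $p$ (for which $\omega(p)=2^{-2N}$) together with a union bound, obtaining $\P_\mu[E^c]\le|\mcP|\sqrt2\,e^{-\pi^2 2^{2N}/4}\le 2^{2N}\sqrt2\,e^{-\pi^2 2^{2N}/4}$, so that the contribution of $E^c$ to $\E[|\sum_p\ell(p)\log e^{\mbi X_p}|^q]$ is at most $(\pi\,2^{2N}\sqrt{\omega(\ell)})^q\cdot 2^{2N}\sqrt2\,e^{-\pi^2 2^{2N}/4}$. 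The remaining step is the elementary verification that $(\pi\,2^{2N})^q\cdot 2^{2N}\sqrt2\,e^{-\pi^2 2^{2N}/4}\le(Cq)^q$ for a universal $C$ and all $N\ge1$, $q\ge1$: writing $t=2^{2N}\ge4$ and taking $q$-th roots, one must bound $\pi t\,(t\sqrt2)^{1/q}e^{-\pi^2 t/(4q)}$, which after $(t\sqrt2)^{1/q}\le e^{(t+1)/q}$ and the substitution $s=t/q$ becomes $\le\pi e\,q\sup_{s>0}\big(se^{-(\pi^2/4-1)s}\big)\le Cq$, since $\pi^2/4-1>1$. Combining with the bound on $E$ gives $\E[|\sum_p\ell(p)\log\g(\partial p)|^q]\le(Cq\sqrt{\omega(\ell)})^q$, which is~\eqref{eq:sum_log_hol_bound}.

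The step I expect to be the main obstacle is the last one: one must confirm that the super-exponential smallness of $\P[E^c]$ (of order $e^{-c4^N}$) genuinely overwhelms the crude deterministic bound (which grows polynomially in $2^N$), uniformly in the lattice scale $N$, and that the surviving constant carries the \emph{linear} power of $q$ demanded by~\eqref{eq:sum_log_hol_bound} rather than $\sqrt q$; the optimization with $t=2^{2N}$, $s=t/q$ handles this. A minor point worth recording is that $X_p$ has a density, so $\{|X_p|<\pi\}$ and $\{|X_p|\ge\pi\}$ partition the probability space up to a null set.
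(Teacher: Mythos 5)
Your proposal is correct and follows essentially the same route as the paper: reduce to $X$ via \eqref{eq:law_g}, split on the event $\max_{p}|X_p|<\pi$, use the crude bound $|\sum_p\ell(p)|\le 2^{2N}\omega(\ell)^{1/2}$ off that event, and control its probability by a union bound over the $2^{2N}$ plaquettes using Corollary~\ref{cor:Gauss_tail}. The only (immaterial) difference is that you invoke the exponential tail \eqref{eq:Gauss_prob} and optimise directly, whereas the paper applies Markov with the $r$-th moment \eqref{eq:Gauss_moments} and chooses $r=2q+2$; both yield the linear factor $q$.
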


\begin{proof}
Recalling that $\g\eqlaw \Psi X$ by~\eqref{eq:law_g},
observe that
$\g(\partial p) = \exp(\mbi X_p)$ and thus
$\log \g(\partial p) = X_p$ for all $p\in\mathbf{P}$ on the event $\max_{p\in\mathbf{P}}|X_p| < \pi$.
Furthermore, for $r \geq 1$,
\begin{equs}
\P\Big[
\max_{p\in\mathbf{P}}
|X_p| \geq \pi
\Big]
\leq \sum_{p\in\mathbf{P}}\P[|X_p|\geq \pi]
&\leq \pi^{-r}\sum_{p\in\mathbf{P}}\E[|X_p|^{r}]
\\
&\leq \pi^{-r}C^r r^{r/2} 2^{N(2- r)}\;,
\end{equs}
where we used~\eqref{eq:Gauss_moments} and that $\#\mathbf{P} = 2^{2N}$ in the final bound.
Observe that
\begin{equ}
2^{-2N}\Big|\sum_{p\in\mathbf{P}} \ell(p)\Big| 
\leq \omega(\ell)^{1/2}
\end{equ}
by the power mean inequality, and therefore
\begin{equs}
\E\Big[
\Big|
\sum_{p\in\mathbf{P}} \ell(p) \log \g(\partial p)
\Big|^{q}
\Big]
&\leq
\E\Big[
\Big|
\sum_{p\in\mathbf{P}} \ell(p) X_p
\Big|^{q}
\Big]
\\
&\qquad +
\pi^{q}2^{2Nq}\omega(\ell)^{q/2}\P
\Big[
\max_{p\in\mathbf{P}} |X_p|>\pi
\Big]
\\
&\leq
C^q q^{q/2} \omega(\ell)^{q/2}
+ C^r \pi^{q-r}r^{r/2}\omega(\ell)^{q/2} 2^{N(2q+2-r)}\;,
\end{equs}
where we again used~\eqref{eq:Gauss_moments}
in the final bound.
Taking $r=2q+2$ concludes the proof.
\end{proof}

\begin{remark}
We expect that $q$ in the bound~\eqref{eq:sum_log_hol_bound} can be replaced by $\sqrt q$.
\end{remark}

\section{Lattice \texorpdfstring{$1$}{1}-forms}
\label{sec:1-forms}

In this section we recall the norms on lattice $1$-forms introduced in~\cite{Chevyrev19YM}.
See also~\cite{CCHS2d} for a continuum analogue of these spaces.

Let $\mcR^{(N)}$ denote the set of rectangles $r\subset [0,1]^2$ with corners in $\Lambda^{(N)}$.
We treat every $r\in\mcR^{(N)}$ as a subset of $\mathbf{P}^{(N)}$ containing $|r|2^{2N}$ plaquettes, where $|r|$ is the area of $r$.
We denote by $\partial r\in\mcb{L}$
the loop around the boundary of $r$
oriented counter-clockwise and based at the lower-left corner.
As for plaquettes, we use the shorthand $g(\partial r) = \hol(g,\partial r)$ for which we note $g(\partial r) = \prod_{p\in r} g(\partial p)$.

For $0 \leq n \leq N$,
we denote by $\mcT^{(n)}\subset\mcR^{(N)}$ the 
set of rectangles with corners in $\Lambda^{(n)} = \{k2^{-n} \,:\, k= 0,\ldots, 2^{n}\}^2$ and with dimensions $2^{-n}\times k2^{-n}$
or $k2^{-n}\times 2^{-n}$ for $1\leq k \leq 2^n$.
Elements of $\mcT^{(n)}$ are called $n$-thin rectangles.

Denoting by $e_1,e_2$ the canonical basis vectors of $\R^2$, let
\begin{equ}
\mcX^{(N)} = \{(x,h)\,:\,
h = k2^{-N}e_i\,,\,
i\in\{1,2\}\,,\,
k\in \{0,\ldots, 2^{N}\}\,,\,
x,x+h\in \Lambda^{(N)}\}
\end{equ}
denote the set of all (positively oriented) axis-parallel line segments with endpoints in $\Lambda^{(N)}$.
For $\ell=(x,h)=(x,k2^{-N}e_i)$, we denote by
$|\ell|\eqdef |h|$ the length of $\ell$ and we call $e_i$ the direction of $\ell$.

Let $\obonds^{(N)}\subset \bonds^{(N)}$ denote the subset of bonds of the form $(x,x+2^{-N}e_i)$ for some $i\in\{1,2\}$.
Every $\ell=(x,k2^{-N}e_i)\in\mcX^{(N)}$ defines a subset of bonds $\{b_j\}_{1\leq j \leq |\ell|2^{N}} \subset\obonds^{(N)}$
by
$b_j = (x+(j-1)2^{-N}e_i,x+j2^{-N}e_i)$ and this subset determines $\ell$ uniquely.
In addition, $\ell=(x,h)\in\mcX^{(N)}$ can be identified with the
one-dimensional subset $\{x+th\,:\,t\in[0,1]\}\subset [0,1]^2$.
We will often identify $\ell=(x,k2^{-N}e_i)=(x,h)$ either with the set of bonds $\{b_j\}_{1\leq j \leq |\ell|2^{N}}$ or with the set $\{x+th\,:\,t\in[0,1]\}$, which will be clear from the context.

We say that $\ell,\bar\ell\in\mcX^{(N)}$ are parallel
and write $\ell\parallel\bar\ell$ if they have the same direction $e_i$ for some $i\in \{1,2\}$ and if
$\pi_i(\ell)=\pi_i(\bar\ell)$ where $\pi_i \colon [0,1]^2 \to [0,1]$
is the canonical projection onto the $i$-th coordinate,
see Figure~\ref{fig:parallel_rho}.
Note that $\pi_i(\ell)=\pi_i(\bar\ell)$ is equivalent to
$d(x,\bar\ell)=d(y,\ell) = d(\ell,\bar\ell)$ for all $x\in\ell, y\in\bar \ell$,
where for $z\in [0,1]^2$ and $A,B\subset [0,1]^2$,
$d(z,A) = \inf_{a\in A} |z-a|$ and $d(A,B) = \inf_{a\in A} d(a,B)$.

For parallel $\ell,\bar\ell\in\mcX^{(N)}$, we define
\begin{equ}
\rho(\ell,\bar\ell) = |\ell|^{1/2}d(\ell,\bar\ell)^{1/2}\;.
\end{equ}
Note that $\rho(\ell,\bar\ell)^2$ is the area of the rectangle with two of its sides $\ell,\bar\ell$, see Figure~\ref{fig:parallel_rho}.
\begin{figure}
\centering
\begin{tikzpicture}[scale = 0.75]

\draw[very thick] (0,0)--++(0,5)--++(5,0)--++(0,-5)--++(-5,0);

\draw[fill=gray!30,draw opacity=0]  (1,1)--++(0,3) -- ++(1,0) -- ++(0,-3);

\draw[thick, -{Latex[length=1.5mm]}] (1,1)--++(0,3)
node[draw=none, midway, left=0.2]{$\ell$}
node[draw=none, pos=0, below]{\footnotesize $x$}
node[draw=none, pos=1, above]{\footnotesize $x+h$};
%

\draw[thick, -{Latex[length=1.5mm]}] (2,1)--++(0,3)
node[draw=none, midway, right=0.2]{$\bar\ell$};

\draw[thick, -{Latex[length=1.5mm]}] (3.3,0.4)--++(0,3)
node[draw=none, midway, left=0.2]{$\hat\ell$};

\draw[thick, -{Latex[length=1.5mm]}] (4.3,1.5)--++(0,2)
node[draw=none, midway, left=0.2]{$\check\ell$};
\end{tikzpicture}
\caption{Lines $\ell=(x,h),\bar\ell$ are parallel,
but $\hat\ell$ and $\check\ell$ are not parallel to $\ell$. The shaded region has area $\rho(\ell,\bar\ell)^2$.
}\label{fig:parallel_rho}
\end{figure}
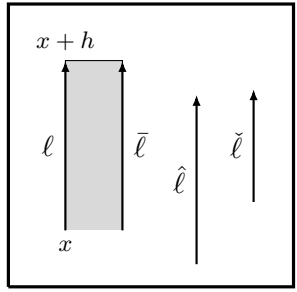

We denote by $\Omega^{(N)}$ the vector space of functions $A\colon\obonds^{(N)}\to \R$.
We canonically extend the definition of $A\in\Omega^{(N)}$ to a function $A\colon\mcX^{(N)}\to \R$ by $A(\ell) = \sum_{b\in\ell}A(\ell)$ where we recall that $\ell$ can be identified with a subset of $\obonds^{(N)}$
(understood as $A(\ell)=0$ if $|\ell|=0$).

For $\alpha\in [0,1]$,
we define on $\Omega_{N}$ the norm
\begin{equ}
|A|_{\gr\alpha} = \sup_{\substack{\ell \in \mcX^{(N)}\\|\ell|>0}} \frac{|A(\ell)|}{|\ell|^\alpha}
\end{equ}
and
the semi-norm
\begin{equ}
|A|_{\rnorm\alpha} = \sup_{\ell\parallel\bar\ell} \frac{|A(\ell)-A(\bar\ell)|}{\rho(\ell,\bar\ell)^{\alpha}}\;,
\end{equ}
where the supremum is taken over all distinct parallel $\ell,\bar\ell\in\mcX^{(N)}$.
We then denote by $\Omega^{(N)}_\alpha$ the space $\Omega^{(N)}$
equipped with the norm
\begin{equ}
|A|_{\alpha} \eqdef |A|_{\gr\alpha} + |A|_{\rnorm\alpha}\;.
\end{equ}

\begin{remark}
While the definition of $|\cdot|_{\rnorm\alpha}$ only involves parallel $\ell,\bar\ell$, we in fact have for all $\ell,\bar\ell\in\mcX^{(N)}$
\begin{equ}
|A(\ell)-A(\bar\ell)|  \lesssim_\alpha |A|_{\alpha} d_{\Haus}(\ell,\bar\ell)^{\alpha/2}\;,
\end{equ}
where $d_\Haus(\ell,\bar\ell)$ is the Hausdorff distance between $\ell,\bar\ell$ treated as subsets of $[0,1]^2$ (see~\cite[Prop.~3.9]{Chevyrev19YM}).
\end{remark}

\section{Gauge fixing}
\label{sec:gauge_fix}

In this section we adapt and improve several 
results of~\cite[Sec.~4]{Chevyrev19YM} in the special case that the structure group is Abelian;
our results and proofs, however, are self-contained and can be read without knowledge of~\cite{Chevyrev19YM}.
The main result of this section is Theorem~\ref{thm:gauge_fix}, which gives conditions under which a gauge field $g\in\mcG=\mcG^{(N)}$ can be gauge transformed to an element $g^u \in\mcG$ such that $|\log g^u|_\alpha$ is small.
For the remainder of this section we fix $N \geq 1$ and suppress it from our notation.

We use the following gauge-invariant function to measure the non-flatness of a gauge field $g$.

\begin{definition}\label{def:[g]}
For $g\in\mcG$ and $\alpha\geq0$ define
\begin{equ}\label{eq:[g]_def}
[g]_\alpha
=
\sup_{r\in\mcR} |r|^{-\alpha/2}
\Big|
\sum_{p\in\mathbf{P}\cap r} \log g(\partial p)
\Big|\;.
\end{equ}
\end{definition}
Recall the space of gauge transformations $\mfG$ from Definition~\ref{def:gauge_transforms}.
\begin{theorem}\label{thm:gauge_fix}
There exists $K>0$ with the following property.
Suppose $g\in\mcG$, $\alpha>0$, and $m \leq N$ are such that $2^m > (\frac{8}{\pi}[g]_\alpha)^{2/\alpha}\vee 8$.
Then there exists $u\in\mfG$
such that
for all $\beta\in[0,1]$ and $\kappa>0$
\begin{equ}
|\log g^u|_{\beta}
\leq
K (2^m + (1-2^{\kappa})^{-1}2^{-m\kappa}[g]_{\beta+\kappa})
\;.
\end{equ}
\end{theorem}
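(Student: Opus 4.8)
The plan is to construct the gauge transformation $u$ by a multiscale/telescoping procedure: starting from the coarsest scale $2^m$ and refining down to the lattice scale $2^{-N}$, at each dyadic scale we gauge-transform on each coarse block so that the gauge field restricted to the `skeleton' of that scale (say, the bonds lying on lines of the coarser grid) is rendered trivial, or as close to trivial as the non-flatness $[g]_\alpha$ allows. Concretely, I would first pass to the axial gauge with respect to a maximal tree adapted to the scale-$2^m$ grid, so that $g$ is trivial along a spanning tree of the coarse lattice $\Lambda^{(m)}$; then the value of $\log g$ on any coarse bond is determined, up to controlled error, by a sum of plaquette log-holonomies over a scale-$2^m$ rectangle, which by Definition~\ref{def:[g]} is bounded by $2^{m\alpha/2}[g]_\alpha$. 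The hypothesis $2^m > (\tfrac 8\pi [g]_\alpha)^{2/\alpha}$ is exactly what guarantees that these coarse log-holonomies stay inside $(-\pi,\pi)$ so that $\log$ behaves additively and no wrapping occurs; this is where that quantitative assumption gets used.

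Next I would iterate this down the dyadic scales. Having fixed the gauge on the scale-$2^{-k}$ grid, on each scale-$2^{-k}$ block I apply a further gauge transformation (supported inside that block, hence not disturbing what was already achieved on the coarser skeleton) to trivialise $g$ along the scale-$2^{-(k+1)}$ grid lines interior to the block, again up to an error governed by a sum of plaquette holonomies over a scale-$2^{-k}$ rectangle, bounded by $2^{-k\alpha/2}[g]_\alpha$ — but now, crucially, we measure with the exponent $\beta+\kappa$ rather than $\beta$, picking up a factor $2^{-k(\beta+\kappa)/2}$ per scale, which is what makes the geometric series in $k$ converge and produces the $(1-2^\kappa)^{-1}2^{-m\kappa}[g]_{\beta+\kappa}$ term. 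After all scales are processed, $\log g^u$ on any bond is a telescoping sum over scales $m \le k \le N$ of these block corrections, each of size $\lesssim 2^{-k(\beta+\kappa)/2}[g]_{\beta+\kappa}$ for $k>m$, plus the initial coarse contribution of size $\lesssim 2^{m}$ coming from the fact that within a single scale-$2^m$ block we have no better than trivial control (this is the source of the $2^m$ term — one cannot do better than $O(2^m)$ for the `bulk' of the coarsest block).

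Finally I would estimate the two pieces of the norm $|\log g^u|_\beta = |\log g^u|_{\gr\beta} + |\log g^u|_{\rnorm\beta}$ from Section~\ref{sec:1-forms}. For $|\log g^u|_{\gr\beta}$: given an axis-parallel segment $\ell$, decompose it into maximal dyadic pieces at scales between $2^m$ and $2^{-N}$; on pieces at scale $2^{-k}$ with $k>m$ the value of $\log g^u$ is controlled by the scale-$k$ bound, and summing $|\ell|^{-\beta}\sum_k (\text{number of scale-}k\text{ pieces})\cdot 2^{-k(\beta+\kappa)/2}[g]_{\beta+\kappa}$ against the worst case $|\ell|\le 2^{-m}$-type pieces yields the stated bound, with the $2^m$ term dominating the coarse scale contribution and the geometric sum $\sum_{k>m}2^{-k\kappa}\sim (1-2^{-\kappa})^{-1}2^{-m\kappa}$ (I would absorb the precise form of the constant $(1-2^\kappa)^{-1}$ as written into $K$). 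For $|\log g^u|_{\rnorm\beta}$: given parallel segments $\ell\parallel\bar\ell$, the difference $\log g^u(\ell)-\log g^u(\bar\ell)$ equals a sum of plaquette holonomies over the rectangle they span (by Stokes/telescoping, since $g^u$ is trivial along the transverse skeleton bonds up to controlled error), whose size is $\lesssim |r|^{\beta/2 + \text{stuff}}$, and $\rho(\ell,\bar\ell)^2 = |r|$, giving the Hölder-in-separation bound; the multiscale error terms again contribute a convergent geometric series. The main obstacle I anticipate is the bookkeeping in the multiscale construction: ensuring the block gauge transformations at successive scales are genuinely supported so as not to destroy coarser normalisations, and tracking how a general (non-dyadic-aligned) segment or rectangle interacts with the dyadic block structure — this combinatorial geometry, rather than any single estimate, is where the real work lies. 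This is essentially a cleaner, Abelian version of the argument in~\cite[Sec.~4]{Chevyrev19YM}, and I would lean on that structure while simplifying using commutativity (which lets all the plaquette sums be genuine $\R$-valued sums rather than ordered products).
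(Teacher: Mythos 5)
Your overall architecture matches the paper's: an axial gauge at the coarse scale $2^{-m}$ (Lemma~\ref{lem:axial}) to make the coarse bonds small, a scale-by-scale refinement down to $2^{-N}$ whose errors are controlled by $[g]_{\beta+\kappa}$ and sum to the geometric series $(1-2^{\kappa})^{-1}2^{-m\kappa}[g]_{\beta+\kappa}$, and finally discrete Stokes plus the $\gr\beta$ bound to control $|\cdot|_{\rnorm\beta}$. The genuine gap is in the refinement step. You propose to ``trivialise $g$ along the scale-$2^{-(k+1)}$ grid lines interior to the block'' by a block-supported gauge transformation. This is not achievable and, more importantly, any greedy approximation of it destroys the estimate: when passing from $\Lambda^{(n-1)}$ to $\Lambda^{(n)}$ the only new interior degree of freedom per coarse plaquette-centre $x$ is the single value $u(x)\in U(1)$, while there are four new bonds incident to $x$; moreover the midpoints of coarse bonds are shared between neighbouring blocks, so the transformation there cannot be block-supported. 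If one makes an axial-type choice at $x$ (set one incident bond to $1$ and let the other three absorb the curvature), the new bonds satisfy only $|\alpha_i|\lesssim 3\big(\max_b|\log g^u_b| + [g]_\alpha 2^{-n\alpha}\big)$, and a constant $>1$ in front of the previous scale's bond bound makes the bond-wise estimates grow geometrically in $n-m$; then neither the condition keeping all logarithms in $(-\pi,\pi)$ nor the induction for $|\cdot|_{\gr\beta}$ closes.

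The one idea your plan is missing is the balanced (Landau/Coulomb) choice that replaces ``trivialise'': split each coarse bond's log-holonomy equally between its two halves, and at each new centre $x$ impose $\sum_{i=1}^4\alpha_i=0$, realised explicitly by~\eqref{eq:alpha_in_Landau}. By~\eqref{eq:alpha_def_2} this gives $|\alpha_i|\le \frac12\max_b|\log g^u_b| + [g]_\alpha 2^{-n\alpha}$ --- the factor $\frac12$ is what keeps the bond bounds stable across scales (Lemma~\ref{lem:Landau}\ref{pt:bonds_bound}) --- and it yields the averaging identities $A(\ell)=\frac12(A(\bar\ell)+A(\underline\ell))$ and $A(\ell)=\frac12(A(\ell_1)+A(\ell_2))+\Delta$ with $|\Delta|\le 4[g]_{\beta+\kappa}|\ell|^{\beta}2^{-n\kappa}$, which by concavity of $x\mapsto x^{\beta}$ produce the additive recursion $P_n=P_{n-1}+4[g]_{\beta+\kappa}2^{-n\kappa}$ behind~\eqref{eq:A_bar_alpha}. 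A smaller omission: $\log g^u(\ell)-\log g^u(\bar\ell)$ is not just the plaquette sum over the spanned rectangle --- the two transverse sides of $\partial r$ contribute as well, and the paper controls them with the already-established $|\cdot|_{\gr\beta}$ bound together with a case analysis on $d(\ell,\bar\ell)$ versus $|\ell|$.
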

We give the proof of Theorem~\ref{thm:gauge_fix}
at the end of this section.
%
%
\begin{remark}\label{rem:trivial_bounds}
In addition to the bounds in Theorem~\ref{thm:gauge_fix}, we have, for all $g\in\mcG$ and $\beta\in[0,1]$, the trivial bound 
\begin{equ}
|\log g|_{\beta}
\leq
2\pi 2^{N(1+\beta/2)}\;.
\end{equ}
Indeed,
$|(\log g)(\ell)| \leq \pi 2^N|\ell|\leq \pi2^N|\ell|^{\beta}$ for all $\ell\in\mcX^{(N)}$.
Likewise, for distinct and parallel $\ell,\bar\ell\in\mcX^{(N)}$,
$\rho(\ell,\bar\ell)\geq 2^{-N/2}|\ell|^{1/2}$, 
and thus
\begin{equ}
|(\log g)(\ell)-(\log g)(\bar\ell)| \leq 2\pi 2^{N}|\ell| \leq 2\pi\rho(\ell,\bar\ell)^{\beta}2^{N(1+\beta/2)}\;.
\end{equ}
\end{remark}

\subsection{Landau gauge}
\label{subsec:Landau}
We first construct a discrete version of the 
Landau (i.e. Coulomb) gauge applied at dyadic scales.
Throughout this subsection, let us fix $1\leq m \leq N$ and $g\in\mcG$.
We define a gauge transformation $u\in\mfG$ inductively as follows.
Starting at scale $m$, we set $u(x) = 1$ for all $x\in\Lambda^{(m)}$.
Suppose now that we have defined $u$ on $\Lambda^{(n-1)}$ for $m < n \leq N$.
To define $u$ on $\Lambda^{(n)}$, consider the sub-lattice $\Lambda_1^{(n-1)} \subset \Lambda^{(n)}$ which consists of all points in $\Lambda^{(n-1)}$ together with the midpoints of bonds of $\Lambda^{(n-1)}$, see Figure~\ref{fig:plaquettes}.
We let $\bonds_1^{(n-1)}\subset\bonds^{(n)}$ denote the bonds of $\Lambda_1^{(n-1)}$.
For $x\in\Lambda^{(n-1)}_1\setminus\Lambda^{(n-1)}$ which is the midpoint of $(a,b)\in\bonds^{(n-1)}$, we set $u(x)$ as the unique element of $U(1)$ such that
\begin{equ}
\log g_{ax}^u = \log g_{xb}^u = \frac12\log g^u_{ab}\;.
\end{equ}
We then extend the definition of $u$ to the remaining points in $\Lambda^{(n)}$ as follows.
Consider $x\in \Lambda^{(n)}\setminus\Lambda_1^{(n-1)}$
and let $p_1,\ldots, p_4\in\mathbf{P}^{(n)}$ be the four plaquettes touching $x$ oriented anti-clockwise with $p_1$ at the top-right corner, see Figure~\ref{fig:plaquettes}.
For $i=1,\ldots, 4$ consider
\begin{equ}
\beta_i = \log g(\partial p_i) - \log g^u(b_{2i-1}) - \log g^u(b_{2i})\;,
\end{equ}
where $b_1,\ldots, b_8\in \bonds_{1}^{(n-1)}$ are the eight bonds encircling $x$ as in Figure~\ref{fig:plaquettes}.
Observe that
\begin{equ}\label{eq:beta_sum_zero}
\sum_{i=1}^4 \beta_i = 0 \mod 2\pi\;.
\end{equ}
We say that $g^u$ is \emph{small around $x$} if $\sum_{i=1}^4 \beta_i = 0$.
Observe that there is a bijection between choices for $u(x)\in U(1)$
and $\alpha_1,\ldots,\alpha_4\in \R \mod 2\pi$
such that
\begin{equ}\label{eq:alpha_condition}
\alpha_i-\alpha_{i+1}
= \beta_i \mod 2\pi \;,\quad i=1,\ldots, 4
\end{equ}
(indices are mod $4$ so that $\alpha_5=\alpha_1$),
which is given by the relation
\begin{equ}\label{eq:def_u_x}
e^{\mbi\alpha_i} = g^u_{xy_i} = u(x) g_{xy_i}u(y_i)^{-1}\;,
\end{equ}
where $y_1,\ldots y_4\in\Lambda_1^{(n-1)}$ as in Figure~\ref{fig:plaquettes}.

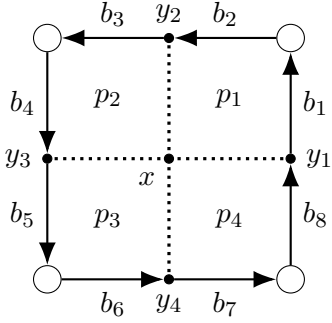
\begin{figure}[t]
\centering
\begin{tikzpicture}[scale = 1.6]
\foreach \x in {0,2}{
  \foreach \y in {0,2}{
    \node [draw, circle, name=c\x\y] at (\x,\y) {};
  }
}

\node [draw, dot, name=d10,label=below:$y_4$] at (1,0) {};
\node [draw, dot, name=d12,label=above:$y_2$] at (1,2) {};
\node [draw, dot, name=d01,label=left:$y_3$] at (0,1) {};
\node [draw, dot, name=d21,label=right:$y_1$] at (2,1) {};

\draw[thick, -{Latex[length=3mm]}] (c00) -- (d10) node[draw=none, midway, below=0.5]{$b_6$};
\draw[thick, -{Latex[length=3mm]}] (d10) -- (c20)
node[draw=none, midway, below=0.5]{$b_7$};
\draw[thick, -{Latex[length=3mm]}] (c20) -- (d21)
node[draw=none, midway, right=0.5]{$b_8$};
\draw[thick, -{Latex[length=3mm]}] (d21) -- (c22)
node[draw=none, midway, right=0.5]{$b_1$};
\draw[thick, -{Latex[length=3mm]}] (c22) -- (d12)
node[draw=none, midway, above=0.5]{$b_2$};
\draw[thick, -{Latex[length=3mm]}] (d12) -- (c02)
node[draw=none, midway, above=0.5]{$b_3$};
\draw[thick, -{Latex[length=3mm]}] (c02) -- (d01)
node[draw=none, midway, left=0.5]{$b_4$};
\draw[thick, -{Latex[length=3mm]}] (d01) -- (c00)
node[draw=none, midway, left=0.5]{$b_5$};

\draw[very thick, dotted] (1,1)--++(0,1);
\draw[very thick, dotted] (1,1)--++(0,-1);
\draw[very thick, dotted] (1,1)--++(1,0);
\draw[very thick, dotted] (1,1)--++(-1,0);
\node (p1) at (1.5,1.5) {$p_1$};
\node (p2) at (0.5,1.5) {$p_2$};
\node (p3) at (0.5,0.5) {$p_3$};
\node (p4) at (1.5,0.5) {$p_4$};

\node [draw,dot,name=x,label={-135:$x$}] at (1,1) {};

\end{tikzpicture}
\caption{
Large circles are points of $\Lambda^{(n-1)}$, small dots are points of $\Lambda^{(n)}$.
The points $y_1,\ldots, y_4$ along with the large circles are in $\Lambda^{(n-1)}_1$ and 
the point $x$ is in $\Lambda^{(n)}\setminus\Lambda^{(n-1)}_1$.
Arrows labelled by $b_1,\ldots, b_8$ are bonds in $\bonds^{(n-1)}_1$
and dotted lines represent bonds in $\bonds^{(n)}\setminus\bonds^{(n-1)}_1$.
}
\label{fig:plaquettes}
\end{figure}

If $g^u$ is small around $x$, observe that
\begin{equ}\label{eq:alpha_in_Landau}
\alpha_i \eqdef \frac{3}{8}(\beta_i - \beta_{i-1}) + \frac{1}{8}(\beta_{i+1}-\beta_{i+2})
\;,\quad i=1,\ldots, 4
\end{equ}
(indices are again mod $4$ so that
$\beta_0=\beta_4$, $\beta_5=\beta_1$, and $\beta_6=\beta_2$)
is a solution to~\eqref{eq:alpha_condition} (without mod $2\pi$ even),
and we define in this case $u(x)$ as the unique element
of $U(1)$ which determined by~\eqref{eq:def_u_x}-\eqref{eq:alpha_in_Landau}.
Note also that in this case, since $\log g^u(b_{j}) = \log g^u(b_{j+1})$
for all even $j=2,\ldots, 8$
by definition of $u$ on $\Lambda_1^{(n-1)}$ (with $b_9=b_1$), we have
\begin{equation}\label{eq:alpha_def_2}
\begin{split}
\alpha_i = \frac{\log g^u(b_{2i-3})-\log g^u(b_{2i})}{2}
&+
\frac{3}8
\big(
\log g(\partial p_i)-\log g(\partial p_{i-1})\big)
\\
&+ \frac{1}8
\big(
\log g(\partial p_{i+1}) - \log g(\partial p_{i+2})
\big)
\;.
\end{split}
\end{equation}
If $g^u$ is not small around $x$, then we simply define $u(x)=1$ (but only the case that $g^u$ is small around $x$ will ultimately be relevant to us).
The above procedure uniquely defines $u\in\mfG$ by induction.

\begin{remark}
As expected from gauge-invariance, the system of equations~\eqref{eq:alpha_condition}
has many solutions.
We break this gauge-invariance by the  auxiliary condition $\sum_{i=1}^4 \alpha_i = 0$
which~\eqref{eq:alpha_in_Landau} satisfies.
This is the lattice analogue of the Landau (Coulomb) gauge and corresponds to minimising the (squared) $L^2$-norm $\sum_{i=1}^4\alpha_i^2$.
\end{remark}

\begin{lemma}[Landau-type gauge]
\label{lem:Landau}
Suppose that for some $\alpha\geq 0$
\begin{equ}\label{eq:simple_bound}
[g]_\alpha 2^{-(m+1) \alpha/2}<\pi\;,
\end{equ}
and for some $c\in [0,\frac\pi6]$
\begin{equ}\label{eq:dyadic_bonds_bound}
\max
\Big\{
[g]_\alpha 2^{-(m+1)\alpha} ,
\max_{b\in\bonds^{(m)}} |\log g_b|/2
\Big\}
< c\;.
\end{equ}
Then
\begin{enumerate}[label=(\alph*)]
\item\label{pt:bonds_bound} 
\eqref{eq:dyadic_bonds_bound} holds with $m$ replaced by any $n\in\{m,\ldots, N\}$ and $g$ replaced by $g^u$,

\item\label{pt:gr_bound} for all $\beta\in[0,1]$ and $\kappa>0$
\begin{equ}\label{eq:A_bar_alpha}
|(\log g^u)|_{\gr\beta} \leq
c 2^{m+1} + 4[g]_{\beta+\kappa}2^{-(m+1)\kappa}(1-2^{\kappa})^{-1}\;.
\end{equ}
\end{enumerate}
\end{lemma}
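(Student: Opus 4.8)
\emph{Strategy.} The plan is to prove both parts simultaneously by induction over the dyadic scales $n=m,\dots,N$, following the inductive definition of $u$ and showing that the smallness hypotheses~\eqref{eq:simple_bound} and~\eqref{eq:dyadic_bonds_bound} are reproduced at every scale. The one recurring subtlety is that, at scales $\ge m+1$, the holonomy $g(\partial r)$ of $g$ around a plaquette (resp. around a thin rectangle) does not wind, so that $\log g(\partial r)$ coincides with the honest sum $\sum_{p\in\mcP\cap r}\log g(\partial p)$; this is precisely where~\eqref{eq:dyadic_bonds_bound} (resp.~\eqref{eq:simple_bound}, via its weaker power $\alpha/2$, applied to thin slabs of area $\le 2^{-n}$) is used, and once it is available the size of such sums is controlled by $[g]_\alpha$ and $[g]_{\beta+\kappa}$ through Definition~\ref{def:[g]}.

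\emph{Part~\ref{pt:bonds_bound}.} Since $[\,\cdot\,]_\alpha$ only sees the gauge-invariant quantities $g(\partial p)$, one has $[g^u]_\alpha=[g]_\alpha$, hence $[g^u]_\alpha 2^{-(n+1)\alpha}\le[g]_\alpha 2^{-(m+1)\alpha}<c$ for all $n\ge m$ because $\alpha\ge0$. For the bond bound I would induct on $n$, the base case $n=m$ being~\eqref{eq:dyadic_bonds_bound}. Passing from $n-1$ to $n$, the bonds of $\bonds^{(n)}$ come in two kinds. If $b\in\bonds_1^{(n-1)}$ is obtained by bisecting $(a,a')\in\bonds^{(n-1)}$, then $\log g^u(b)=\tfrac12\log g^u_{aa'}$ is well-defined (as $|\log g^u_{aa'}|<2c<\pi$) and $|\log g^u(b)|/2<\tfrac12 c$. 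If $b=(x,y_i)$ is a spoke bond at a face-centre $x$, one first checks that $g^u$ is small around $x$: estimating $|\beta_i|\le|\log g(\partial p_i)|+|\log g^u(b_{2i-1})|+|\log g^u(b_{2i})|<3c\le\pi/2$, where the no-winding fact gives $|\log g(\partial p_i)|\le[g]_\alpha 2^{-n\alpha}<c$ and the inductive hypothesis bounds the half-bond values, one gets $|\sum_{i=1}^4\beta_i|<2\pi$, so~\eqref{eq:beta_sum_zero} upgrades to $\sum_{i=1}^4\beta_i=0$. Then $u(x)$ is defined by~\eqref{eq:def_u_x}--\eqref{eq:alpha_in_Landau}, and the explicit expression~\eqref{eq:alpha_def_2} together with the same two bounds yields $|\log g^u(x,y_i)|=|\alpha_i|<2c$, i.e. $|\log g^u(x,y_i)|/2<c$, closing the induction.

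\emph{Part~\ref{pt:gr_bound}.} Fix $\ell\in\mcX^{(N)}$, without loss of generality horizontal, of length $|\ell|\le1$. I would decompose $\ell$ into scale-adapted pieces: a ``bulk'' that is a concatenation of at most $\lceil|\ell|2^m\rceil$ edges of $\Lambda^{(m)}$, together with, for each $n\in\{m+1,\dots,N\}$, at most two ``fringe'' edges of $\Lambda^{(n)}$ near the two ends of $\ell$. Iterating the bisection rule shows that $\log g^u$ summed over (the finest refinement of) a scale-$m$ edge equals the corresponding scale-$m$ bond value, of modulus $<2c$ by~\eqref{eq:dyadic_bonds_bound}; summing over the bulk and dividing by $|\ell|^\beta\le1$ (using $\beta\le1$) gives the term $c\,2^{m+1}$. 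For the fringes, the new bond values produced at scale $n$ are the $\alpha_i$'s: inserting~\eqref{eq:alpha_def_2}, the half-bond differences $\tfrac12(\log g^u(b_{2i-3})-\log g^u(b_{2i}))$ telescope along $\ell$ and contribute only $O(c)$ per scale, which is absorbed into the bulk estimate, while each holonomy difference $\tfrac38(\log g(\partial p_i)-\log g(\partial p_{i-1}))+\tfrac18(\log g(\partial p_{i+1})-\log g(\partial p_{i+2}))$ is bounded, using no-winding and Definition~\ref{def:[g]} with exponent $\beta+\kappa$, by $[g]_{\beta+\kappa}2^{-n(\beta+\kappa)}$. Summing over the $O(|\ell|2^n)$ spoke bonds of scale $n$ met by $\ell$ and over $n\ge m+1$ produces a geometric series in $2^{-n\kappa}$ whose sum, after division by $|\ell|^\beta$, is the term $4[g]_{\beta+\kappa}2^{-(m+1)\kappa}(1-2^\kappa)^{-1}$ of~\eqref{eq:A_bar_alpha}.

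\emph{Main obstacle.} The delicate point is the bookkeeping in Part~\ref{pt:gr_bound}: one must set up the multiscale decomposition of an arbitrary $\ell\in\mcX^{(N)}$ (including the case where $\ell$ lies at a fine transverse level) so that every scale-$N$ bond is counted exactly once and the telescoping of the half-bond terms in~\eqref{eq:alpha_def_2} along $\ell$ is carried out cleanly, and one must track the no-winding conditions uniformly in the scale so that every $\log g(\partial r)$ appearing may be replaced by the unwound plaquette sum controlled by $[g]_\alpha$ or $[g]_{\beta+\kappa}$. Part~\ref{pt:bonds_bound}, by contrast, should be routine once the inductive scheme is in place.
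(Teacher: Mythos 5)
Your Part~\ref{pt:bonds_bound} is correct and is essentially the paper's own argument: the same chain $|\log g(\partial p_i)|<c$ (via the no-winding consequence of~\eqref{eq:simple_bound}), then $|\beta_i|<3c\le\pi/2$ forcing $\sum_{i=1}^4\beta_i=0$, then $|\alpha_i|<2c$ from~\eqref{eq:alpha_def_2}, carried through an induction on the scale $n$.

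Part~\ref{pt:gr_bound} is where there is a genuine gap, and it is exactly the point you flag as the ``main obstacle'' without resolving it. Your longitudinal decomposition into a scale-$m$ bulk plus fringe edges only makes sense for a line $\ell$ whose transverse coordinate lies in $2^{-m}\Z$; for such lines every bond value is obtained by repeated bisection, so $|A(\ell)|\le 2c\,2^m|\ell|$ is immediate and no fringe analysis is needed. The hard case is a line at a fine transverse level, say at height an odd multiple of $2^{-n}$ with $n>m$: there \emph{all} scale-$n$ bonds of $\ell$ are spoke bonds, and your key claim --- that the half-bond terms $\tfrac12(\log g^u(b_{2i-3})-\log g^u(b_{2i}))$ in~\eqref{eq:alpha_def_2} ``telescope along $\ell$ and contribute only $O(c)$ per scale'' --- is false. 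Summing the spoke-bond values over the face-centres along $\ell$, these half-bond terms reassemble (via the bisection identities $\log g^u(b_2)=\log g^u(b_3)$, $\log g^u(b_6)=\log g^u(b_7)$, etc.) into $\tfrac12\big(A(\ell_1)+A(\ell_2)\big)$ for the two parallel lines $\ell_1,\ell_2\subset\bonds_1^{(n-1)}$ at distance $2^{-n}$ from $\ell$. This is not $O(c)$; it must itself be estimated, and since $\ell_1,\ell_2$ may again sit at level $n-1$ rather than level $m$, one is forced into an induction on the transverse dyadic level. That induction is the actual content of the paper's proof: one shows $|A(\ell)|\le P_n|\ell|^\beta$ for all $\ell\in\mcX^{(n)}$, with $P_m=c2^{m+1}$, with a concavity argument handling the bisected lines, with a boundary correction of size $2[g]_{\beta+\kappa}2^{-n(\beta+\kappa)}$ when the endpoints of $\ell$ are not aligned with $\Lambda^{(n-1)}$, and with the recursion $P_n=P_{n-1}+4[g]_{\beta+\kappa}2^{-n\kappa}$. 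The geometric series in~\eqref{eq:A_bar_alpha} is the sum of these increments over transverse levels, not a sum over longitudinal fringe scales as in your sketch. As written, your argument for Part~\ref{pt:gr_bound} does not close.
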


\begin{proof}
Observe that~\eqref{eq:simple_bound} implies
$\log g(\partial r) = \sum_{p\in\mathbf{P}\cap r} \log g(\partial p)$
for any $r\in\mcR$ with $|r|\leq 2^{-(m+1)}$.
Hence, for every $\gamma\geq 0$, $m<n\leq N$, and $r\in\mcT^{(n)}$
\begin{equ}\label{eq:plaq_r_bound}
\Big|\sum_{p\in\mathbf{P}^{(n)}\cap r} \log g(\partial p)\Big|
\leq
[g]_\gamma|r|^{\gamma/2}\;,
\end{equ}
where we canonically treat $\mathbf{P}^{(n)}$ as a subset of $\mcR$ to define $g(\partial p)$ and treat $r$ as a rectangle in $\mcR^{(n)}$ to make sense of $\mathbf{P}^{(n)}\cap r$.
We proceed to prove~\ref{pt:bonds_bound} together with the fact that
\begin{equ}\label{eq:small_claim}
\text{$g^u$ is small around $x$ for all $x \in \Lambda^{(n)}\setminus \Lambda_1^{(n-1)}$ and $m < n\leq N$.}
\end{equ}
Consider first any $x\in \Lambda^{(m+1)}\setminus\Lambda_1^{(m)}$.
Using the notation from above with $n=m+1$,
note that~\eqref{eq:plaq_r_bound} with $\gamma=\alpha$ and $r\in\{p_1,\ldots, p_4\}$ and~\eqref{eq:dyadic_bonds_bound} imply $|\log g(\partial p_i)| < c$.
Since $c\leq \frac\pi6$, it follows that $|\beta_i| < \frac\pi2$, and thus $\sum_{i=1}^4\beta_i=0$ due to~\eqref{eq:beta_sum_zero}.
Hence $g^u$ is small around $x$.
Using again~\eqref{eq:plaq_r_bound} and the expression~\eqref{eq:alpha_def_2}
we obtain
\begin{equ}
|\alpha_i| \leq [g]_\alpha 2^{-(m+1) \alpha} + \frac{1}{2}\max_{b\in\bonds^{(m)}}|\log g_b|\;.
\end{equ}
It follows from~\eqref{eq:dyadic_bonds_bound}
that $|\alpha_i| < 2c$, and thus~\eqref{eq:dyadic_bonds_bound}
holds with $m$ replaced by $m+1$ and $g_b$ replaced by $g^u_b$.
By induction,~\eqref{eq:dyadic_bonds_bound} holds with $m$ replaced by any $m \leq n \leq N$ and $g_b$ replaced by $g^u_b$, 
which simultaneously proves~\ref{pt:bonds_bound} and~\eqref{eq:small_claim}.

We now prove~\ref{pt:gr_bound}
by induction on $m \leq n \leq N$.
Let us denote $A=\log g^u\in\Omega^{(N)}$.
For the base case,
note that for all $\ell\in\mcX^{(m)}$, $|A(\ell)| \leq 2c |\ell|2^{m} \leq c2^{m+1} |\ell|^{\beta}$.
Suppose now that $m<n\leq N$ and $|A(\ell)| \leq P_{n-1}|\ell|^{\beta}$ for some $P_{n-1}>0$ and all $\ell\in\mcX^{(n-1)}$.
Consider $\ell\in \mcX^{(n)}$ and suppose first that $\ell\subset \bonds_1^{(n-1)}$.
Consider $\bar\ell$ and $\underline\ell$ as the shortest and longest line in $\mcX^{(n-1)}$
which contains and is contained in $\ell$ respectively.
Note that $A(\ell) = \frac12(A(\bar\ell)+A(\underline{\ell}))$ by the choice of $u$ on $\Lambda_1^{(n-1)}$.
Hence, by the inductive hypothesis and concavity of $x\mapsto x^{\beta}$,
\begin{equs}
|A(\ell)|
= \frac{|A(\bar\ell) + A(\underline{\ell})|}{2}
\leq P_{n-1}\frac{|\bar\ell|^{\beta}+|\underline\ell|^{\beta}}{2}
\leq P_{n-1}\Big(\frac{|\bar\ell|+|\underline\ell|}{2}\Big)^{\beta}
= P_{n-1}|\ell|^{\beta}\;.
\end{equs}
Suppose now that $\ell \subset \bonds^{(n)}\setminus\bonds_1^{(n-1)}$.
Consider the two lines $\ell_1,\ell_2\in\mcX^{(n)}$ for which $\ell_1,\ell_2 \subset \bonds_1^{(n-1)}$ and which are parallel to and at distance $2^{-n}$ from $\ell$.
Then, since $g^u$ is small around all $x\in\Lambda^{(n)}\setminus\Lambda_1^{(n-1)}$,
using the expression~\eqref{eq:alpha_def_2},
we have 
$A(\ell) = \frac12(A(\ell_1)+A(\ell_2)) + \Delta$
where
\begin{equs}
|\Delta|
&\leq \Big| \sum_{p} \log g (\partial p) \Big| + 2[g]_{\beta+\kappa} 2^{-n(\beta+\kappa)}
\\
&\leq 2[g]_{\beta+\kappa} (|\ell|^{(\beta+\kappa)/2}2^{-(\beta+\kappa) n/2} + 2^{-n(\beta+\kappa)})
\\
&\leq 4[g]_{\beta+\kappa}|\ell|^{\beta}2^{-n\kappa}
\end{equs}
where the sum is over all plaquettes $p\in\mathbf{P}^{(n)}$ with two corners touching $\underline \ell$, the largest line contained in $\ell$ whose respective parallel lines $\underline\ell_1$ and $\underline\ell_2$ are in $\mcX^{(n-1)}$,
and where the second inequality follows from~\eqref{eq:plaq_r_bound}.
It follows that $|A(\ell)| \leq P_n|\ell|^{\beta}$ with $P_n = P_{n-1}+4[g]_{\beta+\kappa}2^{-n\kappa}$, from which~\eqref{eq:A_bar_alpha} follows.
\end{proof}

\subsection{Axial gauge}
\label{subsec:axial}
In order to apply the regularising Landau gauge above, 
we need to ensure that we can find $1\leq m \leq N$
for which the conditions of Lemma~\ref{lem:Landau} are satisfied.
As in the previous subsection, we fix $1\leq m \leq N$.

\begin{lemma}[Axial gauge]\label{lem:axial}
Suppose $\alpha, C\geq 0$ and $g\in\mcG^{(m)}$
such that
\begin{equ}\label{eq:hol_r_bound}
\sup_{r\in\mcT^{(m)}} |r|^{-\alpha/2}|\log g(\partial r)| \leq C\;.
\end{equ}
Then there exists $u\in\mfG^{(m)}$ such that
\begin{equ}\label{eq:axial_bound}
\max_{b\in\bonds^{(m)}} |\log g^u_b| \leq C 2^{-\alpha m/2}\;.
\end{equ}
\end{lemma}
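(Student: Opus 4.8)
The plan is to construct $u$ explicitly by working outward from the lower-left corner along the maximal tree $\mfT$ of Figure~\ref{fig:max_tree_gammas}, mimicking the standard passage to the axial gauge but keeping track of sizes. First I would fix $u(0) = 1$ and then define $u$ at every other node $x \in \Lambda^{(m)}$ by the requirement that $g^u$ vanish (i.e. $\log g^u_b = 0$) on all bonds $b$ of the maximal tree $\mfT$; since $\mfT$ is a tree containing all of $\Lambda^{(m)}$, walking along the unique tree-path from $0$ to $x$ determines $u(x)$ uniquely via $u(x) = \prod_{b \in \mathrm{path}(0 \to x)} g_b$ (with the convention that $g^u_b = u(\underline b) g_b u(\overline b)^{-1}$). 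This makes $g^u$ an element of $\mcG_\mfT$, so the only bonds where $\log g^u_b$ can be nonzero are the bonds not in $\mfT$.

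The heart of the estimate is then to bound $|\log g^u_b|$ for $b \notin \mfT$. For the tree depicted in Figure~\ref{fig:max_tree_gammas} (horizontal bonds in row $y$, plus the single vertical column at $x=0$), a bond $b = (z, z + 2^{-m}e_2)$ not in the tree is a vertical bond, and I would express its holonomy $\log g^u_b$ as a telescoping sum of plaquette holonomies $\log g(\partial p)$: closing up $b$ with the tree-paths from its two endpoints back to a common ancestor produces a rectangle $r \in \mcR^{(m)}$, and since $g^u$ is trivial on all tree-bonds, $\log g^u_b = \pm \log g^u(\partial r) = \pm \log g(\partial r)$ (using gauge-invariance of holonomies and the fact that $g^u$ and $g$ have the same plaquette holonomies). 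The key point is that by the structure of this particular tree, the rectangle $r$ can always be taken to be a \emph{thin} rectangle in $\mcT^{(m)}$ — namely of width exactly $2^{-m}$ — so that hypothesis~\eqref{eq:hol_r_bound} applies directly. Then $|\log g^u_b| = |\log g(\partial r)| \le C |r|^{\alpha/2} = C (2^{-m} \cdot |\ell| )^{\alpha/2}$ where $\ell$ is the vertical side; the worst case is when the rectangle spans the full height, $|r| = 2^{-m} \cdot 1 = 2^{-m}$, giving $|\log g^u_b| \le C 2^{-\alpha m / 2}$, which is exactly~\eqref{eq:axial_bound}. For horizontal bonds $b \notin \mfT$ — of which there are none for the tree as drawn, but in general one should be careful — the same telescoping argument with a thin rectangle of height $2^{-m}$ works.

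One subtlety I would be careful about: the passage $\log g^u(\partial r) = \sum_{p \in \mcP^{(m)} \cap r} \log g(\partial p)$ requires the sum of the plaquette angles to lie in $(-\pi,\pi)$ for $\log$ to be additive, but here we do \emph{not} need additivity — we can apply~\eqref{eq:hol_r_bound} directly to $\log g(\partial r)$, which is defined as the principal logarithm of the single $U(1)$-element $g(\partial r)$, and $g^u(\partial r) = g(\partial r)$ holds as group elements without any smallness assumption. So the argument needs no extra hypothesis beyond~\eqref{eq:hol_r_bound}. The main obstacle, and the only place requiring genuine care, is verifying that for the chosen maximal tree every non-tree bond closes up with tree-paths into a thin rectangle of the correct orientation; this is a combinatorial fact about the specific tree in Figure~\ref{fig:max_tree_gammas} (a "comb" tree), and I would state it cleanly — each vertical non-tree bond at horizontal position $x > 0$ closes via the bottom edge of the comb and the two vertical sides into a $2^{-m} \times (\text{something})$ rectangle — rather than appeal to a general tree. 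Everything else is bookkeeping.
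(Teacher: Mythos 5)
Your proposal is correct and follows the paper's proof essentially verbatim: pass to the axial gauge $g^u\in\mcG_\mfT$ for the comb tree, identify each non-tree bond with the boundary of a unique $m$-thin rectangle, and apply~\eqref{eq:hol_r_bound} directly to $\log g(\partial r)$ with $|r|\leq 2^{-m}$ (your remark that no additivity of logarithms is needed is exactly the right observation). The only quibble is that for the tree as drawn the thin dimension of the closing rectangle is vertical (height $2^{-m}$, width $k2^{-m}$) rather than horizontal as you describe, but this transposition does not affect the bound.
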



\begin{proof}
Consider the maximal tree 
$\mfT$ as in Figure~\ref{fig:max_tree_gammas} (with $m$ in place of $N$).
There exists $u\in\mfG^{(m)}$ such that $g^u\in\mcG_\mfT$.
In particular, every bond $b\in\obonds^{(m)}$ not in $\mfT$ corresponds to a unique $m$-thin rectangle $r\in\mcT^{(m)}$ such that $g(\partial r) = g^u_b$, and thus $|\log g^u_b|\leq C2^{-\alpha m/2}$ by~\eqref{eq:hol_r_bound}.
\end{proof}

\begin{remark}\label{rem:periodic}
We see here a simplification of working on $[0,1]^2$ vs. $\T^2$ (i.e. with free vs. periodic boundary conditions):
for $\T^2$, Lemma~\ref{lem:axial} is not true due to global holonomies (though a version of it is true if the structure group is simply connected, see~\cite[Prop.~4.15]{Chevyrev19YM} and~\cite[Lem.~9.7]{CS23}).
In fact, our main result, Theorem~\ref{thm:moments_YM}, is \textit{not} true for $\T^2$;
this is because the measure~\eqref{eq:disc_YMH} in the continuum disintegrates over the isomorphism classes of $U(1)$-principal bundles
and realisations of the gauge field from non-trivial bundles cannot be represented by a global $1$-form.
To recover a version of our main result for $\T^2$, one could either adjust the measure~\eqref{eq:disc_YMH} to remain on the trivial bundle (see~\cite[Sec.~2.3]{Levy06} where this is done for the pure Yang--Mills model)
or restrict to bounds on $\log \g^\mbu$ in simply connected domains.
\end{remark}

\subsection{Proof of Theorem~\ref{thm:gauge_fix}}

\begin{proof}[of Theorem~\ref{thm:gauge_fix}]
Consider the smallest integer $m\geq 4$ such that $2^m > (\frac{8}{\pi}[g]_\alpha)^{2/\alpha}$.
By assumption, $m \leq N$.
For this choice of $m$, we readily see that
\begin{equ}{}
[g]_\alpha 2^{-(m+1)\alpha/2}<\pi\;,\quad [g]_\alpha 2^{-(m+1)\alpha}<\frac\pi8\;, \quad
[g]_\alpha 2^{-\alpha m/2}<\frac\pi4\;.
\end{equ}
Moreover, note that~\eqref{eq:hol_r_bound} holds with $C=[g]_\alpha$ by definition~\eqref{eq:[g]_def}.
Thus, by Lemma~\ref{lem:axial},~\eqref{eq:dyadic_bonds_bound}
holds with $c=\frac\pi8$ and with $g$ replaced by $g^u$
for some $u\in\mfG$.
By Lemma~\ref{lem:Landau}\ref{pt:gr_bound}, there exists $u\in\mfG$
such that for all $\beta\in[0,1]$ and $\kappa>0$
\begin{equ}\label{eq:gr_bar_alpha_bound}
|\log g^u|_{\gr{\beta}}
\lesssim
2^m + (1-2^{\kappa})^{-1}2^{-m\kappa}[g]_{\beta+\kappa} \;.
\end{equ}
Furthermore, Lemma~\ref{lem:Landau}\ref{pt:bonds_bound}
implies that $\max_{b\in\bonds^{(N)}}|\log g^u_b| < \frac\pi4$,
from which it follows by the discrete Stokes' theorem that for every $r\in\mcR$
\begin{equ}\label{eq:r_plaq_decomp}
\sum_{b \in \partial r} \log g^u_b
=
\sum_{p\in\mathbf{P}\cap r} \log g(\partial p)\;. 
\end{equ}
Recall that for parallel $\ell,\bar\ell\in\mcX$, we have
$\rho(\ell,\bar\ell)^2=|r|$ for the $r\in\mcR$
such that two sides of $r$ are $\ell,\bar\ell$, and the other two sides have length $d(\ell,\bar\ell)$.
By considering the cases $d(\ell,\bar\ell) \leq |\ell|$
and $d(\ell,\bar\ell)>|\ell|$,
it readily follows from~\eqref{eq:gr_bar_alpha_bound} 
and~\eqref{eq:r_plaq_decomp} that
\begin{equ}
|\log g^u|_{\rnorm{\beta}}
\lesssim
2^m + (1-2^{\kappa})^{-1}2^{-m\kappa}[g]_{\beta+\kappa}\;.
\end{equ}
\end{proof}

\section{Moment bounds on the gauge field marginal}
\label{sec:moments}

In this section we combine the results of Sections~\ref{sec:diamagnetic} and~\ref{sec:gauge_fix} to prove  Theorem~\ref{thm:moments_YM}.
We again fix $N\geq 1$ and suppress it from our notation.
We begin with two lemmas, the first of which
is purely combinatorial.

\begin{lemma}\label{lem:rect_decomp}
For every $r\in\mcR$ there exists a subset of thin rectangles $\mft \subset \cup_{n=1}^N\mcT^{(n)}$
such that $\mft$ is a partition of $r$ and such 
that $\sum_{t\in \mft} |t|^\alpha \lesssim_\alpha |r|^\alpha$
for all $\alpha \in (0,1]$.
\end{lemma}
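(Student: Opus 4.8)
The plan is to decompose $r$ dyadically, peeling off thin rectangles scale by scale. Write $r = [0,a]\times[0,b]$ (after translation) with $a = 2^{-N}\mathbb{N}$-valued side lengths, and without loss of generality assume $a \geq b$. The idea is an induction on the "complexity" of $r$: at each step we split off one or two thin rectangles belonging to the coarsest dyadic scale that fits inside $r$, and recurse on the remainder, which is a rectangle whose sides are strictly shorter (in a suitable sense), so the procedure terminates after finitely many steps. Concretely, let $n$ be the smallest integer with $2^{-n} \leq \min(a,b) = b$; then along the short direction we can fit a strip of width $2^{-n}$, which (after subdividing along the long direction into pieces of length $\leq 2^{-n}\cdot 2^n \cdot 2^{-n}$... ) can be cut into elements of $\mcT^{(n)}$. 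One convenient formalisation: reduce first to the case where $a$ and $b$ are both integer multiples of $2^{-N}$, then repeatedly apply the binary-expansion decomposition in each coordinate.

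\textbf{Key steps in order.} First I would reduce to squares-of-dyadic-side-length building blocks: writing $b = \sum_{j} 2^{-n_j}$ with $n_1 < n_2 < \cdots$ its binary expansion (finitely many terms since $b\in 2^{-N}\Z$), partition the strip $[0,a]\times[0,b]$ into horizontal sub-strips $[0,a]\times I_j$ with $|I_j| = 2^{-n_j}$. Second, for each sub-strip of height $2^{-n_j}$, cut it along the long direction into rectangles of dimensions $2^{-n_j}\times(k_j 2^{-n_j})$ with $1\le k_j\le 2^{n_j}$ — i.e. write $a$ in base $2^{-n_j}$ and group, so that each such rectangle is genuinely in $\mcT^{(n_j)}$ (its long side is a multiple of $2^{-n_j}$ and at most $2^{-n_j}\cdot 2^{n_j} = 1$, and its corners lie in $\Lambda^{(n_j)}$ since both side lengths are multiples of $2^{-n_j}$ and, after arranging the partition greedily from one corner, so are the offsets). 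Third, estimate $\sum_{t\in\mft}|t|^\alpha$: the sub-strip of height $2^{-n_j}$ contributes a total area $a\cdot 2^{-n_j}$ split into $O(a 2^{n_j})$ pieces each of area $\leq 2^{-2n_j}\cdot 2^{n_j}=2^{-n_j}$, so $\sum_{t\subset\text{strip }j}|t|^\alpha \lesssim (a2^{n_j})\cdot (2^{-n_j})^\alpha + (\text{one leftover piece})$; more carefully one takes the pieces to be as large as possible (side $2^{-n_j}\cdot 2^{m}$ for the largest dyadic $2^m$ fitting), getting a geometric sum $\sum_{m} (2^{-n_j}\cdot 2^{-n_j+m})^\alpha \lesssim_\alpha (2^{-n_j}\cdot a)^\alpha \le (2^{-n_j})^{\alpha/2}|r|^{\alpha/2}$ type bounds. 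Summing the geometric series over $j$ (the $2^{-n_j}$ decrease geometrically-ish, or at least are distinct powers of two bounded by $b$) yields $\sum_{t\in\mft}|t|^\alpha \lesssim_\alpha a^\alpha b^\alpha \cdot(\text{bounded sum}) \lesssim_\alpha |r|^\alpha$, using $|r|=ab$ and $\alpha\le 1$ to sum $\sum_j (2^{-n_j})^\alpha \lesssim_\alpha b^\alpha$.

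\textbf{Main obstacle.} The genuinely fiddly point is the bookkeeping that makes every produced rectangle legitimately lie in some $\mcT^{(n)}$ — that is, corners in $\Lambda^{(n)}$ \emph{and} the thinness constraint (one side exactly $2^{-n}$, the other a multiple $k2^{-n}$ with $k\le 2^n$). Getting the corners into $\Lambda^{(n)}$ forces one to be careful that the partition of the long direction starts at a point that is a multiple of $2^{-n}$; this is automatic if one always cuts greedily from the lower-left corner of the current remainder and one has already arranged (via the binary expansion of $b$) that the strip boundaries are at dyadic heights. The constant-tracking in $\sum |t|^\alpha \lesssim_\alpha |r|^\alpha$ is then a routine geometric-series estimate, with the only subtlety being that the implied constant blows up as $\alpha\to 0$ (through a factor $(1-2^{-\alpha})^{-1}$), which is allowed since the claim only asserts $\lesssim_\alpha$. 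I would present the decomposition algorithm explicitly for general $r$ and then do the two estimates (number of pieces per scale; geometric sum over scales) in one short paragraph each.
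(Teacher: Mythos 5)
Your overall strategy—take the product of dyadic decompositions of the two coordinate intervals and then sum a geometric series—is the same as the paper's, and your final estimate (distinct powers of two in each coordinate, hence $\sum_j 2^{-\alpha n_j}\lesssim_\alpha b^\alpha$, etc.) is fine. But there is one genuine gap, and it sits exactly at the point where the lemma is nontrivial: the reduction ``write $r=[0,a]\times[0,b]$ after translation'' is not legitimate. Membership in $\mcT^{(n)}$ requires the corners to lie on the \emph{fixed} lattice $\Lambda^{(n)}=\{k2^{-n}\}^2$, and this condition is not translation-invariant. If the lower-left corner of $r$ is, say, $(2^{-N},2^{-N})$ and $r$ has side length $\tfrac12$, then the piece your algorithm produces at the coarsest scale is a translate by $2^{-N}$ of an element of $\mcT^{(1)}$, not an element of $\mcT^{(1)}$. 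Your ``Main obstacle'' paragraph asserts that corner alignment is ``automatic'' after cutting greedily from the lower-left corner; that is true only in the translated frame (equivalently, only when the corner of $r$ already lies in $\Lambda^{(n_1)}$ for the coarsest scale $n_1$ you use), which begs the question. The same problem infects the strip boundaries: the partial sums of the binary expansion of $b$ are dyadic heights only after the translation.

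The paper's fix is to anchor the one-dimensional decomposition not at an endpoint but at the unique coarsest lattice point inside the interval: for $[a,b]$ with $b-a\le\tfrac12$ one takes the smallest $n$ with $z=k2^{-n}\in[a,b]$ and builds the dyadic partition $a=h_{-u}<\dots<h_0=z<\dots<h_v=b$ outward from $z$ in both directions, each gap being the largest power of $2$ that fits. Then every division point is automatically a multiple of the scale of the adjacent gaps (so the product rectangles have corners in the right $\Lambda^{(n)}$), and every power of $2$ occurs at most twice per coordinate, which is what feeds the geometric-series bound $\sum_{m\ge0}(m+1)\,|\bar t|^{\alpha}2^{-\alpha m}\lesssim_\alpha|r|^\alpha$. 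If you replace your left-anchored binary expansions by this two-sided, lattice-anchored decomposition (and drop the translation), the rest of your argument goes through.
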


\begin{proof}
It suffices to consider $r\in\mcR$ with side-lengths at most $\frac12$.
For an interval $[a,b] \subset [0,1]$ with $b-a\leq \frac12$
and $a,b\in 2^{-N}\Z$, consider the smallest $0 \leq n\leq N$
such that $z\eqdef k2^{-n}\in [a,b]$ for some $0 \leq k \leq 2^n$
(note that $k$ is unique).

Consider now the dyadic decomposition $a=h_{-u}<\ldots <h_{-1} < h_0 = z < h_1 \ldots<h_v=b$, 
where $h_{i}-h_{i-1}$ for $i\leq 0$
(resp. $i>0$) is the largest power of $2$ which fits into $[a,h_i]$
(resp. $[h_{i-1},b]$).
Note that every power of $2$ appears at most twice in the sequence $\{h_i-h_{i-1}\}_{-u<i\leq v}$.

Suppose $r$ has horizontal coordinates $[a,b]$ and vertical coordinates $[\bar a,\bar b]$.
Denoting by $h_i$ and $\bar h_j$ the respective decompositions of the two intervals, we note that every pair of subintervals $[h_{i-1},h_{i}], [\bar h_{j-1},\bar h_j]$ forms a thin rectangle.
We let $\mft \subset \cup_{n=1}^N\mcT^{(n)}$ denote the collection of these thin rectangles.
Then $\mft$ is a partition $r$.
Let $\bar t \in \mft$ such that $|\bar t|=\max_{t\in \mft} |t|$.
There are at most $4$ elements $t\in \mft$
of any given dimensions, thus the number of $t\in \mft$
with area $|\bar t| 2^{-m}$ is at most $4(m+1)$ for any $m\geq 0$.
It follows that
$\sum_{t\in \mft}|t|^\alpha \leq \sum_{m=0}^\infty (4m+1) |\bar t|^\alpha 2^{-\alpha m} \lesssim_\alpha |r|^\alpha$.
\end{proof}
The following lemma is a Kolmogorov-type bound on $[\g]_\alpha$.
\begin{lemma}\label{lem:Kolm_bound}
Let $\alpha\in [0,1)$ and $q > 0$.
Then 
\begin{equ}\label{eq:all_rect_bound}
\E\big[
[\g]_\alpha^{2q}
\big]
\lesssim_{\alpha,q} 1\;.
\end{equ}
\end{lemma}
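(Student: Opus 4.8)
The plan is to bound $[\g]_\alpha$ by first reducing the supremum over all rectangles $r\in\mcR$ to a supremum over thin rectangles only, and then controlling that reduced supremum by a crude union bound fed by the moment estimate of Corollary~\ref{cor:moment_bound_plaq_sum}. A naive union bound directly over $\mcR$ cannot close, since $\#\mcR\asymp 2^{4N}$ while the sharpest available pointwise control on $|\sum_{p\in\mcP\cap r}\log\g(\partial p)|$ only gives $2q$-th moments of size $\lesssim_q(q^2|r|)^q$; the gain will come from the fact that $\bigcup_{n=1}^N\mcT^{(n)}$ contains only $\asymp 2^{3N}$ rectangles, which — together with the area weight $|t|^{q(1-\alpha)}$ — is affordable once $q$ is large.

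First I would dispose of $\alpha=0$: since $|r|\le 1$ for all $r\in\mcR$, one has $[\g]_0\le[\g]_{1/2}$ pointwise, so it suffices to treat $\alpha\in(0,1)$. Then I would set $T_\alpha\eqdef\sup_t |t|^{-\alpha/2}\big|\sum_{p\in\mcP\cap t}\log\g(\partial p)\big|$, the supremum over all thin rectangles $t\in\bigcup_{n=1}^N\mcT^{(n)}$, and show $[\g]_\alpha\lesssim_\alpha T_\alpha$. For this, given $r\in\mcR$ I would apply Lemma~\ref{lem:rect_decomp} with exponent $\alpha/2\in(0,1]$ to obtain a partition $\mft$ of $r$ into thin rectangles with $\sum_{t\in\mft}|t|^{\alpha/2}\lesssim_\alpha|r|^{\alpha/2}$; since the plaquettes of $r$ are the disjoint union of those of the $t\in\mft$, the triangle inequality gives $\big|\sum_{p\in\mcP\cap r}\log\g(\partial p)\big|\le\sum_{t\in\mft}\big|\sum_{p\in\mcP\cap t}\log\g(\partial p)\big|\le T_\alpha\sum_{t\in\mft}|t|^{\alpha/2}\lesssim_\alpha T_\alpha|r|^{\alpha/2}$, and taking the supremum over $r$ completes this step.

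It then remains to bound $\E[T_\alpha^{2q}]$. For a thin rectangle $t$, the loop $\partial t$ has winding number $1$ about each of the $|t|2^{2N}$ plaquettes it encloses and $0$ about the rest, so $\omega(\partial t)=|t|$ and $\sum_p(\partial t)(p)\log\g(\partial p)=\sum_{p\in\mcP\cap t}\log\g(\partial p)$; Corollary~\ref{cor:moment_bound_plaq_sum} with exponent $2q$ therefore gives $\E\big[\big|\sum_{p\in\mcP\cap t}\log\g(\partial p)\big|^{2q}\big]\le(2Cq)^{2q}|t|^q$. Bounding the supremum by a sum, $\E[T_\alpha^{2q}]\le(2Cq)^{2q}\sum_{n=1}^N\sum_{t\in\mcT^{(n)}}|t|^{q(1-\alpha)}$. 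For fixed $n$ and $1\le k\le 2^n$ there are at most $2\cdot 2^{2n}$ rectangles in $\mcT^{(n)}$ of area $k2^{-2n}$, so, using $\sum_{k=1}^{2^n}k^{q(1-\alpha)}\lesssim_{q,\alpha}2^{n(q(1-\alpha)+1)}$, one gets $\sum_{t\in\mcT^{(n)}}|t|^{q(1-\alpha)}\lesssim_{q,\alpha}2^{3n}2^{-nq(1-\alpha)}$; summing the geometric series over $n$ shows this is $\lesssim_{q,\alpha}1$ uniformly in $N$ whenever $q>3/(1-\alpha)$. Hence $\E[[\g]_\alpha^{2q}]\lesssim_{q,\alpha}1$ for all such $q$, and the bound for arbitrary $q>0$ follows by choosing $q'>\max(q,3/(1-\alpha))$ and applying Jensen's inequality, $\E[[\g]_\alpha^{2q}]=\E\big[([\g]_\alpha^{2q'})^{q/q'}\big]\le\E[[\g]_\alpha^{2q'}]^{q/q'}$, the law of $\g$ being a probability measure.

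The main obstacle is precisely this entropy-versus-moment balance: one must not sum over all of $\mcR$, which is why the decomposition into thin rectangles (Lemma~\ref{lem:rect_decomp}) is essential, and even over thin rectangles the crude union bound only closes for $q$ large, the general case being recovered afterwards by interpolation. Everything else — identifying $\omega(\partial t)=|t|$, the counting of $\mcT^{(n)}$, and the elementary sum $\sum_k k^{q(1-\alpha)}$ — is routine.
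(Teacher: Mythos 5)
Your proof is correct and follows essentially the same route as the paper's: reduce the supremum over all rectangles to thin rectangles via Lemma~\ref{lem:rect_decomp}, apply a union bound over $\bigcup_n\mcT^{(n)}$ fed by Corollary~\ref{cor:moment_bound_plaq_sum} with $\omega(\partial t)=|t|$, and recover small $q$ by Jensen. Your entropy count (threshold $q>3/(1-\alpha)$, from $2^{2n}$ corners times $2^n$ lengths) is in fact the more careful version of the paper's, and your explicit treatment of $\alpha=0$ and of the triangle-inequality reduction are details the paper leaves implicit.
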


\begin{proof}
It suffices to consider $q>\frac2{1-\alpha}$.
It holds that
\begin{equs}
\E\Big[
\sup_{1\leq n \leq N} \sup_{r \in \mcT^{(n)}}
\Big| \frac{\sum_{p\in\mathbf{P}\cap r} \log \g(\partial p)}{|r|^{\alpha /2}}\Big|^{2q}
\Big]
&\lesssim_{q} \sum_{n=1}^N \sum_{x\in\Lambda^{(n)}}
\sum_{k=1}^{2^n} (k2^{-2n})^{q(1-\alpha)}
\\
&\leq 4\sum_{n=0}^\infty 2^{n(2-q(1-\alpha))} \lesssim_{\alpha,q} 1\;,
\end{equs}
where in the first bound we used Corollary~\ref{cor:moment_bound_plaq_sum} with $\ell=\partial r$, and in the second bound we used $|\Lambda^{(n)}| \leq 2^{2n+2}$.
By Lemma~\ref{lem:rect_decomp}, we can replace $\sup_{1\leq n \leq N} \sup_{r \in \mcT^{(n)}}$ by $\sup_{r\in\mcR}$ on the left-hand side, which completes the proof.
\end{proof}

\begin{proof}[of Theorem~\ref{thm:moments_YM}]
If $N \leq 3$, then $|\log g|_{\beta} \lesssim 1$ for all $g\in\mcG$ by Remark~\ref{rem:trivial_bounds},
so we suppose $N>3$.
Let us fix $\alpha\in(0,1)$.
By Lemma~\ref{lem:Kolm_bound}, $\E[[\g]_\alpha^q]\lesssim_{\alpha,q} 1$ for all $q>0$.
By Theorem~\ref{thm:gauge_fix},
on the event $2^N>(\frac{8}{\pi}[\g]_\alpha)^{2/\alpha}$,
there exists $\mbu\in\mfG$ such that
for all $\beta\in[0,1]$ and $\kappa>0$
\begin{equ}
|\log \g^\mbu|_{\beta} \lesssim_{\alpha,\kappa} 1+[\g]_\alpha^{2/\alpha} + [\g]_{\beta+\kappa}\;.
\end{equ}
On the event $(\frac{8}{\pi}[\g]_\alpha)^{2/\alpha}\geq 2^N$,
we set $\mbu\equiv 1\in\mcG$, and note that 
$|\log \g^\mbu|_{\beta} \leq 2\pi 2^{N(1+\beta/2)}$
by Remark~\ref{rem:trivial_bounds}.
Hence, for all $q,r>0$,
\begin{equation}\label{eq:log_g_u_bound}
\begin{split}
\E
\big[
|\log \g^{\mbu}|_{\beta}^q
\big] 
&\lesssim_{\alpha,\kappa,q}
\E
\big[
1+[\g]_\alpha^{2q/\alpha} + [\g]_{\beta+\kappa}^q
\big]
\\
&\qquad\qquad + 2^{Nq(1+\beta/2)}
\P
\big[
(16\pi^{-1}[\g]_\alpha)^{2/\alpha}\geq 2^N
\big]
\\
&\lesssim_{\alpha,\kappa,q,r}
\E
\big[
1+[\g]_{\beta+\kappa}^{q}
\big]
+ 2^{Nq(1+\beta/2)}2^{-N r}
\E
\big[
[\g]_\alpha^{2r/\alpha}
\big]
\;.
\end{split}
\end{equation}
Choosing $r = q(1+\beta/2)$ and $\kappa>0$ such that $\beta+\kappa<1$ yields the desired estimate $\E[|\log \g^{\mbu}|_{\beta}^q] \lesssim 1$.

To conclude, we remark that $\mbu$ is measurable with respect to $\g$ since both the Landau gauge from Section~\ref{subsec:Landau} and axial gauge from the proof of Lemma~\ref{lem:axial},
which are used in the proof of Theorem~\ref{thm:gauge_fix},
are measurable with respect to the underlying gauge field.
\end{proof}

\appendix

\endappendix

\noindent\textbf{Acknowledgements:} The authors thank the anonymous referee for their helpful comments and suggestions.
I.C. acknowledges support from the EPSRC via the New Investigator Award EP/X015688/1.

\medskip

\noindent\textbf{Statement on competing interests:} The authors declare they have no competing interests. 
\bibliographystyle{./Martin}
\bibliography{./refs}

\end{document}